\newcommand{\newword}[1]{\textbf{\emph{#1}}}
\DeclareMathOperator{\cl}{cl}
\DeclareMathOperator{\Cone}{Cone}
\newcommand{\bfa}{\mathbf{a}}
\newcommand{\bfg}{\mathbf{g}}
\newcommand{\bfd}{\mathbf{d}}
\newcommand{\cA}{\mathcal{A}}
\newcommand{\cF}{\mathcal{F}}
\newcommand{\cD}{\mathcal{D}}
\newcommand{\into}{\hookrightarrow}
\newcommand{\kk}{\Bbbk}
\newcommand{\ol}[1]{\overline{#1}}
\newcommand{\onto}{\twoheadrightarrow}
\newcommand{\rep}{\operatorname{rep}}
\newcommand{\ZZ}{\mathbb{Z}}
\newcommand{\congto}{\xrightarrow{\sim}}
\newcommand{\wh}{\widehat}
\NewDocumentCommand{\dbc}{ m O{#1} }{G^{#1}}
\newcommand{\muoc}{s_1cs_1}
\newcommand{\muoci}{s_1c^{-1}s_1}
\newcommand{\munc}{s_ncs_n}
\newcommand{\munci}{s_nc^{-1}s_n}
\newtheorem{theorem}{Theorem}[section]
\newtheorem{corollary}[theorem]{Corollary}
\newtheorem{lemma}[theorem]{Lemma}
\newtheorem{proposition}[theorem]{Proposition}
\theoremstyle{definition}
\newtheorem{remark}[theorem]{Remark}
\numberwithin{equation}{section}
\numberwithin{figure}{section}
\begin{document}
\title{Affine cluster monomials are generalized minors}

\author[Dylan Rupel]{Dylan Rupel}
\address[Dylan Rupel]{ University of Notre Dame, Department of Mathematics, Notre Dame, IN 46556, USA}
\email{drupel@nd.edu}

\author[Stella]{Salvatore Stella}
\address[Salvatore Stella]{University of Haifa, Departments of Mathematics, Haifa, Mount Carmel, 31905, Israel}
\email{stella@math.haifa.ac.il}

\author[Harold Williams]{Harold Williams}
\address[Harold Williams]{University of California \\ Davis CA, USA}
\email{hwilliams@math.ucdavis.edu}

\begin{abstract}
  We study the realization of acyclic cluster algebras as coordinate rings of Coxeter double Bruhat cells in Kac-Moody groups.
  We prove that all cluster monomials with $\bfg$-vector lying in the doubled Cambrian fan are restrictions of principal generalized minors.
  As a corollary, cluster algebras of finite and affine type admit a complete and non-recursive description via (ind-)algebraic group representations, in a way similar in spirit to the Caldero-Chapoton description via quiver representations.
  In type $A_1^{(1)}\!\!$, we further show that elements of several canonical bases (generic, triangular, and theta) which complete the partial basis of cluster monomials are composed entirely of restrictions of minors.
  The discrepancy among these bases is accounted for by continuous parameters appearing in the classification of irreducible level-zero representations of affine Lie groups.
  We discuss how our results illuminate certain parallels between the classification of representations of finite-dimensional algebras and of integrable weight representations of Kac-Moody algebras.
\end{abstract}

\setcounter{tocdepth}{1}

\maketitle

\tableofcontents

\section{Introduction}
\thispagestyle{empty}
Let $\wh{G}$ be a symmetrizable Kac-Moody group and $c$ a Coxeter element of its Weyl group.
When~$\wh{G}$ is of symmetric type, Coxeter elements are in correspondence with orientations of its Dynkin diagram.
More generally, they are in correspondence with skew-symmetrizable matrices which coincide, up to signs, with the Cartan matrix of~$\wh{G}$ away from the diagonal.
Thus we may associate to~$c$ a cluster algebra, a recursively defined commutative ring equipped with a canonical partial basis whose elements are called cluster monomials~\cite{FZ02}.
The cluster algebras that appear in this way are said to be acyclic.

Adjoining suitable frozen variables, which we call doubled principal coefficients, this acyclic cluster algebra $\cA_{dp}(c)$ can be realized concretely in terms of the group $\wh{G}$: it is the coordinate ring of the Coxeter double Bruhat cell
\[
  G^{c,c^{-1}} := B_+ \dot{c} B_+ \cap B_- \dot{c}^{-1} B_-
\]
in the derived subgroup~$G\subset\wh{G}$ \cite{BFZ05, Wil13a}.
These varieties generalize the space of tridiagonal matrices with unit determinant and non-zero sub- and superdiagonal entries, which is recovered in the $SL_n$ case when~$c$ is the standard Coxeter element.

The cluster monomials of any cluster algebra are labeled by $\bfg$-vectors, a subset of elements of an integer lattice.
In the acyclic setting, this lattice can be naturally identified with the weight lattice of the relevant group: we identify the $\bfg$-vectors of the initial cluster variables with the fundamental weights.
When we realize $\cA_{dp}(c)$ as $\kk[G^{c,c^{-1}}]$, this labeling of the initial cluster variables acquires a Lie-theoretic meaning: they are the restrictions to $G^{c,c^{-1}}$ of the principal generalized minors of fundamental weights.

Recall that a principal generalized minor is a function on $G$ or $\wh{G}$ of the following form (we generalize slightly from \cite{FZ99}).
Fix a weight representation $V$ with an extremal weight $\lambda$, and choose an extremal vector~$v_\lambda\in V_\lambda$ together with a projection $\pi_\lambda: V \onto  \kk v_\lambda$ factoring through the weight projection onto $V_\lambda$.
Then the minor~$\Delta_{V,\lambda}$ is the function whose value at~$g$ is the ratio~$\pi_\lambda(gv_\lambda)/v_\lambda$.
When $\lambda$ is in the Tits cone, for example if it is fundamental, there is a canonical choice of $V$ given by the irreducible highest-weight representation whose highest weight is conjugate to~$\lambda$ under the Weyl group~$W$.
Since here, and in most cases of interest, $\Delta_{V,\lambda}$ is independent of $v_\lambda$ and $\pi_\lambda$, we suppress them from the notation.

As indicated above, the initial cluster variables on $G^{c,c^{-1}}$ coincide by definition with the restrictions of the fundamental principal minors.
More generally, the coordinate ring of any double Bruhat cell possesses a cluster structure in which finitely many cluster variables will by construction coincide with (possibly non-principal) minors \cite{BFZ05, Wil13a}.
The purpose of this paper is to show that the role of generalized minors in cluster theory is in fact much deeper.
Indeed, in finite and affine types we show that they provide a \emph{complete, non-recursive} description of cluster monomials in terms of the representation theory of (ind-)algebraic groups.

\begin{theorem}
  \label{thm:mainthm1}
  Let~$\wh{G}$ be of finite or affine type, and let $x_{\lambda;c} \in \cA_{dp}(c)$ be the cluster monomial of $\bfg\text{-vector}$~$\lambda$.
  Let $V$ be any weight representation of $G$ for which $\lambda$ is extremal, and let~$\Delta_{V,\lambda}$ be the principal generalized minor defined by some choice of $v_\lambda$, $\pi_\lambda$.
  Then the isomorphism~${\cA_{dp}(c) \cong \kk[G^{c,c^{-1}}]}$ identifies the cluster monomial~$x_{\lambda;c}$ with the restriction of the minor~$\Delta_{V,\lambda}$.
\end{theorem}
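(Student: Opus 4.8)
The plan is to avoid the recursion in $\cA_{dp}(c)$ and instead identify $x_{\lambda;c}$ with $\Delta_{V,\lambda}$ via a grading argument: both will be exhibited as homogeneous elements of the same one‑dimensional graded piece of $\kk[G^{c,c^{-1}}]$, after which a normalization computation forces them to coincide. The group $G^{c,c^{-1}}$ carries the $T\times T$‑action by left and right multiplication, and for a weight representation $V$ of $G$ with extremal weight $\lambda$ and any admissible $v_\lambda,\pi_\lambda$, working with the weight decomposition of $V$ gives $\Delta_{V,\lambda}(t_1 g t_2)=\lambda(t_1)\lambda(t_2)\Delta_{V,\lambda}(g)$. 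Since the rays of the doubled Cambrian fan are $W$‑images of fundamental weights, $\lambda$ lies in the Tits cone, so $\Delta_{V,\lambda}$ is a matrix coefficient of $V$, regular on $G$; its restriction to $G^{c,c^{-1}}$ is thus an element of $\kk[G^{c,c^{-1}}]=\cA_{dp}(c)$, homogeneous of right‑$T$‑weight $\lambda$, depending on the auxiliary data only through this weight and an overall scalar. On the cluster side, adjoining doubled principal coefficients makes $\cA_{dp}(c)$ graded by the weight lattice, with $x_{\lambda;c}$ homogeneous of degree equal to its $\bfg$‑vector — the exchange relations being homogeneous once the $\wh y$‑variables are given degree zero. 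The first step is to verify that under $\cA_{dp}(c)\cong\kk[G^{c,c^{-1}}]$ the $\bfg$‑vector grading coincides with the right‑$T$‑weight grading; since a grading on a cluster algebra is determined by the initial seed, this is a direct computation of torus weights there, where the mutable cluster variables are the fundamental principal minors $\Delta_{\varpi_i}$ of weight $\varpi_i$ and the frozen variables are the generalized minors implementing the doubled coefficients.

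Granting the grading match, both $x_{\lambda;c}$ and $\Delta_{V,\lambda}\vert_{G^{c,c^{-1}}}$ lie in the graded summand $\cA_{dp}(c)_\lambda$, and the heart of the argument is the claim that $\dim_\kk\cA_{dp}(c)_\lambda=1$ for every $\lambda$ in the doubled Cambrian fan. I would extract this from a basis of $\cA_{dp}(c)$ whose members are labelled by $\bfg$‑vectors and have separation‑of‑additions form $x^m F_m(\wh y)$ with $F_m(0)=1$ — the generic basis or the theta basis, both available in finite and affine type. Each such basis element is homogeneous of degree exactly $m$, so $\cA_{dp}(c)_\lambda$ is spanned by the single element indexed by $\lambda$; and the defining property of the doubled Cambrian fan is that over it this element is the cluster monomial $x_{\lambda;c}$. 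In finite type this is merely the statement that cluster monomials form a basis, but in affine type it is the genuine structural input that the only basis elements with $\bfg$‑vector in the doubled Cambrian fan are cluster monomials, the remaining ones sitting outside it. Establishing this affine statement — equivalently, controlling the wall‑and‑chamber structure along the two Cambrian halves making up the doubled fan — is the step I expect to be the main obstacle. (A purely geometric alternative, bounding $\dim\cA_{dp}(c)_\lambda$ via the stratification of $G^{c,c^{-1}}$ by its lower double Bruhat subcells, seems less clean, as does a recursive route matching a generalized‑minor $T$‑system with seed mutations.)

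It remains to pin down the scalar. From $\cA_{dp}(c)_\lambda=\kk\,x_{\lambda;c}$ we obtain $\Delta_{V,\lambda}\vert_{G^{c,c^{-1}}}=\xi\,x_{\lambda;c}$ for some $\xi\in\kk$, and I would compute $\xi$ by evaluating both sides along the subtorus of $G^{c,c^{-1}}$ defined by $\wh y=0$, on which the initial cluster variables restrict to the coordinate characters and each principal minor restricts to the corresponding character. There $x_{\lambda;c}$ restricts to $x^\lambda$ with coefficient $1$ (by separation of additions and $F_\lambda(0)=1$), while for a torus element $g$ one has $(gv_\lambda)_\lambda=\lambda(g)v_\lambda$, hence $\Delta_{V,\lambda}(g)=\lambda(g)$ independently of $v_\lambda$ and $\pi_\lambda$ — again $x^\lambda$ with coefficient $1$. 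Thus $\xi=1$, and in particular $\Delta_{V,\lambda}$ does not vanish on $G^{c,c^{-1}}$; this simultaneously yields the asserted independence of $V$, $v_\lambda$ and $\pi_\lambda$, since every admissible choice produces a nonzero element of the one‑dimensional space $\cA_{dp}(c)_\lambda$ with constant term $1$. Finally, for a general cluster monomial $x_{\lambda;c}=\prod_i x_{\lambda_i;c}^{a_i}$ with the $\lambda_i$ spanning a common cone of the doubled Cambrian fan, the sum $\lambda=\sum_i a_i\lambda_i$ again lies in the doubled Cambrian fan and the multiplicity‑one argument applies directly; alternatively one takes $V=\bigotimes_i V_i^{\otimes a_i}$, so that $v_\lambda=\bigotimes_i v_{\lambda_i}^{\otimes a_i}$ spans $V_\lambda$, $\pi_\lambda$ is the product projection, and $\Delta_{V,\lambda}=\prod_i\Delta_{V_i,\lambda_i}^{a_i}$ restricts to $\prod_i x_{\lambda_i;c}^{a_i}=x_{\lambda;c}$.
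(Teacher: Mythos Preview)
Your approach has a genuine gap: the central claim $\dim_\kk \cA_{dp}(c)_\lambda = 1$ is false. The algebra $\cA_{dp}(c)$ carries $2n$ invertible frozen variables $z_{i;c}$, $z_{\ol\imath;c}$, while the $\bfg$-vector grading (or any single $T$-weight grading) takes values in the rank-$n$ lattice $P$. The induced degree map $\ZZ^{2n}\to P$ on frozen Laurent monomials therefore has kernel of rank at least $n$, and for any nontrivial frozen monomial $M$ of degree zero the elements $x_{\lambda;c}$ and $M\cdot x_{\lambda;c}$ are distinct elements of the same graded piece. Concretely, already for $G=SL_2$ the functions $\Delta_{\omega_1}$ and $\Delta^{c\omega_1}_{\omega_1}$ both have right-$T$-weight $\omega_1$ on $\dbc{c}$ but are linearly independent. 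Passing to the full $T\times T$ bi-grading does not help either: since $\dim \dbc{c}=3n>2n=\dim(T\times T)$, the invariant subring $\kk[\dbc{c}]^{T\times T}$ has positive Krull dimension and every graded piece is a module over it. Your appeal to a basis ``labelled by $\bfg$-vectors'' is implicitly a statement about a basis over the frozen coefficient ring, not over $\kk$; to exploit it you would need to know in advance the frozen monomial appearing in the leading term of $\Delta_{V,\lambda}\big|_{\dbc{c}}$, which is essentially the content of the theorem. The normalization step inherits the same problem: restricting to $H\subset\dbc{c}$ (or taking a limit $\wh y\to 0$) collapses all elements of a given $H$-weight to the same character, so agreement there does not determine the scalar.

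There is a smaller issue as well: it is not true that every $\lambda\in\cD\cF_c$ lies in the Tits cone. Half of the doubled fan is $-\cF_{c^{-1}}$, which covers the \emph{opposite} Tits cone, and in affine type many cluster-variable $\bfg$-vectors live there. In any case, regularity of $\Delta_{V,\lambda}$ on $G$ depends on the representation $V$ (which the theorem allows to be arbitrary), not on $\lambda$ alone.

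The paper follows exactly the recursive route you set aside. It constructs explicit regular maps $\psi_{1;c}\colon\dbc{c}\to\dbc{\muoc}[\muoci]$ (geometrizing mutation at a source) and $\eta_{1;c}\colon\dbc{c}\to\dbc{s_1c}[c^{-1}s_1]_{\langle 1\rangle}$ (geometrizing the freezing of $x_{\omega_1;c}$), together with their analogues at the sink $n$, and verifies by direct computation that cluster monomials and principal minors transform compatibly under both pullbacks. The recursion defining $c$-sortability (either $s_1$ is initial in $w$, or $w\in W_{\langle 1\rangle}$) then drives a double induction on $\operatorname{rank}(G)$ and on the length of a sortable $w$ with $\lambda\in\Cone_c(w)$, reducing to the trivial base case $G=SL_2$, $w=e$. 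No multiplicity-one statement is ever invoked.
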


In the special case where~$x_{\lambda;c}$ is a cluster variable, some instances of the above result are known: when~$\wh{G}$ is a semisimple algebraic group it was shown in~\cite{YZ08}, and when~$\wh{G}$ is of type~$A_n^{(1)}$ or of finitely many other affine types it was shown in~\cite{RSW16}.
It was stated as a conjecture in~\cite{RSW16} that the claim holds for all cluster variables in $\cA_{dp}(c)$.

We note that the statement for cluster monomials does not follow trivially from the statement for cluster variables: deducing the former given the latter amounts to showing that the relevant minors satisfy certain relations when restricted to~$G^{c,c^{-1}}$\hspace{-4pt}, but these relations generally do not hold globally on~$\wh{G}$.
For example, consider~$\wh{G}=SL_3$ with $c=s_1s_2$.
The two cluster variables~$x_{\omega_1;c}$ and~$x_{-\omega_2;c}$ form a cluster in~$\cA_{dp}(c)$ and are the restrictions of the minors $\Delta_{\kk^3,\,\omega_1}$ and $\Delta_{\kk^3,-\omega_2}$, respectively.
Theorem~\ref{thm:mainthm1} asserts that the cluster monomial~$x_{\omega_1-\omega_2;c}=x_{\omega_1;c}x_{-\omega_2;c}$ is the restriction of the minor~$\Delta_{{\tiny \bigwedge\nolimits^{\!2}}\kk^3,\,\omega_1-\omega_2}$.
For a generic matrix $g=(g_{ij}) \in SL_3$, the evaluations $\Delta_{{\tiny \bigwedge\nolimits^{\!2}}\kk^3,\,\omega_1-\omega_2}(g) = g_{11}g_{33}-g_{13}g_{31}$ and~$\Delta_{\kk^3,\,\omega_1}(g)\Delta_{\kk^3,-\omega_2}(g) = g_{11} g_{33}$ are not equal, but they do coincide when~$g$ is tridiagonal.

Theorem~\ref{thm:mainthm1} is a special case of the following more general result.
\begin{theorem}
  \label{thm:mainthm2}
  Let~$\wh{G}$ be any symmetrizable Kac-Moody group, and let $x_{\lambda;c} \in \cA_{dp}(c)$ be a cluster monomial whose $\bfg\text{-vector}$~$\lambda$ is contained in the doubled $c$-Cambrian fan.
  Let $V$ be any weight representation of $G$ for which $\lambda$ is extremal, and let~$\Delta_{V,\lambda}$ be the principal generalized minor defined by some choice of $v_\lambda$, $\pi_\lambda$.
  Then the isomorphism~${\cA_{dp}(c) \cong \kk[G^{c,c^{-1}}]}$ identifies the cluster monomial~$x_{\lambda;c}$ with the restriction of the minor~$\Delta_{V,\lambda}$.
\end{theorem}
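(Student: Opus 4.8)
The plan is to reduce Theorem~\ref{thm:mainthm2} to the case in which $x_{\lambda;c}$ is a cluster variable, and then to prove that case by induction over the maximal cones of the doubled $c$-Cambrian fan. For the reduction, fix a maximal cone $\sigma$ of the doubled Cambrian fan containing $\lambda$, with associated seed having cluster variables $x_1,\dots,x_n$ and $\bfg$-vectors $g_1,\dots,g_n$; thus $\lambda=\sum_ia_ig_i$ with $a_i\in\NN$ and $x_{\lambda;c}=\prod_ix_i^{a_i}$. If $V_i$ is a weight representation in which $g_i$ is extremal, then the pure tensor $v_\lambda=\bigotimes_iv_{g_i}^{\otimes a_i}$ is an extremal vector of weight $\lambda$ in $V=\bigotimes_iV_i^{\otimes a_i}$: for each simple root $\alpha_j$ the vectors $v_{g_1},\dots,v_{g_n}$ are simultaneously annihilated either by $e_j$ or by $f_j$, a sign-coherence property of the $\bfg$-vectors of a single cluster. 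Since a matrix coefficient of a tensor product evaluated on a pure tensor is the product of the matrix coefficients, the associated product projection $\pi_\lambda=\bigotimes_i\pi_{g_i}^{\otimes a_i}$ gives $\Delta_{V,\lambda}=\prod_i\Delta_{V_i,g_i}^{a_i}$ on all of $\wh G$. Granting the cluster-variable case and the independence of $\Delta_{V,\lambda}|_{G^{c,c^{-1}}}$ from the auxiliary data $V,v_\lambda,\pi_\lambda$ (next paragraph), we conclude $x_{\lambda;c}=\prod_ix_i^{a_i}=\prod_i\Delta_{V_i,g_i}^{a_i}|_{G^{c,c^{-1}}}=\Delta_{V,\lambda}|_{G^{c,c^{-1}}}$.

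That independence, and a closed formula we use throughout, we obtain as follows. Suppose first $\lambda$ lies in the $c$-part of the doubled Cambrian fan, so $\lambda=w\ol\lambda$ with $\ol\lambda$ dominant and $w$ of minimal length; any admissible extremal vector then has the form $v_\lambda=\dot wv_{\ol\lambda}$ for a genuine highest weight vector $v_{\ol\lambda}$ of $V$ (reduce to $V$ irreducible). Write $gv_\lambda=\dot w\,(\dot w^{-1}g\dot w)\,v_{\ol\lambda}$; when $\dot w^{-1}g\dot w$ lies in the big cell $U_-TU_+$, say with Gaussian decomposition $u_-tu_+$, the top-weight component of $(\dot w^{-1}g\dot w)v_{\ol\lambda}$ is exactly $\ol\lambda(t)\,v_{\ol\lambda}$, so the weight-$\lambda$ component of $gv_\lambda$ is the scalar multiple $\ol\lambda(t)\,v_\lambda$ of $v_\lambda$ itself. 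Hence, writing $[h]_0$ for the torus factor of an element $h$ of the big cell, $\Delta_{V,\lambda}(g)=\ol\lambda\big([\dot w^{-1}g\dot w]_0\big)$ on the locus of $G^{c,c^{-1}}$ where $\dot w^{-1}g\dot w$ lies in the big cell --- a dense open subset --- manifestly independent of $V$, of $v_\lambda$, and of the complement used to build $\pi_\lambda$; by density and regularity this pins down $\Delta_{V,\lambda}|_{G^{c,c^{-1}}}$. This is precisely the mechanism behind the $SL_3$ example in the introduction: on tridiagonal matrices the a priori ambiguous weight-$\lambda$ component of $gv_\lambda$ is forced to be proportional to $v_\lambda$. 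For $\lambda$ in the $c^{-1}$-part one argues symmetrically with the opposite big cell $U_+TU_-$, or by swapping $(B_+,c)$ with $(B_-,c^{-1})$.

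It remains to treat a cluster variable $x_{\mu;c}$, which we do by induction on the distance from the initial cone in the graph whose vertices are the maximal cones of the doubled $c$-Cambrian fan and whose edges are facet adjacencies; this graph is connected and each edge is a single seed mutation. The base case is the statement --- built into the isomorphism $\cA_{dp}(c)\cong\kk[G^{c,c^{-1}}]$ of \cite{BFZ05,Wil13a} --- that the initial cluster variables and the doubled principal coefficients are the restrictions of the fundamental principal minors $\Delta_{\omega_i}$ and of the boundary minors realizing those coefficients. For the inductive step, assume the cluster variables $x_j$ of the current seed equal $\Delta_{V_j,g_j}|_{G^{c,c^{-1}}}$, and let $x_k'$ of $\bfg$-vector $\mu'$ be produced by mutating at $k$. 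Since $x_k'$ is the unique function satisfying the exchange relation $x_kx_k'=M_++M_-$, with $M_\pm$ the prescribed monomials in the $x_j$ and the frozen variables, it suffices to show that $\Delta_{V,\mu'}|_{G^{c,c^{-1}}}$ satisfies this relation; and by the closed formula this amounts to expressing $\ol{\mu'}\big([\dot w_{\mu'}^{-1}g\dot w_{\mu'}]_0\big)$ in terms of the analogous torus characters of the current seed, where $w_{\mu'}$ is the minimal-length Weyl group element with $\mu'=w_{\mu'}\ol{\mu'}$, its reduced word read off from the $c$-sortable word recording the mutation path.

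Carrying this out is the crux of the argument and its main technical obstacle. On all of $\wh G$ one has the generalized Plücker relations among generalized minors standard in the theory of double Bruhat cells \cite{FZ99}, but for a general mutation these do not restrict to the cluster exchange relation: for the Coxeter cell, certain monomial terms of the Plücker relation must vanish upon restriction to $G^{c,c^{-1}}$ --- exactly as $g_{11}g_{33}-g_{13}g_{31}$ collapses to $g_{11}g_{33}$ in the introductory example --- leaving precisely $M_\pm$, and this collapse must be verified at every Cambrian mutation. The plan is to read off these vanishings, and to compute the surviving torus characters, from the factorization of elements of $G^{c,c^{-1}}$ adapted to a reduced word for $c$, proceeding step by step along the reduced word of $w_{\mu'}$ so that each step reduces to an $SL_2$ calculation; the accumulated effect should reproduce $M_\pm$ and certify that the resulting minor has $\bfg$-vector $\mu'$, rather than merely solving a relation of the correct shape. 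The bulk of the work is then the bookkeeping that keeps the Cambrian mutation sequence, the Weyl group elements $w_{\mu'}$, and the evolving family of generalized minors and their representations synchronized, and that confirms the two halves of the doubled principal coefficients enter exactly where the exchange relation predicts; the $c^{-1}$-portion of the fan is handled by the $(B_+,c)\leftrightarrow(B_-,c^{-1})$ symmetry. Finally, Theorem~\ref{thm:mainthm1} is the special case in which $\wh G$ is of finite or affine type, since there every cluster monomial has $\bfg$-vector in the doubled $c$-Cambrian fan.
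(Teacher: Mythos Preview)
Your proposal has a genuine gap at precisely the point you flag as ``the crux of the argument and its main technical obstacle.'' You reduce the theorem to showing, for each Cambrian mutation, that the relevant Pl\"ucker-type identity on $\wh G$ collapses on $G^{c,c^{-1}}$ to the cluster exchange relation $x_kx_k'=M_++M_-$. You then describe a \emph{plan} (step along the reduced word of $w_{\mu'}$, reduce to $SL_2$ calculations, track how the doubled principal coefficients enter) but do not carry it out. This is not a detail: verifying that exactly the right monomial terms vanish on restriction, at \emph{every} Cambrian wall, in \emph{every} rank, with the correct frozen monomials, is the whole content of the theorem. The paper does not do this, and I am skeptical that a direct attack along these lines is tractable.

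The paper's proof sidesteps exchange relations entirely. The key idea is to construct, in Section~\ref{sec:cdbc}, two explicit maps between Coxeter double Bruhat cells: a biregular isomorphism $\psi_{1;c}:G^{c}\to G^{s_1cs_1}$ (geometrizing source mutation at $1$) and a dominant map $\eta_{1;c}:G^{c}\to G^{s_1c}_{\langle 1\rangle}$ to a corank-one subgroup (geometrizing freezing $x_{\omega_1;c}$). Propositions~\ref{prop:psi} and \ref{proposition:injection} show these maps transport cluster monomials correctly, and Propositions~\ref{prop:psiminors} and \ref{prop:minorrankinduction} show they transport principal minors correctly. The recursion defining $c$-sortable elements (either $s_1$ is initial in $w$, or $w\in W_{\langle 1\rangle}$) then gives a double induction on the length of $w$ and the rank of $G$, with the two cases handled by $\psi_{1;c}$ and $\eta_{1;c}$ respectively. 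No exchange relation is ever checked; the independence of $\Delta_{V,\lambda}$ from $V,v_\lambda,\pi_\lambda$ falls out as a corollary rather than being needed as input.

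Two smaller issues. First, your independence argument assumes every $\lambda\in\cF_c$ is $W$-conjugate to a dominant weight; but as noted in the introduction, outside finite type the doubled Cambrian fan is strictly larger than the union of the Tits cone and its opposite, so this fails for some $\lambda$ you must treat. Second, your reduction to cluster variables via tensor products, while correct (row sign-coherence of $\bfg$-vectors does give the extremality of the pure tensor), requires the independence statement as input, which you have not established in full generality; the paper instead inducts directly on cluster monomials, so that independence is an output.
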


The doubled $c$-Cambrian fan~\cite{RS15} is a (generally proper) subset of the weight lattice consisting of cones obtained by gluing regions in the Coxeter arrangement of~$W$ in accordance with certain lattice congruences given by~$c$.
This fan always covers the Tits cone and its opposite, but it is larger than the union of these except in finite type where the Tits cone already contains the entire weight lattice.
Moreover, when~$\wh{G}$ is of finite or affine type, the doubled $c$-Cambrian fan contains the~$\bfg$-vectors of all cluster monomials in~$\cA_{dp}(c)$.

When $\lambda$ is not conjugate to a dominant or antidominant weight, the classification of weight representations for which $\lambda$ is extremal is not well-understood in general.
In affine type, these are the representations of level zero.
The irreducible level-zero representations of~$\widehat{G}$ with finite-dimensional weight spaces were classified in \cite{Cha86, CP86, CP88}.
Unlike the highest-weight case, there is not a unique irreducible level-zero representation with a fixed extremal weight, but rather a continuous family.
In general type, universal extremal weight representations were constructed and studied by Kashiwara \cite{Kas94} in the quantum setting -- in affine type, the irreducible representations from \cite{Cha86, CP86, CP88} are quotients of these.
As a corollary of Theorem \ref{thm:mainthm2}, we see that when $\lambda$ lies in the doubled $c$-Cambrian fan, the minor $\Delta_{V,\lambda}$ is independent of the choice of extremal weight representation~$V$ and of the projection $\pi_\lambda$.

\subsection*{Canonical bases}
Outside of finite type, a cluster algebra is not spanned by its cluster monomials.
A major thread in the development of cluster theory has been the construction of canonical bases which extend the set of cluster monomials.
Examples include the generic basis of \cite{Dup12}, the triangular basis of \cite{BZ14}, and the theta basis of \cite{GHKK14}.
The first two of these bases are closely related, respectively, to the dual semicanonical and dual canonical bases of the coordinate ring of the positive unipotent subgroup $N_+ \subset G$.
In rank two, the third coincides with the greedy basis of \cite{SZ04, LLZ14} by \cite{CGMMRSW15}.

While all of these bases contain the set of cluster monomials, they generally do not coincide outside this subset.
However, the labeling of cluster monomials by $\bfg$-vectors extends to a labeling of any of these bases by the weight lattice.
\begin{theorem}
\label{thm:mainthm3}
  Let~$\wh{G} \cong \wh{LSL}_2$ be the affine Kac-Moody group of type $A_{1}^{(1)}$.
  Then the isomorphism~$\cA_{dp}(c) \cong \kk[G^{c,c^{-1}}]$ identifies all elements of the generic, triangular, and theta bases with restrictions of principal generalized minors $\Delta_{V,\lambda}$ for which $V$ is an irreducible weight representation of $\wh{LSL}_2$ with finite-dimensional weight spaces.
  That is, for each basis and for any weight $\lambda$, we can choose such a representation $V$ so that the restriction of $\Delta_{V,\lambda}$ coincides with the basis element labeled by $\lambda$.
\end{theorem}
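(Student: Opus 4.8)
The plan is to split each of the three bases into its cluster monomials, handled by Theorem~\ref{thm:mainthm2}, and its remaining ``imaginary'' elements, which we realize as restrictions of principal minors attached to level-zero loop modules. By Theorem~\ref{thm:mainthm2}, every cluster monomial of $\cA_{dp}(c)$ is the restriction of some $\Delta_{V,\lambda}$, and since the $\bfg$-vector $\lambda$ of a nontrivial cluster monomial has strictly positive level, hence lies in the Tits cone, we may take $V$ to be the integrable highest-weight module with extremal weight $\lambda$, which has finite-dimensional weight spaces. It therefore remains to treat the elements of the generic, triangular and theta bases that are not cluster monomials: for the $A_1^{(1)}$ cluster algebra these are the \emph{imaginary} elements, whose $\bfg$-vectors are exactly the nonzero lattice points of the imaginary ray, and for each such vector $-n\delta$, $n\geq1$, each of the three bases contains a single element. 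Write $z$ for the imaginary element of $\bfg$-vector $-\delta$, which is common to all three bases. Recalling the rank-two descriptions of the generic basis \cite{Dup12}, the triangular basis \cite{BZ14}, and the theta basis \cite{GHKK14} --- which coincides in rank two with the greedy basis of \cite{LLZ14} by \cite{CGMMRSW15} --- one sees that along the imaginary ray each basis is obtained by applying to $z$ a family of universal monic polynomials of degree $n$, the three families differing only in lower-order terms; for instance the generic basis gives $\{z^n\}_{n\geq1}$, arising from semisimple regular representations of the Kronecker quiver, while the theta/greedy basis gives a Chebyshev-type family. Moreover, by standard rank-two arguments the subset of $\cA_{dp}(c)$ consisting of elements whose $\bfg$-vector lies on the imaginary ray is, up to coefficient monomials, the polynomial ring $\kk[z]$, so every candidate basis element necessarily has this form.

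Now recall the classification \cite{Cha86, CP86, CP88} of the irreducible level-zero representations of $\wh{LSL}_2$ with finite-dimensional weight spaces: up to isomorphism these are the modules $V(S)$ indexed by a finite multiset $S\subset\kk^\times$, realized as tensor products of evaluation modules and determined by the Drinfeld polynomial $\prod_{\zeta\in S}(u-\zeta)$; for each $n\geq1$ there are such modules for which $-n\delta$ is extremal. Let $V(\zeta)$ denote a building block realizing $z$, depending on a parameter $\zeta\in\kk^\times$. The first key point is that $\Delta_{V(\zeta),-\delta}$, although only rational on $\wh{LSL}_2$ as a matrix coefficient of an infinite-dimensional module, restricts to a \emph{regular} function on the Coxeter double Bruhat cell $G^{c,c^{-1}}$, and that this restriction equals $z$ independently of $\zeta$: on the finite-dimensional cell $G^{c,c^{-1}}$, whose points admit a short factorization involving a single affine Chevalley generator on each side, the dependence of the minor on the evaluation parameter drops out. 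The second key point is that minors are multiplicative on one-dimensional extremal-weight lines under tensor products, mirroring the multiplicativity of Caldero-Chapoton functions under direct sums. Hence, when the $\zeta_i$ are distinct and in general position, $V(S)=V(\zeta_1)\otimes\cdots\otimes V(\zeta_n)$ is irreducible with one-dimensional $(-n\delta)$-weight space and $\Delta_{V(S),-n\delta}|_{G^{c,c^{-1}}}=z^n$, the generic-basis element; whereas when the $\zeta_i$ collide or form a resonant string, the tensor product is reducible, $V(S)$ is a proper subquotient, and $\Delta_{V(S),-n\delta}|_{G^{c,c^{-1}}}$ is a different monic degree-$n$ polynomial in $z$ --- still of $\bfg$-vector $-n\delta$, hence pinned down by its lower-order terms.

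To finish, one matches the two remaining ``degenerate'' configurations of $S$ with the theta/greedy and triangular bases. The cleanest route is to propagate the identity $\Delta_{V(\zeta),-\delta}|_{G^{c,c^{-1}}}=z$ from $n=1$ to all $n$ using the exchange relations of $\cA_{dp}(c)$ together with the $Q$-system/Plücker-type relations for level-zero minors that control how $\Delta_{V(S),-|S|\delta}$ varies when a point is added to $S$: for $S$ a $q$-string this recursion is Chebyshev-like and reproduces one of the two remaining families, while the ``all coincident'' configuration (an evaluation of a symmetric power) reproduces the other. Comparison with the known rank-two expansions of the theta and triangular bases along the imaginary ray then identifies which is which, and shows that the discrepancy among the three bases is accounted for exactly by the continuous parameters $\zeta\in\kk^\times$ entering the classification of irreducible level-zero representations.

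The main obstacle is precisely this last matching. On the representation side it requires an explicit evaluation of $\Delta_{V(S),-n\delta}$ on a factorization chart of $G^{c,c^{-1}}$, reducing the level-zero matrix coefficient to a product of finite-type $SL_2$-minors at the points of $S$ and making visible both the monic degree-$n$ polynomial in $z$ and its precise dependence on the elementary symmetric functions of $S$; on the cluster side it requires the explicit rank-two expansions of the generic, triangular and theta bases along the imaginary ray from \cite{Dup12, BZ14, GHKK14, LLZ14}, so that each can be matched with the correct specialization of $S$. A secondary point, to be checked first, is the regularity of these a priori merely rational level-zero minors after restriction to $G^{c,c^{-1}}$, which should follow from the Bruhat conditions defining the cell.
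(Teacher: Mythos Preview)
Your overall architecture is right: cluster monomials are covered by Theorem~\ref{thm:mainthm2}, and only the imaginary elements of $\bfg$-vector $n\omega^\circ$ (with $\omega^\circ=-\omega_1+\omega_2$, not $-n\delta$; and note that cluster monomials can have negative level, so the reduction to highest-weight modules is not quite as you state it) need separate treatment.  The genuine gap is your multiplicativity step.  The irreducible level-zero $\wh{LSL}_2$-module $V(\bfa)$ with finite-dimensional weight spaces is \emph{not} the tensor product $V(a_1)\otimes\cdots\otimes V(a_n)$: that tensor product carries $n$ independent loop variables, hence has infinite-dimensional $\wh H$-weight spaces and violates the hypothesis of the theorem, while $V(\bfa)$ has a single $\kk[u^{\pm1}]$ factor coupling all tensor slots.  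Consequently $\Delta_{V(\bfa),\,n\omega^\circ}$ does \emph{not} factor as a product of rank-one minors.  Already for $n=2$ the paper's explicit calculation gives a coefficient $d_\bfa(1,0)=2+S_{\bfa,1}$ with $S_{\bfa,1}=a_1/a_2+a_2/a_1$, so for generic $(a_1,a_2)$ the restricted minor is \emph{not} $z^2$.  In fact none of the three bases corresponds to a generic or to a ``colliding/string'' configuration of evaluation points: each is cut out by a specific algebraic constraint on the elementary symmetric polynomials $e_\ell(\bfa)$, and your proposed dichotomy between generic and degenerate parameters does not match the actual correspondence.

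What the paper does instead is exactly what you flag as ``the main obstacle'': it carries out the explicit evaluation of $\Delta_{V(\bfa),\,n\omega^\circ}$ on the factorization chart of $\dbc{c}$ (Proposition~\ref{prop:imaginary minor}), obtaining coefficients $d_\bfa(m,k)=\sum_r\binom{m-r}{k}\binom{n-2r}{m-r}S_{\bfa,r}$ that depend on $\bfa$ only through the symmetric quantities $S_{\bfa,r}$.  It then shows, via a sequence of binomial identities (Lemma~\ref{le:binomial identity} and Lemmas~\ref{le:a sums1}--\ref{le:a sums3}), that imposing $e_\ell(\bfa)=0$ for $\ell\le\lfloor n/2\rfloor$, respectively $e_\ell(\bfa)=1$, respectively $e_\ell(\bfa)=1/\ell!$, forces the $d_\bfa(m,k)$ to equal the known coefficients of the greedy, triangular, and generic basis elements.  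So the matching is done by direct comparison of explicit closed formulas, not by a structural multiplicativity or recursion argument; your sketch would need to replace the tensor-product heuristic with this computation.
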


Implicit in the statement is that weights which are not $\bfg$-vectors of cluster monomials and to which known canonical basis constructions associate different basis elements are precisely the weights for which the restriction of the principal generalized minor $\Delta_{V,\lambda}$ depends nontrivially on the choice of $V$.
Necessarily, such weights lie in the complement of the Tits cone and its opposite, and as recalled above representations with such extremal weights generally depend on continuous parameters.
By tuning these parameters one obtains the three bases above as well as a family of other bases interpolating between them.

\subsection*{Perspective from quiver representations}
From a broader point of view, part of the conceptual significance of our results is that they provide a deeper insight into certain parallels between the representation theories of Kac-Moody groups and of finite-dimensional algebras.
To explain this, we take~$\wh{G}$ to be symmetric, although a similar discussion can be had in the symmetrizable case by considering species \cite{DR76,Gab73}.

Recall again that in the symmetric case the choice of a Coxeter element~$c\in W$ is equivalent to the choice of an acyclic quiver~$Q$ whose underlying graph is the Dynkin diagram of~$\wh{G}$.
Orthogonal to our discussion so far, the cluster monomials in $\cA_{dp}(c)$ have a complete and non-recursive description in terms of the representation theory of~$Q$ \cite{CC06, CK06}.
Namely, there is a bijection between cluster monomials and rigid representations of $Q$.
This intertwines the labeling of the former by~$\bfg$-vectors with the labeling of the latter by their minimal injective copresentations.
The Laurent expansion of each cluster monomial is equal to a generating function of submodules, sorted by dimension vector, of its associated representation.
In this sense, the representation theory of $Q$ entirely controls the structure of~$\cA_{dp}(c)$.

Recall that representations of~$Q$ are classified by their behavior under the Auslander-Reiten translation~$\tau: \rep Q \to \rep Q$~\cite{ASS06}.
An indecomposable representation~$M$ is said to be preprojective if~$\tau^k(M)$ is projective for some~$k\ge 0$, postinjective if it is of the form~$\tau^k(I)$ for some injective~$I$ and some~$k\ge0$, and regular otherwise.
The quiver $Q$ is of finite type if and only if all indecomposable representations are both preprojective and postinjective.
In general these two classes are distinct, but they are dual in a suitable sense and their structure can be completely understood in a uniform way.
On the other hand, the tame-wild dichotomy implies that regular representations are essentially unclassifiable unless~$Q$ is of affine type, in which case they can be explicitly described.

Consider the parallel nature of this classification with that of the irreducible weight representations of $\wh{G}$ with finite dimensional weight spaces.
The group $\wh{G}$ is of finite type if and only if all irreducible representations are both highest-weight and lowest-weight.
In general these two classes are distinct, but they are dual and their structure can be completely understood in a uniform way.
On the other hand, the classification of irreducible representations which are neither highest- nor lowest-weight is completely open unless $\wh{G}$ is of affine type, in which case they can be explicitly described following \cite{Cha86, CP86, CP88}.

Theorem \ref{thm:mainthm1} explicitly links these two trichotomies: given the~$\bfg$-vector $\lambda$ of a cluster variable in $\cA_{dp}(c)$, a weight representation~$V$ for which~$\lambda$ is extremal is highest-weight, lowest-weight, or neither exactly when the~representation of $Q$ associated to~$x_{\lambda;c}$ is respectively preprojective, postinjective, or regular \cite{RS17, RS15}.
Thus our results demonstrate that the parallel between these classifications is not a superficial one, but reflects an equivalence between certain structural information about representations of~$Q$ and structural information about representations of~$G$.
Or more concretely, the submodule structure of the rigid representations of~$Q$ is explicitly controlled by how the subvariety~$G^{c,c^{-1}} \subset G$ probes the extremal weight representations of~$G$.
If Theorem \ref{thm:mainthm3} provides correct intuition for the general picture, this phenomenon is perhaps not even specific to rigid representations, as the interpretation of cluster monomials in terms of quivers may be extended to these bases by considering potentially non-rigid representations (see Remark~\ref{rem:quiver bases}).

\subsection*{Organization}
In Section \ref{sec:cdbc}, we introduce and study certain regular maps among Coxeter double Bruhat cells of different Coxeter elements.
These maps allow us to geometrize the mutations at or the freezing of a sink or a source.
We show that cluster variables and generalized minors behave well and in a compatible way under these maps.
In Section \ref{sec:bigcambrian}, we use these results to prove Theorem~\ref{thm:mainthm2}.
The argument is a double induction on certain subwords of $c^{\infty}$ and on the rank of the group.
In Section \ref{sec:bases}, we study the principal level-zero minors of irreducible weight representations of $\wh{LSL}_2$ with finite-dimensional weight spaces.
We fully compute their dependence on the continuous parameters appearing in the classification of such representations, and prove they may be specialized to recover various canonical bases.

\textsc{Acknowledgments}
The second author is grateful to Nathan Reading for repeatedly correcting his wrong intuitions on doubled Cambrian fans.
D.R. was partially supported by an AMS-Simons Travel Grant; S.S. was supported by ISF grant 1144/16;  H.W. was supported by NSF Postdoctoral Fellowship DMS-1502845 and NSF grant DMS-1702489.

\section{Coxeter double Bruhat cells}
\label{sec:cdbc}

In this section we study maps between different Coxeter double Bruhat cells of a fixed group, or of a group and subgroup of corank one.
The main results are that both cluster monomials and generalized minors behave well and in a similar way under these maps.

\subsection{Recollections on double Bruhat cells}

We summarize our notation and conventions here, mostly following~\cite{RSW16}.
Let $\kk$ be an algebraically closed field of characteristic zero and~$A=(a_{ij})$ an~$n\times n$ symmetrizable Cartan matrix with Weyl group~$W=\langle s_i\rangle$.
We fix a Coxeter element~$c\in W$ and, in order to simplify our notation, we assume that~$c=s_1\cdots s_n$.
Up to simultaneously permuting the rows and columns of~$A$, there is no loss of generality in doing so and we keep this assumption throughout the paper.

We write $\wh{G}$ for the minimal version of the associated Kac-Moody group as considered in \cite{KP83} and \cite[Section 7.4]{Kum02}.
This is an ind-algebraic group of ind-finite type whose derived subgroup~$G$ is generated by the simple coroot subgroups $\varphi_i: SL_2 \into \wh{G}$ for $i \in [1,n]$.
When $A$ is of untwisted affine type, $G$ can be identified with the universal central extension $\widetilde{LG^{\circ}}$ of the algebraic loop group $LG^\circ$ of the associated semisimple algebraic group $G^{\circ}$.
The full group $\wh{G}$ is then the semidirect product $\wh{LG^{\circ}} := \widetilde{LG^{\circ}} \rtimes \kk^\times$ with the group of loop rotations.

Let $B_+$ and $B_-$ denote the standard opposite Borel subgroups of $G$ and $H$ its maximal torus.
We denote their counterparts in $\wh{G}$ as $\wh{B}_+$, $\wh{B}_-$, and $\wh{H}$ respectively.
We have a decomposition of $G$ into double Bruhat cells
\[
  G^{u,v} = B_+ \dot{u} B_+ \cap B_- \dot{v} B_-,
\]
where $u, v \in W$ and $\dot{u}, \dot{v}$ are arbitrary lifts to $G$.
Since we most often consider the case $u = v^{-1}$, we also introduce the notation
\[
  \dbc{w} := G^{w,w^{-1}}.
\]

We use the following notation to discuss specific elements of $G$.
For $t \in \kk^\times$ and $i \in[1,n]$, we set
\begin{gather*}
  x_{i}(t):=\varphi_i\begin{pmatrix} 1 & t \\ 0 & 1\end{pmatrix}
  \quad
  t^{\alpha_i^\vee}:=\varphi_i\begin{pmatrix}t & 0 \\ 0 & t^{-1}\end{pmatrix}
  \quad
  x_{\ol{\imath}}(t):=\varphi_i\begin{pmatrix} 1 & 0 \\ t & 1\end{pmatrix}
  \quad
  \ol{s_{i}} := \varphi_i \begin{pmatrix} 0 & -1 \\ 1 & 0 \end{pmatrix}.
\end{gather*}
For $w \in W$, we set $\ol{w} := \ol{s_{i_1}}\cdots\ol{s_{i_k}}$, where $s_{i_1}\cdots s_{i_k}$ is any reduced expression for $w$.

The weight lattice $P$ of $G$ is spanned by the fundamental weights $\{\omega_i\}_{i \in [1,n]}$.
It embeds into the weight lattice $\wh{P}$ of $\wh{G}$, but the two are not isomorphic except in finite type when $H = \wh{H}$.
In untwisted affine type, $P$ is identified with the direct sum of the weight lattice $P^\circ$ of $G^\circ$ and the group $\kk \kappa$ of characters of the $\kk^\times$ appearing in the central extension, while $\wh{P}$ is the further direct sum with the group $\kk \delta$ of characters of the $\kk^\times$ of loop rotations.

If $V$ is a weight representation of $G$, a weight vector $v_\lambda \in V$ is said to be \newword{extremal} if $\dot{w}v_\lambda$ is either a highest- or lowest-weight vector of $\varphi_i(SL_2)$ for every $i\in[1,n]$ and every $w \in W$ \cite{Kas94}.
A weight $\lambda$ is extremal if each $v_\lambda$ in the weight space $ V_\lambda$ is an extremal vector.
Suppose $\lambda$ is an extremal weight of $V$, and choose an extremal vector~$v_\lambda\in V_\lambda$ together with a projection $\pi_\lambda: V \onto  \kk v_\lambda$ factoring through the weight projection onto $V_\lambda$.
Then the principal generalized minor~$\Delta_{V,\lambda}$ is the function whose value at an element~$g\in G$ is the ratio~$\pi_\lambda(gv_\lambda)/v_\lambda$.
We omit $v_\lambda$ and $\pi_\lambda$ from the notation for efficiency, since for our purposes they will generally be irrelevant.

When $\lambda$ is dominant (resp.\ antidominant), we let $V(\lambda)$ denote the irreducible representation with highest (resp.\ lowest) weight $\lambda$.
In this case, we abbreviate $\Delta_{V(\lambda),\lambda}$ to $\Delta_\lambda$.
Given $u, v \in W$, we also have the (non-principal, if $u \neq v$) generalized minor
\[
  \Delta^{u\omega_i}_{ v\omega_i}(g) := \Delta_{\omega_i}(\ol{u}^{\raisebox{1pt}{$\scriptstyle -1$}}g\ol{v}).
\]

Let $B_c=(b_{ij})$ denote the $n\times n$ skew-symmetrizable matrix given by
\begin{equation}
  \label{eq:coxeter exchange matrix}
  b_{ij}=\begin{cases}
    0 & \text{if $i=j$;}\\
    a_{ij} & \text{if $j < i$;}\\
    -a_{ij} & \text{if $i < j$.}
  \end{cases}
\end{equation}
We write $\cA(c)$ for the coefficient-free cluster algebra over $\kk$ associated to $B_c$.
Recall that~it~is~generated by elements called \newword{cluster variables} grouped into overlapping $n$-element subsets called \newword{clusters} \cite{FZ02}.
The cluster algebra $\cA_{dp}(c)$ with \newword{doubled principal coefficients} is the cluster algebra of geometric type with initial exchange matrix
\[
  \left[
    \begin{array}{c}
      B_c\\
      Id_n\\
      Id_n
    \end{array}
  \right].
\]
We denote by $x_{\omega_i;c}$ its $i$-th initial cluster variable and by $z_{i;c}$ (resp.\ $z_{\ol{\imath};c}$) its $i$-th (resp.\ $(n+i)$-th) frozen variable.
In this setting, frozen variables are invertible.

For any $u, v \in W$, there is an upper cluster structure on the coordinate ring of $G^{u,v}$ \cite{BFZ05,Wil13a}.
In the case of $\dbc{c}$, a monomial transformation of frozen variables identifies this cluster structure as having doubled principal coefficients.
\begin{proposition}
  \cite{YZ08,RSW16}
  \label{prop:coordring}
  There is an isomorphism $\cA_{dp}(c) \cong \kk[\dbc{c}]$ that identifies the initial cluster variables and the frozen variables with the following restrictions of monomials in generalized minors:
  \begin{equation*}
    x_{\omega_i;c} = \Delta_{\omega_i}\Big|_{\dbc{c}},
    \qquad
     z_{i;c} = \Delta^{\omega_i}_{ c\omega_i} \prod_{j < i}(\Delta^{\omega_j}_{c \omega_j})^{a_{ji}}\Big|_{\dbc{c}},
    \qquad
     z_{\ol{\imath};c} = \Delta^{c \omega_i}_{\omega_i} \prod_{j < i}(\Delta^{c \omega_j}_{\omega_j})^{a_{ji}}\Big|_{\dbc{c}}.
  \end{equation*}
\end{proposition}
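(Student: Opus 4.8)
The plan is to deduce the proposition from the general description of the cluster structure on a double Bruhat cell, specialized to the Coxeter situation. First I would invoke the theorem of Berenstein-Fomin-Zelevinsky \cite{BFZ05}, extended to the Kac-Moody setting by Williams \cite{Wil13a}: for any $u,v \in W$ the coordinate ring $\kk[G^{u,v}]$ is an upper cluster algebra whose seeds are indexed by double reduced words $\bfi$ of the pair $(u,v)$. In such a seed the initial cluster has $n + \ell(u) + \ell(v)$ variables---indexed, say, by $[-n,-1]\cup[1,\ell(u)+\ell(v)]$---the mutable ones being the positions of $[1,\ell(u)+\ell(v)]$ that are not the final occurrence of their letter; the cluster variable at a position $k$ is an explicit generalized minor $\Delta(k;\bfi)$, namely the restriction to $G^{u,v}$ of a minor of the form $\Delta^{u'\omega_i}_{v'\omega_i}$ for suitable $u'\le u$ and $v'\le v$, and the exchange matrix $\widetilde{B}(\bfi)$ is read off combinatorially from $\bfi$.

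Next I would specialize to $u=c$, $v=c^{-1}$ with $c=s_1\cdots s_n$ and fix a convenient double reduced word $\bfi$ of $(c,c^{-1})$. Since each simple reflection occurs exactly once in the chosen reduced expressions for $c$ and for $c^{-1}$, every letter of $[1,n]$ occurs in $\bfi$ exactly once barred and once unbarred, so the seed has precisely $n$ mutable and $2n$ frozen variables---consistent with $\dim\dbc{c}=n+\ell(c)+\ell(c^{-1})=3n$ and with the shape of the matrix $\big[\begin{smallmatrix}B_c\\ Id_n\\ Id_n\end{smallmatrix}\big]$ defining $\cA_{dp}(c)$. Using the explicit formula for $\Delta(k;\bfi)$, together with the standard identities among the lifts $\ol{s_i}$ and the fact that $s_i$ occurs exactly once in $c$, one checks that the $n$ mutable cluster variables restrict to $\Delta_{\omega_i}\big|_{\dbc{c}}$, while the $2n$ frozen variables---the $n$ ``initial'' frozen indices together with the $n$ final-occurrence positions---restrict, after a monomial change of the frozen variables, to exactly the monomials in $\Delta^{\omega_j}_{c\omega_j}$ and $\Delta^{c\omega_j}_{\omega_j}$ appearing in the statement. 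The same monomial transformation replaces the bottom $2n\times n$ block of $\widetilde{B}(\bfi)$ by $\big[\begin{smallmatrix}Id_n\\ Id_n\end{smallmatrix}\big]$, whereas its top $n\times n$ block is $B_c$ as defined in \eqref{eq:coxeter exchange matrix}. Finally, since $B_c$ is acyclic, the acyclic upper cluster algebra coincides with the ordinary cluster algebra (\cite{BFZ05} in finite type, \cite{Wil13a} in general Kac-Moody type), so $\cA_{dp}(c)=\ol{\cA}_{dp}(c)\cong\kk[\dbc{c}]$ with the stated identification of generators.

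I expect the main obstacle to be this middle step: reconciling the various normalization choices in the BFZ/Williams construction---the double reduced word, the normalizations of the torus and Weyl-group lifts $\dot{w}$, the orientation convention for the exchange quiver attached to $\bfi$, and the placement of the ``initial'' frozen indices $[-n,-1]$---with the doubled-principal-coefficients normalization, and in particular pinning down the precise monomial transformation of frozen variables that simultaneously produces the asserted minor monomials and the bottom block $\big[\begin{smallmatrix}Id_n\\ Id_n\end{smallmatrix}\big]$. This is bookkeeping-heavy verification rather than a conceptual difficulty; it is the content of \cite{YZ08} when $G$ is reductive and of \cite{RSW16} in general.
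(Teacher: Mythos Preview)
Your proposal is correct and matches the approach of the cited references. Note that the paper itself does not supply a proof of this proposition: it is stated with the citation \cite{YZ08,RSW16} and used as a known result, so there is no ``paper's own proof'' to compare against beyond those references. Your sketch---invoking the BFZ/Williams upper cluster structure on $G^{u,v}$, specializing to the double reduced word for $(c,c^{-1})$, identifying the initial and frozen variables as the indicated minors, and performing the monomial change of frozen variables to obtain doubled principal coefficients---is precisely the content of \cite{YZ08} in finite type and \cite[\S3]{RSW16} in the Kac--Moody setting, and your assessment that the only real work is the bookkeeping of normalizations is accurate.
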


The algebra $\cA_{dp}(c)$ is $\ZZ^n$-graded and its cluster variables are homogeneous elements \cite{FZ07}.
In view of \cite[Conjecture 2.3]{NS14} and \cite[Theorem 1.2]{NZ12}, we identify $\ZZ^n$ with the weight lattice~$P$ by sending unit vectors to fundamental weights.
With this convention, the degree of the initial cluster variable $x_{\omega_i;c}$ is $\omega_i$ while the degrees of $z_{j;c}$ and $z_{\ol{\jmath};c}$ are chosen in any way so that the degree of the product $z_{j;c}z_{\ol{\jmath};c}$ equals $-\sum_{i=1}^n b_{ij}\omega_i$.

The degree of a cluster variable and more generally of a \newword{cluster monomial} -- that is a product of cluster variables from a given cluster -- is called its \newword{$\bfg$-vector}.
These parametrize cluster monomials \cite{DWZ10,Dem10} and we denote by $x_{\lambda;c}$ the cluster monomial of $\cA_{dp}(c)$ whose $\bfg$-vector is $\lambda \in P$.

The set of $\bfg$-vectors can be organized to give a combinatorial shadow of $\cA_{dp}(c)$: its \newword{$\bfg$-vector fan}.
This is a simplicial fan in $P$ whose rays are spanned by the $\bfg$-vectors of cluster variables and whose maximal cones correspond to clusters.
Our main result applies to cluster monomials whose $\bfg$-vector is contained in a notable subset of the $\bfg$-vector fan, the doubled $c$-Cambrian fan, that we will recall in Section \ref{sec:bigcambrian}.

We conclude this summary with the following result which facilitates the explicit evaluation of functions on $\dbc{c}$.
\begin{proposition}
  \cite{FZ99,Wil13b}
  \label{prop:shuffled factorizations}
  For any shuffling~$(i_1,\dots, i_{2n})$ of the tuples~$(n,\dots,1)$ and~$(\ol{1},\dots,\ol{n})$, there is an open embedding
  \[
    \begin{array}{ccc}
      H \times (\kk^\times)^{2n} & \longrightarrow & \dbc{c} \\
      (h,t_1,\dots,t_{2n}) & \longmapsto & h x_{i_1}(t_1) \cdots  x_{i_{2n}}(t_{2n}).
    \end{array}
  \]
  In particular, a generic element of $\dbc{c}$ admits a factorization of this form.
\end{proposition}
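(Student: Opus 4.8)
The plan is to recognize the displayed map as a special case of the parametrization of an arbitrary double Bruhat cell by elementary factorizations along a double reduced word, and then to reduce the statement to \cite{FZ99} in finite type and to \cite{Wil13b} in general Kac--Moody type.

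First I would record the combinatorial input. Since $c=s_1\cdots s_n$, reading the tuple $(n,n-1,\ldots,1)$ from left to right gives $s_ns_{n-1}\cdots s_1=c^{-1}$, which is a reduced word because $\ell(c^{-1})=\ell(c)=n$; likewise the barred tuple $(\overline{1},\ldots,\overline{n})$ spells the reduced word $s_1\cdots s_n=c$. Hence any shuffle $(i_1,\ldots,i_{2n})$ of these two tuples is a double reduced word for the pair $(c,c^{-1})$: its subword of unbarred letters is a reduced word for $c^{-1}$, its subword of barred letters is a reduced word for $c$, and its length is $\ell(c)+\ell(c^{-1})=2n$.

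Next I would invoke the general factorization theorem: for any $u,v\in W$ and any double reduced word $\mathbf{i}=(i_1,\ldots,i_m)$ for $(u,v)$, the map $(h,t_1,\ldots,t_m)\mapsto h\,x_{i_1}(t_1)\cdots x_{i_m}(t_m)$ is a biregular isomorphism from $H\times(\kk^\times)^{\ell(u)+\ell(v)}$ onto a Zariski-open subset of $G^{u,v}$. In the finite-dimensional case this is \cite[Theorem 1.2]{FZ99}; its extension to symmetrizable Kac--Moody $G$ is \cite{Wil13b}. There are three things to verify. (i) The image lies in $G^{u,v}$: an elementary factor $x_{i_k}(t_k)$ with $t_k\in\kk^\times$ lies in $B_+$ but also in $B_-\dot s_{i_k}B_-$, and symmetrically each $x_{\overline{\imath}}(t)$ lies in $B_-$ but also in $B_+\dot s_iB_+$; reading the product left to right, the unbarred factors therefore accumulate (on the right) a reduced word for the $B_-$-side element $c^{-1}$ while the barred factors accumulate one for the $B_+$-side element $c$, no length drop occurring precisely because the two subwords are reduced, so the product lies in $B_+\dot cB_+\cap B_-\dot c^{-1}B_-$. (ii) Injectivity follows from uniqueness of such elementary factorizations, which one proves by induction on $m$, peeling off one factor at a time using uniqueness in the Bruhat decomposition. (iii) Dominance and openness follow from (i), (ii), smoothness of $G^{c,c^{-1}}$, and the dimension count $\dim G^{c,c^{-1}}=\dim H+\ell(c)+\ell(c^{-1})=3n=\dim\bigl(H\times(\kk^\times)^{2n}\bigr)$, everything being read off on the finite-dimensional pieces of the relevant ind-variety. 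The final assertion of the proposition is then immediate: the image is a nonempty Zariski-open, hence dense, subset of the irreducible variety $G^{c,c^{-1}}$, so a generic point of it admits a factorization of the stated form.

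The step I expect to be the real obstacle, and which is the actual content of \cite{Wil13b}, is making the Kac--Moody case rigorous: equipping $G^{c,c^{-1}}$ with the structure of an honest quasi-affine variety on which Zariski-openness and biregularity are meaningful, and checking that the uniqueness-of-factorization argument survives the loss of finite-dimensionality of $G$. The combinatorics of the second paragraph and the finite-type input are entirely classical.
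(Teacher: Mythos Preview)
Your proposal is correct, and indeed the paper does not supply its own proof: the proposition is simply attributed to \cite{FZ99,Wil13b} and used as a black box. Your outline accurately explains why the displayed map is the specialization to $(u,v)=(c,c^{-1})$ of the general double-reduced-word parametrization, and correctly identifies that any shuffle of $(n,\ldots,1)$ and $(\overline{1},\ldots,\overline{n})$ is a double reduced word for this pair; the remaining points you sketch (image containment via Bruhat multiplication along reduced subwords, injectivity by peeling off factors, openness by dimension count) are exactly the content of the cited references.
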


Since the 1-parameter subgroups $x_i(t)$ and $x_{\ol{\imath}}(t)$ are preserved by the adjoint action of $H$, one can modify this generic factorization in various ways.
That is, up to automorphisms of $H \times (\kk^\times)^{2n}$ we can place the term $h$ wherever we like on the right hand side or, more generally, write it as a product and distribute its factors independently.

\subsection{Sink and source mutation}
Given a Coxeter element $c=s_1\cdots s_n$, we consider the conjugate Coxeter elements $\muoc$ and $\munc$.
The $n \times n$ exchange matrices $B_{\muoc}$ and $B_{\munc}$ are obtained from~$B_c$ by sink and source mutations in directions 1 and $n$, respectively.
There are thus isomorphisms~$\cA(\muoc)\congto\cA(c)$ and $\cA(\munc)\congto\cA(c)$ of coefficient-free cluster algebras.

The first goal of this section is to show that these isomorphisms persist under the appension of doubled principal coefficients and to describe the resulting isomorphisms of double Bruhat cells explicitly in Lie-theoretic terms.
We further show that the restrictions of certain minors to $\dbc{c}$ are pulled back under these isomorphisms from restrictions of minors to $\dbc{\muoc}[\muoci]$ and $\dbc{\munc}[\munci]$.
As a consequence, to show that a cluster monomial in $\cA_{dp}(c)$ is the restriction of a minor it is often sufficient to show the corresponding statement for $\cA_{dp}(\muoc)$ or $\cA_{dp}(\munc)$.

Let $\psi_{1;c}:\dbc{c}\dashrightarrow \dbc{\muoc}[\muoci]$ denote the rational map which sends a generic element
\begin{equation*}
  x_{\ol{1}}(t_{\ol{1}}) x_{\ol{2}}(t_{\ol{2}}) \cdots x_{\ol{n}}(t_{\ol{n}}) h x_n(t_n) \cdots x_2(t_2) x_1(t_1)
\end{equation*}
of $\dbc{c}$ to the following generic element of $\dbc{s_1cs_1}[s_1c^{-1}s_1]$:
\begin{equation*}
  x_1(t_{\ol{1}}) t_{\ol{1}}^{\alpha_1^\vee} x_{\ol{2}}(t_{\ol{2}}) \cdots x_{\ol{n}}(t_{\ol{n}}) h x_n(t_n) \cdots x_2(t_2) t_1^{\alpha_1^\vee} x_{\ol{1}}(t_1).
\end{equation*}
We have the following result, which in finite type is a more detailed version of \cite[Corollary~5.10 and Remark~5.11]{YZ08}.
\begin{proposition}
  \label{prop:psi}
  The map $\psi_{1;c}$ is a biregular isomorphism.
  The induced isomorphism
  \[
    \psi_{1;c}^*:\cA_{dp}(\muoc)\congto\cA_{dp}(c)
  \]
  sends cluster monomials to cluster monomials, up to multiplication by frozen variables.
  Specifically,
  \begin{enumerate}
    \item $\psi_{1;c}^*$ sends any cluster variable $x_{\lambda;\muoc}$ with $\lambda \neq -\omega_1$ to the cluster variable $x_{s_1\lambda;c}$;
    \item $\psi_{1;c}^*$ sends the cluster variable $x_{-\omega_1;\muoc}$ to the product $x_{\omega_1;c}z_{1;c}^{-1}z_{\ol{1};c}^{-1}$;
    \item $\psi_{1;c}^*$ sends the frozen variable $z_{i;\muoc}$ (resp.\ $z_{\ol{\imath};\muoc}$) to $z_{i;c}z_{1;c}^{-a_{1i}}$ (resp.\ $z_{\ol{\imath};c}z_{\ol{1};c}^{-a_{1i}}$).
  \end{enumerate}
\end{proposition}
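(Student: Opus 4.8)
The plan is to verify directly, using the explicit formula defining $\psi_{1;c}$, that it is a well-defined biregular isomorphism, and then to track generalized minors through it in order to deduce the stated action on cluster variables via Proposition \ref{prop:coordring}.

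\medskip

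\textbf{Step 1: biregularity.} First I would check that the stated assignment really does land in $\dbc{\muoc}[\muoci]$. Using Proposition \ref{prop:shuffled factorizations} (applied with the shuffling that places $\ol 1,\ol 2,\dots$ first and $n,\dots,2,1$ last, for the source, and the mirror shuffling for the target), a generic element of each cell has a factorization of the indicated type parametrized by $H\times(\kk^\times)^{2n}$. The map in coordinates sends $(h,t_{\ol 1},\dots,t_1)$ to $(t_{\ol 1}^{-\alpha_1^\vee}t_1^{-\alpha_1^\vee}h,\ t_{\ol 1},\dots,t_1)$ on the torus factor and is the identity on the remaining $(\kk^\times)^{2n-2}$ coordinates — here I use that $t_{\ol 1}^{\alpha_1^\vee}$ and $t_1^{\alpha_1^\vee}$ can be commuted to the torus slot and absorbed into $h$ by the adjoint $H$-invariance of the $x_i(t)$ noted after Proposition \ref{prop:shuffled factorizations}. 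This is an automorphism of $H\times(\kk^\times)^{2n}$, hence $\psi_{1;c}$ restricts to an isomorphism between two open dense subsets. To upgrade to a biregular isomorphism of the whole cells I would produce the inverse map by an entirely analogous formula (reading the definition backwards: $x_1(a)a^{\alpha_1^\vee}\mapsto x_{\ol 1}(a)$ and symmetrically), check it is regular on all of $\dbc{\muoc}[\muoci]$ by exhibiting it in the coordinates furnished by a suitable open embedding, and observe the two composites agree with the identity on a dense open set, hence everywhere since the cells are irreducible varieties.

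\medskip

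\textbf{Step 2: transport of minors.} The heart of the matter is to compute, for each fundamental weight $\omega_i$, how the restrictions of $\Delta_{\omega_i}$, $\Delta^{\omega_i}_{c\omega_i}$, and $\Delta^{c\omega_i}_{\omega_i}$ pull back under $\psi_{1;c}^*$. Since $\psi_{1;c}$ differs from the identity only in the first $SL_2$-factor — it replaces $x_{\ol 1}(t_{\ol 1})$ at the far left by $x_1(t_{\ol 1})t_{\ol 1}^{\alpha_1^\vee}$ and $x_1(t_1)$ at the far right by $t_1^{\alpha_1^\vee}x_{\ol 1}(t_1)$ — the effect on a minor is computed by a rank-one $SL_2$ calculation in $V(\omega_i)$, using that the extremal vectors $v_{\omega_i}$, $v_{c\omega_i}$ and the projections are acted on in a controlled way by $\varphi_1(SL_2)$ and by $\ol{s_1}$. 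For $i\neq 1$ the first factor fixes the relevant highest/lowest weight lines (because $\langle\alpha_1^\vee,\omega_i\rangle=0$ or because the vector is already extremal for $\varphi_1(SL_2)$), so $\psi_{1;c}^*$ intertwines $\Delta_{v\omega_i}^{u\omega_i}|_{\dbc c}$ with $\Delta_{s_1 v\,\omega_i}^{s_1 u\,\omega_i}|_{\dbc{\muoc}[\muoci]}$; for $i=1$ the conjugation by $\ol{s_1}$ and the extra torus factors $t_{\ol 1}^{\alpha_1^\vee}$, $t_1^{\alpha_1^\vee}$ convert $\Delta_{\omega_1}$ into a product of $\Delta_{\omega_1}$ with the two exchangeable frozen factors, reflecting the sink mutation in direction $1$. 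Feeding these identities into the monomial expressions of Proposition \ref{prop:coordring} for $x_{\omega_i;c}$, $z_{i;c}$, $z_{\ol\imath;c}$ (and their $\muoc$-counterparts) yields parts (2) and (3) directly, and yields part (1) for the initial cluster variables.

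\medskip

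\textbf{Step 3: general cluster variables.} Parts (1)–(3) for the \emph{initial} seed, together with the fact that $\psi_{1;c}^*$ is an algebra isomorphism compatible with the cluster structures, force the stated behaviour on all cluster variables: the isomorphism $\cA(\muoc)\xrightarrow{\sim}\cA(c)$ of coefficient-free cluster algebras already matches $x_{\lambda;\muoc}$ with $x_{s_1\lambda;c}$ for $\lambda\neq-\omega_1$ by the combinatorics of mutation at a sink, and one checks the coefficient bookkeeping matches (3) by comparing $\bfg$-vectors: the degree of $\psi_{1;c}^*(x_{\lambda;\muoc})$ computed from the frozen-variable action must be $s_1\lambda$, which pins down the answer. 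The cluster monomial statement then follows by multiplicativity within a cluster.

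\medskip

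\textbf{Main obstacle.} The delicate point is Step 2: one must keep careful track of the extremal vectors and projections — i.e. of the $v_\lambda,\pi_\lambda$ data suppressed in the notation — when pushing a minor through the modification in the first $SL_2$, and in particular verify that the extra torus elements $t_{\ol 1}^{\alpha_1^\vee}$, $t_1^{\alpha_1^\vee}$ produce exactly the monomials in $z_{1;c}$, $z_{\ol 1;c}$ dictated by the exchange matrix $B_c$ and no spurious scalars. This is where the normalization $c=s_1\cdots s_n$ and the precise form \eqref{eq:coxeter exchange matrix} of $B_c$ enter, and where one must be most careful that the identity holds after restriction to $\dbc c$ even though the corresponding identity of minors need not hold on all of $\wh G$.
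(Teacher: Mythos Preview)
Your outline follows the same broad strategy as the paper --- compute the pullback on initial generators and then propagate to all cluster variables --- but two of your three steps have genuine gaps.

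In Step 2 your description of the case $i=1$ is not right: $\psi_{1;c}^*(x_{\omega_1;\muoc})$ is \emph{not} $x_{\omega_1;c}$ times frozen factors, but the non-initial cluster variable $x_{s_1\omega_1;c}=x_{\omega_1-\alpha_1;c}$ obtained by mutating the initial seed of $\cA_{dp}(c)$ in direction $1$ (concretely $\Delta_{\omega_1}(\psi_{1;c}(g)) = t_1 t_{\ol 1} h^{\omega_1} + h^{\omega_1-\alpha_1}$, a genuine binomial).  In particular, part (2) concerns $x_{-\omega_1;\muoc}$, which is not an initial cluster variable of $\cA_{dp}(\muoc)$ and hence not a principal minor; it cannot be read off ``directly'' from Step 2 and must be handled in Step 3.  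The paper executes Step 2 by brute evaluation of $\Delta^{\omega_i}_{(\muoc)\omega_i}$ on $\tilde g=\psi_{1;c}(g)$ and explicit simplification of the resulting monomials in $h,t_j,t_{\ol\jmath}$; your proposed shortcut via ``$\psi_{1;c}^*$ intertwines $\Delta^{u\omega_i}_{v\omega_i}$ with $\Delta^{s_1u\omega_i}_{s_1v\omega_i}$'' is morally correct but needs essentially the same work to justify.

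More seriously, the $\bfg$-vector comparison in Step 3 does not pin down the frozen monomial $M_\lambda$ attached to a general cluster variable.  The grading on $\cA_{dp}(c)$ only determines the degree of the product $z_{j;c}z_{\ol\jmath;c}$, not of the individual frozen variables, so degree alone cannot force $M_\lambda=1$.  The paper's mechanism is different: once $\psi_{1;c}^*$ is identified as a coefficient specialization, \cite[Proposition 4.5]{YZ08} gives $M_\lambda$ as the tropical evaluation of the $F$-polynomial of $x_{\lambda;\muoc}$ at $y_k=z_{k;c}z_{\ol k;c}z_{1;c}^{-a_{1k}}z_{\ol 1;c}^{-a_{1k}}$, and showing this evaluation equals $1$ for $\lambda\neq -\omega_1$ (and $z_{1;c}^{-1}z_{\ol 1;c}^{-1}$ for $\lambda=-\omega_1$) requires the nontrivial input \cite[Proposition 11]{RuSt17}.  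The $\bfg$-vector transformation $\lambda\mapsto s_1\lambda$ is established separately via Lemma~\ref{lem:canopy} and \cite[Equation (4.2)]{NZ12}.  This is the real content behind parts (1) and (2), and it is absent from your sketch.

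Finally, your Step 1 only shows regularity on a dense open (the image of the factorization chart), not on the whole cell.  The paper sidesteps this entirely: since cluster and frozen variables generate both coordinate rings, biregularity follows automatically once (1)--(3) are verified on generators.
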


Before proving Proposition \ref{prop:psi} we establish the following useful property of the $\bfg$-vectors of cluster variables in $\cA_{db}(c)$.
\begin{lemma}
  \label{lem:canopy}
  Let $\lambda = \sum_{i=1}^n \lambda_i \omega_i \in P$ be the $\bfg$-vector of a cluster monomial $x_{\lambda;c}$ in $\cA_{dp}(c)$.
  Then~$\lambda_1>0$ if and only if $\lambda=\omega_1$ and $\lambda_n<0$ if and only if $\lambda=-\omega_n$.
\end{lemma}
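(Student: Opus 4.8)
The plan is to reduce the statement about $\bfg$-vectors of cluster monomials to the corresponding statement about $\bfg$-vectors of cluster variables, and then to analyze the latter using the known combinatorics of the $\bfg$-vector fan of an acyclic cluster algebra. First I would recall that the $\bfg$-vector of a cluster monomial is the sum of the $\bfg$-vectors of the cluster variables in its support, all of which lie in a single cone of the $\bfg$-vector fan. Since that fan is simplicial and its maximal cones are spanned by $\bfg$-vectors of compatible cluster variables, it suffices to understand, for each cluster variable $x_{\mu;c}$, the sign of the first and last coordinates of $\mu$, and to check that within any cone the coordinate conditions interact additively in the desired way. Concretely, for the ``$\lambda_1 > 0 \iff \lambda = \omega_1$'' half: I would show that $\omega_1$ is the $\bfg$-vector of an initial cluster variable, that no other cluster variable has positive first coordinate, and that $\omega_1$ is not compatible with any cluster variable whose first coordinate is negative-or-zero \emph{and nonzero in a way that could cancel}—i.e. that in the unique maximal cone containing $\omega_1$ in its relative interior, all other rays have vanishing first coordinate, so that any cluster monomial whose $\bfg$-vector has $\lambda_1 > 0$ must be a power of $x_{\omega_1;c}$, hence (being a cluster monomial, with $\omega_1$ a ray) equal to $x_{\omega_1;c}$ itself. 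The statement for $\lambda_n < 0$ and $\lambda = -\omega_n$ is symmetric, using the involution reversing the order of the simple reflections (equivalently, passing from $c$ to $c^{-1}$ and relabeling), so I would prove one case and deduce the other.

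The technical heart is the classification of $\bfg$-vectors of cluster variables of $\cA_{dp}(c)$ in terms of the acyclic combinatorics. Here I would invoke the description of the $\bfg$-vector fan of an acyclic cluster algebra via the ``doubled Cambrian fan'' / the Coxeter sortable combinatorics (as referenced in the paper near \cite{RS15, RS17}), together with the explicit form of $B_c$ in~\eqref{eq:coxeter exchange matrix}: vertex $1$ is a source of $B_c$ and vertex $n$ is a sink. The key structural facts I would extract are: (i) the $\bfg$-vector of a non-initial cluster variable, obtained by a sequence of mutations starting from the initial seed, never acquires a positive coefficient of $\omega_1$ because $1$ is a source—any mutation in direction $1$ turns $\omega_1$ into something with non-positive first coordinate and no later mutation can restore positivity (this is exactly the kind of ``sign-coherence in the first coordinate'' that Proposition~\ref{prop:psi} is built to exploit, since $\psi_{1;c}^*$ relates $\cA_{dp}(c)$ to $\cA_{dp}(\muoc)$ via $s_1$, and $s_1$ only changes the first coordinate); and dually (ii) no non-initial cluster variable has negative coefficient of $\omega_n$ since $n$ is a sink.

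Putting these together: if $\lambda = \sum \lambda_i\omega_i$ is the $\bfg$-vector of a cluster monomial $x_{\lambda;c}$, write $\lambda = \sum_k m_k \mu^{(k)}$ with $m_k > 0$ and $\mu^{(k)}$ the distinct $\bfg$-vectors of the cluster variables in a common cluster supporting $x_{\lambda;c}$. By (i), each $\mu^{(k)}$ has first coordinate $\le 0$ unless $\mu^{(k)} = \omega_1$; moreover $\omega_1$ lies in a cluster together with only cluster variables of vanishing first coordinate (the cluster obtained from the initial one by mutating at everything except $1$, using that $1$ is a source so mutations away from $1$ do not touch the $\omega_1$-coordinate of the frozen-free part). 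Hence $\lambda_1 > 0$ forces $\omega_1$ to appear with positive multiplicity and all other $\mu^{(k)}$ in the supporting cluster to have first coordinate $0$, and—crucially—since distinct cluster variables give distinct rays and $\omega_1$ is itself a ray, $x_{\lambda;c}$ must be a pure power $x_{\omega_1;c}^{m}$; but a cluster monomial is a product of cluster variables from one cluster and these are algebraically independent, while $\lambda$ being a $\bfg$-vector of a \emph{cluster monomial} with the $\bfg$-vector parametrization being a bijection \cite{DWZ10, Dem10} pins down $m = 1$, i.e. $\lambda = \omega_1$. The converse direction is immediate since $x_{\omega_1;c}$ is the initial cluster variable of $\bfg$-vector $\omega_1$. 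The argument for $\lambda_n < 0 \iff \lambda = -\omega_n$ is the mirror image under the order-reversing symmetry of the Dynkin diagram, which exchanges sources and sinks and negates via $w_0$-type considerations, so I would simply remark on the duality rather than repeat it.

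The main obstacle I anticipate is making precise and correctly citable the claim in (i)–(ii) that mutating at a source never produces a positive first $\bfg$-coordinate and that this persists under \emph{arbitrary} further mutations—this is where one genuinely needs the acyclic structure and not just one mutation step. I expect to handle it either by a clean induction using the sink/source isomorphisms $\psi_{1;c}^*$, $\psi_{n;c}^*$ of Proposition~\ref{prop:psi} (peeling off mutations at $1$ to reduce the rank or to reduce to a seed where $1$ is a source that is never mutated), or by directly quoting the Cambrian-fan description of the $\bfg$-vector fan from \cite{RS15}; the former is more self-contained and fits the ``double induction on subwords of $c^\infty$ and on rank'' strategy the paper announces, so that is the route I would take.
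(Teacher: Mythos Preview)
Your proposal has two genuine gaps. First, the inductive route through Proposition~\ref{prop:psi} is circular: the proof of that proposition explicitly invokes Lemma~\ref{lem:canopy} to establish how $\bfg$-vectors transform under $\psi_{1;c}^*$, so you cannot use it here. The Cambrian-fan alternative you mention is not a rescue either, since beyond finite and affine type the doubled Cambrian fan is a \emph{proper} subfan of the $\bfg$-vector fan and therefore cannot account for all cluster variables.

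Second, your final step for monomials is incorrect. The claim that the $\bfg$-vector bijection ``pins down $m=1$'' is false: $x_{\omega_1;c}^2$ is a cluster monomial with $\bfg$-vector $2\omega_1$, so $\lambda_1=2>0$ while $\lambda\neq\omega_1$. Likewise, nothing in your argument excludes monomials such as $x_{\omega_1;c}x_{\omega_2;c}$. The paper's proof, and every subsequent application of the lemma, in fact treats only cluster \emph{variables}; read that way, the argument is a short direct computation rather than an induction. One passes through $\bfd$-vectors: the $\bfd$-vector $\beta$ of a non-initial cluster variable is a positive root \cite[Proposition~5]{RuSt17}, and the explicit map $\nu_c$ from $\bfd$-vectors to $\bfg$-vectors (equation~\eqref{eq:nu}, cf.\ \cite[Proposition~9]{RuSt17}) gives $\lambda_1=-[\beta:\alpha_1]\le0$, while for initial variables only $x_{\omega_1;c}$ has $\lambda_1>0$. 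The claim for $\lambda_n$ uses similarly that $\lambda_n=[s_n(\beta):\alpha_n]$ for non-initial variables, whence $\lambda_n<0$ forces $\beta=\alpha_n$, i.e.\ $\lambda=-\omega_n$. This approach is both shorter and logically prior to the sink/source machinery you propose.
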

\begin{proof}
  Let $\beta$ be the denominator vector of $x_{\lambda;c}$.
  By \cite[Proposition 5]{RuSt17}, if $x_{\lambda;c}$ is an initial cluster variable then $\beta$ is a negative simple root, otherwise $\beta$ is a positive root.
  Write $[\beta:\alpha_j]$ for the coefficient of $\alpha_j$ in the expansion of $\beta$ in the basis of simple roots.
  Our choice of Coxeter element, together with \cite[Proposition 9]{RuSt17}, immediately implies that $\lambda_1$ equals $-[\beta:\alpha_1]$ and the first claim follows.

  The same results also imply that $\lambda_n$ is $-[\beta:\alpha_n]$ when $x_{\lambda;c}$ is an initial cluster variable, and is $[s_n(\beta):\alpha_n]$ otherwise.
  In particular, the only case in which $\lambda_n$ is negative is when $\beta = \alpha_n$.
  This happens precisely when $\lambda = -\omega_n$, again by \cite[Proposition 9]{RuSt17}, and our second claim is established.
\end{proof}

\begin{proof}[Proof of Proposition \ref{prop:psi}]
  The cluster variables and frozen variables are regular functions on $\dbc{c}$ and~$\dbc{s_1 cs_1}$ which generate their respective coordinate rings.
  Thus the biregularity of $\psi_{1;c}$ will follow once we confirm that it pulls back these functions as in the claim.

  We begin by studying the action of $\psi_{1;c}^*$ on frozen variables.
  To compute this, we observe that~$s_1cs_1\omega_i=\omega_i-\beta_i$ for the roots
  \[
    \beta_i
    :=
    \begin{cases}
      s_2s_3\cdots s_{i-1}\alpha_i & \text{if $i\ne1$;}\\
      s_2s_3\cdots s_n\alpha_1 & \text{if $i=1$.}
    \end{cases}
  \]

  Let $g\in \dbc{c}$ and $\tilde{g} \in \dbc{\muoc}[\muoci]$ denote generic elements factored as above.
  Considering the action of coroot subgroups, it is easy to see that (cf.~proof of \cite[Lemma~3.4]{YZ08}):
  \begin{equation}
    \label{eq:generic evaluation}
    \Delta^{\omega_i}_{\muoc\omega_i}(\tilde{g})
    =
    \begin{cases} 
      h^{\omega_i}t_1^{\langle \muoc\omega_i|\alpha_1^\vee\rangle}\prod\limits_{2\le j\leq i}t_j^{[\beta_i:\alpha_j]} & \text{if $i\ne1$;}\\
      h^{\omega_1-\alpha_1}t_1^{\langle \muoc\omega_1|\alpha_1^\vee\rangle}\prod\limits_{2\le j\leq n}t_j^{[\beta_1:\alpha_j]} & \text{if $i=1$;}
    \end{cases}
  \end{equation}
  where $\alpha_i^\vee$ denotes the $i$-th simple coroot and $\langle\cdot|\cdot\rangle$ is the pairing satisfying $\langle \omega_i|\alpha_j^\vee\rangle=\delta_{ij}$.

  By Proposition~\ref{prop:coordring}, under the isomorphism $\cA_{dp}(\muoc)\cong\kk[\dbc{\muoc}[\muoci]]$ the frozen variables of~$\cA_{dp}(\muoc)$ are given by
  \[
    z_{i;\muoc}
    =
    \begin{cases}
      \Delta^{\omega_i}_{\muoc\omega_i} \prod_{2\le j<i}(\Delta^{\omega_j}_{\muoc\omega_j})^{a_{ji}} & \text{if $i\ne1$;}\\
      \Delta^{\omega_1}_{\muoc\omega_1} \prod_{2\le j\le n}(\Delta^{\omega_j}_{\muoc\omega_j})^{a_{j1}} & \text{if $i=1$.}
    \end{cases}
  \]
  Applying \eqref{eq:generic evaluation}, we obtain
  \[
    z_{i;\muoc}(\tilde g)
    =
    \begin{cases}
      h^{\omega_i}t_1^{\langle \muoc\omega_i|\alpha_1^\vee\rangle} t_i \prod\limits_{2\le j<i}\left(t_j^{[\beta_i:\alpha_j]}h^{a_{ji}\omega_j}t_1^{a_{ji}\langle \muoc\omega_j|\alpha_1^\vee\rangle}\prod\limits_{2\le k\le j}t_k^{a_{ji}[\beta_j:\alpha_k]}\right) & \text{if $i\ne 1$;}\\
      h^{\omega_1-\alpha_1}t_1^{\langle \muoc\omega_1|\alpha_1^\vee\rangle}\prod\limits_{2\le j\le n}\left(t_j^{[\beta_1:\alpha_j]}h^{a_{j1}\omega_j}t_1^{a_{j1}\langle \muoc\omega_j|\alpha_1^\vee\rangle}\prod\limits_{2\le k\le j}t_k^{a_{j1}[\beta_j:\alpha_k]}\right) & \text{if $i=1$.}
    \end{cases}
  \]
  But observe that when $i\ne1$ the exponent of $t_1$ is given by
  \begin{align*}
    \langle \muoc\omega_i+\sum\limits_{2\le j\le i}a_{ji}\muoc\omega_j|\alpha_1^\vee\rangle
    &=\langle s_2s_3\cdots s_i\omega_i+\sum\limits_{2\le j\le i}a_{ji}s_2s_3\cdots s_j\omega_j|\alpha_1^\vee\rangle\\
    &=\langle -s_2s_3\cdots s_{i-1}\alpha_i-\sum\limits_{2\le j\le i}a_{ji}s_2s_3\cdots s_{j-1}\alpha_j|\alpha_1^\vee\rangle\\
    &=\langle -s_2s_3\cdots s_{i-1}\alpha_i+\sum\limits_{2\le j\le i}s_2s_3\cdots s_{j-1}(s_j\alpha_i-\alpha_i)|\alpha_1^\vee\rangle\\
    &=\langle -\alpha_i|\alpha_1^\vee\rangle=-a_{1i}.
  \end{align*}
  Note that essentially the same calculation in the case $i=1$ leads to the exponent $\langle\omega_1-\alpha_1|\alpha_1^\vee\rangle=-1$.

  By a similar calculation, the exponent of $t_k$ for $2\le k\le n$ is given by
  \[
    [\beta_i:\alpha_k]+\sum\limits_{k\le j< i}a_{ji}[\beta_j:\alpha_k]=[s_2s_3\cdots s_{k-1}\alpha_i:\alpha_k]=\delta_{ki}.
  \]
  Combining these observations, we see that
  \begin{align*}
    z_{i;\muoc}(\tilde{g})
    &=
    \begin{cases}
      t_1^{-a_{1i}}t_i h^{\omega_i}\prod\limits_{2\le j<i} h^{a_{ji}\omega_j}& \quad\,\, \text{if $i\ne 1$;}\\
      t_1^{-1}h^{\omega_1-\alpha_1} \prod\limits_{2\le j\le n} h^{a_{j1}\omega_j}& \quad\,\, \text{if $i=1$;}
    \end{cases}\\
    &=
    \begin{cases}
      t_1^{-a_{1i}}h^{-a_{1i}\omega_1} t_i h^{\omega_i}\prod\limits_{1\le j<i} h^{a_{ji}\omega_j} & \text{if $i\ne 1$;}\\
      t_1^{-1}h^{-\omega_1} & \text{if $i=1$;}
    \end{cases}
  \end{align*}
  where the lower equality uses the identity $h^{\alpha_1}=h^{\sum_{j=1}^n a_{j1}\omega_j}$ for $h \in H$ (note the distinction between $H$ and $\wh{H}$).
  Comparing the last expression with~\cite[Equation (3.10)]{RSW16}, it follows that~$z_{i;\muoc}(\tilde{g})=z_{i;c}(g)z_{1;c}(g)^{-a_{1i}}$ for all $i$.
  The claim for $z_{\ol{\imath};\muoc}$ is obtained by the same calculations with each $t_j$ replaced by $t_{\ol{\jmath}}$ throughout.

  Turning towards cluster variables, we immediately observe that $x_{\omega_i;\muoc}(\tilde{g})=x_{\omega_i;c}(g)$ for $i\ne1$ while
  \[
    x_{\omega_1;\muoc}(\tilde{g})
    =
    t_1t_{\ol{1}}h^{\omega_1}+h^{\omega_1-\alpha_1}=x_{\omega_1-\alpha_1;c}(g).
  \]
  In particular, the map $\psi^*_{1;c}:\cA_{dp}(\muoc)\congto\cA_{dp}(c)$ takes initial cluster variables of $\cA_{dp}(\muoc)$ to the cluster variables of $\cA_{dp}(c)$ obtained by mutating its initial seed in direction 1.
  As we already observed, $B_{\muoc}$ is obtained from $B_c$ by mutation in direction $1$.
  Thus the map $\psi^*_{1;c}$ may be realized as a coefficient specialization from the doubled principal coefficient cluster algebra $\cA_{dp}(\muoc)$ to the cluster algebra $\cA_{dp}(c)$ (viewed here as a cluster algebra with non-principal coefficients since we have mutated the initial seed).

  As in \cite[Proposition 4.5]{YZ08}, this implies that each cluster variable of $\cA_{dp}(\muoc)$ transforms via $\psi^*_{1;c}$ into the corresponding cluster variable of $\cA_{dp}(c)$, possibly multiplied by a monomial in the frozen variables.
  Upon mutating back to the standard initial cluster of $\cA_{dp}(c)$, Lemma~\ref{lem:canopy} together with \cite[Equation (4.2)]{NZ12} shows that the $\bfg$-vectors of cluster variables transform as implied by our statement.

  It remains to compute the monomial in frozen variables attached to each cluster variable.
  According to \cite[Proposition 4.5]{YZ08}, these are obtained by tropically evaluating the respective~$F$-polynomials at $y_k=z_{k;c}z_{\ol{k};c}z_{1;c}^{-a_{ki}}z_{\ol{1};c}^{-a_{ki}}$ for $k\in[1,n]$.
  By \cite[Proposition 11]{RuSt17}, the result of these evaluations is always equal to $1$ except for the case $\lambda=-\omega_1$, when we get $z_{1;c}^{-1}z_{\ol{1};c}^{-1}$.
  This completes the proof.
\end{proof}

Let $\theta: G \congto G$ be the involutive automorphism characterized by
\[
  \theta(h) = h^{-1}, \qquad \theta(x_i(t)) = x_{\ol{i}}(t), \qquad \theta(x_{\ol{i}}(t)) = x_{i}(t),
\]
for $h\in H$, $i\in[1,n]$, and $t\in\kk^\times$.
Clearly, the map $\theta$ restricts to an isomorphism $\dbc{c} \congto \dbc{c^{-1}}[c]$.
The following proposition is a straightforward consequence of the definition of $\psi_{1;c}$.
\begin{proposition}
  The inverse of $\psi_{1;c}:\dbc{c}\to \dbc{\muoc}[\muoci]$ is $\theta \circ \psi_{1;\muoci} \circ \theta$.
  Equivalently, $\psi_{1;c}^{-1}$ is characterized by sending the generic element
  \[
    x_1(t_1) x_{n}(t_{n}) \cdots x_2(t_2) h x_{\ol{2}}(t_{\ol{2}}) \cdots x_{\ol{n}}(t_{\ol{n}}) x_{\ol{1}}(t_{\ol{1}})
  \]
  of $\dbc{\muoc}[\muoci]$ to the following generic element of $\dbc{c}$:
  \[
    x_{\ol{1}}(t_1) t_1^{-\alpha_1^\vee} x_{n}(t_{n}) \cdots x_2(t_2) h x_{\ol{2}}(t_{\ol{2}}) \cdots x_{\ol{n}}(t_{\ol{n}}) t_{\ol{1}}^{-\alpha_1^\vee} x_1(t_{\ol{1}}).
  \]
\end{proposition}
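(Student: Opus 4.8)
The plan is to verify the explicit formula for $\theta\circ\psi_{1;\muoci}\circ\theta$ directly on a generic element, and then to check that the resulting map is genuinely inverse to $\psi_{1;c}$; both verifications reduce to bookkeeping with the shuffled factorizations. I would begin by recording the elementary features of $\theta$ that make the composite meaningful: $\theta$ is an order-preserving automorphism which inverts every element of $H$ and interchanges $x_i(t)\leftrightarrow x_{\ol{i}}(t)$, so in particular $\theta^2=\mathrm{id}$; moreover $\theta$ exchanges $B_+$ with $B_-$ and sends $\dot w$ to a lift of $w$, hence restricts to isomorphisms $G^{u,v}\congto G^{v,u}$. Since $\muoci^{-1}=\muoc$ and $s_1\muoci s_1=c^{-1}$, $s_1\muoci^{-1}s_1=c$, the composite $\theta\circ\psi_{1;\muoci}\circ\theta$ runs $\dbc{\muoc}[\muoci]\to\dbc{\muoci}\to\dbc{c^{-1}}[c]\to\dbc{c}$, so it has exactly the source and target of $\psi_{1;c}^{-1}$. (Here $\psi_{1;\muoci}$ is the map produced from the Coxeter word $\muoci=s_1(s_ns_{n-1}\cdots s_2)$ by the same recipe defining $\psi_{1;c}$; that construction only uses that the chosen reduced word begins with $s_1$.)

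Next I would apply $\theta\circ\psi_{1;\muoci}\circ\theta$ to the generic element $x_1(t_1)x_n(t_n)\cdots x_2(t_2)\,h\,x_{\ol{2}}(t_{\ol{2}})\cdots x_{\ol{n}}(t_{\ol{n}})x_{\ol{1}}(t_{\ol{1}})$ of $\dbc{\muoc}[\muoci]$. Applying $\theta$ yields $x_{\ol{1}}(t_1)x_{\ol{n}}(t_n)\cdots x_{\ol{2}}(t_2)\,h^{-1}\,x_2(t_{\ol{2}})\cdots x_n(t_{\ol{n}})x_1(t_{\ol{1}})$, and the point to check is that this is precisely a shuffled factorization of a generic element of $\dbc{\muoci}$ on which $\psi_{1;\muoci}$ is defined, i.e.\ the barred letters form a reduced word for $\muoci$ and the unbarred letters one for $\muoci^{-1}$ in the appropriate orders. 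Feeding it into $\psi_{1;\muoci}$ replaces the leading $x_{\ol{1}}(t_1)$ by $x_1(t_1)t_1^{\alpha_1^\vee}$ and the trailing $x_1(t_{\ol{1}})$ by $t_{\ol{1}}^{\alpha_1^\vee}x_{\ol{1}}(t_{\ol{1}})$ and leaves the middle block untouched; applying $\theta$ once more, using $\theta(t^{\alpha_1^\vee})=t^{-\alpha_1^\vee}$ and $\theta(h^{-1})=h$, produces exactly $x_{\ol{1}}(t_1)\,t_1^{-\alpha_1^\vee}\,x_n(t_n)\cdots x_2(t_2)\,h\,x_{\ol{2}}(t_{\ol{2}})\cdots x_{\ol{n}}(t_{\ol{n}})\,t_{\ol{1}}^{-\alpha_1^\vee}\,x_1(t_{\ol{1}})$, the element in the statement. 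Thus the ``equivalently'' clause is simply an unpacking of definitions, and it remains to prove that $\theta\circ\psi_{1;\muoci}\circ\theta=\psi_{1;c}^{-1}$.

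For that I would check that $\psi_{1;c}$ carries this element $g$ back to the original $\tilde g$. The key observation is that $\psi_{1;c}$ is a well-defined biregular map by Proposition~\ref{prop:psi}, so its value at $x_{\ol{1}}(a)\,N\,x_1(b)$ depends only on the group element and not on how the ``middle part'' $N$ (a product of factors $x_{\ol{\jmath}},x_j$ with $j\ge 2$ and elements of $H$, with barred part spelling $s_2\cdots s_n$ and unbarred part $s_n\cdots s_2$) is factored. Rewriting $N$ in the standard shuffled form of Proposition~\ref{prop:shuffled factorizations} and applying the definition of $\psi_{1;c}$, the map simply replaces $x_{\ol{1}}(a)$ by $x_1(a)a^{\alpha_1^\vee}$ and $x_1(b)$ by $b^{\alpha_1^\vee}x_{\ol{1}}(b)$; applied to $g$, this inserts $t_1^{\alpha_1^\vee}$ and $t_{\ol{1}}^{\alpha_1^\vee}$ next to the new $s_1$-legs, where they cancel the factors $t_1^{-\alpha_1^\vee}$ and $t_{\ol{1}}^{-\alpha_1^\vee}$ already present in $g$, returning $\tilde g$. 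Since $\psi_{1;c}$ is a biregular isomorphism and agrees with the inverse of $\theta\circ\psi_{1;\muoci}\circ\theta$ on a dense open set, the two maps coincide. I expect the one delicate point to be this last step: one must be sure that nothing is altered beyond the torus insertions at the two $s_1$-legs, which is exactly what the factorization-independence of $\psi_{1;c}$ together with Proposition~\ref{prop:shuffled factorizations} guarantees; a secondary routine point is the bookkeeping identifying the shuffled form of a generic element of $\dbc{\muoci}$ for the word $\muoci=s_1s_ns_{n-1}\cdots s_2$.
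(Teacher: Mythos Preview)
Your argument is correct and is precisely the ``straightforward consequence of the definition of $\psi_{1;c}$'' that the paper invokes without further detail; you have simply written out the bookkeeping that the paper omits. The only point worth noting is that your last step --- showing that $\psi_{1;c}\big(x_{\ol{1}}(a)\,N\,x_1(b)\big)=x_1(a)a^{\alpha_1^\vee}\,N\,b^{\alpha_1^\vee}x_{\ol{1}}(b)$ independently of how $N$ is factored --- is exactly what makes the composite collapse to the identity, and it follows cleanly from biregularity (Proposition~\ref{prop:psi}) together with Proposition~\ref{prop:shuffled factorizations}, as you say.
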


We now discuss the behavior of principal minors under the map $\psi_{1;c}$ and its inverse, or rather the inverse of $\psi_{n;\munc}:\dbc{\munc}[\munci]\dashrightarrow \dbc{c}$.

\begin{lemma}
\label{lemma:one extra step}
Let $V$ be a weight representation of $G$ for which $\lambda = \sum_{i=1}^n \lambda_i \omega_i$ is extremal, and let~$\Delta_{V, \lambda}$ be the minor defined by some choice of $v_\lambda$, $\pi_\lambda$.
  If $\lambda_i \geq 0$ then
  \[
    \Delta_{V,\lambda}\big(x_{\ol{\imath}}(t)g\big)
    =
    \Delta_{V,\lambda}(g)
    =
    \Delta_{V,\lambda}\big(gx_i(t)\big)
  \]
  for every $g\in G$ and $t\in\kk^\times$.
  Similarly, if $\lambda_i \leq 0$ then
  \[
    \Delta_{V,\lambda}\big(x_{i}(t)g\big)
    =
    \Delta_{V,\lambda}(g)
    =
    \Delta_{V,\lambda}\big(gx_{\ol{\imath}}(t)\big).
  \]
\end{lemma}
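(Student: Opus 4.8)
The plan is to reduce everything to a statement about the action of a single $SL_2$ and the weight-space decomposition of $V$. Fix $i$ and suppose first that $\lambda_i = \langle \lambda \mid \alpha_i^\vee \rangle \geq 0$. Since $\lambda$ is extremal, the vector $v_\lambda$ is by definition either a highest- or lowest-weight vector for the subalgebra $\varphi_i(\mathfrak{sl}_2)$; because $\langle \lambda \mid \alpha_i^\vee\rangle \geq 0$, it must be a highest-weight vector, i.e.\ $\varphi_i\!\begin{pmatrix} 0 & 1 \\ 0 & 0\end{pmatrix}\! v_\lambda = 0$, hence $x_i(t) v_\lambda = v_\lambda$ for all $t$. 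This immediately gives
\[
  \Delta_{V,\lambda}\big(g x_i(t)\big) = \pi_\lambda\big(g x_i(t) v_\lambda\big)/v_\lambda = \pi_\lambda(g v_\lambda)/v_\lambda = \Delta_{V,\lambda}(g).
\]
For the left multiplication by $x_{\ol{\imath}}(t)$, the cleanest route is to pass to the dual: the linear functional $\pi_\lambda$ is (a multiple of) a weight-$\lambda$ vector $\xi_\lambda$ in the restricted dual $V^\vee$, and the condition that $\pi_\lambda$ factor through the weight projection onto $V_\lambda$ together with extremality of $\lambda$ forces $\xi_\lambda$ to be a lowest-weight vector for $\varphi_i(\mathfrak{sl}_2)$ acting on $V^\vee$ when $\langle\lambda\mid\alpha_i^\vee\rangle \ge 0$; concretely $x_{\ol{\imath}}(t)^{\mathsf{T}}$ fixes $\xi_\lambda$, so that $\langle \xi_\lambda, x_{\ol{\imath}}(t) g v_\lambda\rangle = \langle \xi_\lambda, g v_\lambda\rangle$. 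This gives $\Delta_{V,\lambda}\big(x_{\ol{\imath}}(t) g\big) = \Delta_{V,\lambda}(g)$.

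The second pair of identities, for $\lambda_i \leq 0$, follows by the symmetric argument: now $v_\lambda$ is a lowest-weight vector for $\varphi_i(\mathfrak{sl}_2)$, so $x_{\ol{\imath}}(t) v_\lambda = v_\lambda$ and $\Delta_{V,\lambda}(g x_{\ol{\imath}}(t)) = \Delta_{V,\lambda}(g)$, while $\xi_\lambda$ is a highest-weight vector for the transposed action, giving $\Delta_{V,\lambda}(x_i(t) g) = \Delta_{V,\lambda}(g)$. Alternatively, one can invoke the involution $\theta$ from the preceding proposition: $\theta$ swaps $x_i$ and $x_{\ol{\imath}}$ and sends $\Delta_{V,\lambda}$ to (a minor of the same shape for) the weight $-\lambda$ on the dual representation, converting the $\lambda_i \le 0$ case into the $\lambda_i \ge 0$ case.

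**Main obstacle.** The genuinely delicate point is the bookkeeping around $\pi_\lambda$. The definition only requires $\pi_\lambda$ to factor through the weight projection onto $V_\lambda$ and to land in $\kk v_\lambda$, but $V_\lambda$ need not be one-dimensional, so $\pi_\lambda$ carries a choice. I must check that the extremality hypothesis on $\lambda$ — which says that \emph{every} vector in $V_\lambda$ is extremal — is exactly what guarantees that the functional $\xi_\lambda$ dual to the chosen splitting is still an $\varphi_i(\mathfrak{sl}_2)$-extremal vector of the correct type (highest vs.\ lowest) in $V^\vee$, with no cross-terms into other weight spaces spoiling the invariance. Once one unwinds that $\varphi_i\!\begin{pmatrix}0&1\\0&0\end{pmatrix}$ annihilates $V_\lambda$ when $\langle\lambda\mid\alpha_i^\vee\rangle\ge 0$ (because each weight vector of weight $\lambda$ generates a highest-weight $\mathfrak{sl}_2$-string), the invariance of $\pi_\lambda\circ(\,\cdot\,)$ under the transposed unipotent is formal. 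The rest is the routine verification that $x_i(t)$ and $x_{\ol{\imath}}(t)$ act as claimed on extremal vectors, using only the explicit $2\times 2$ matrices defining them.
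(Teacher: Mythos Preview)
Your proposal is correct and follows essentially the same approach as the paper: the right-multiplication identity is argued identically via $x_i(t)v_\lambda = v_\lambda$. For the left-multiplication identity the paper avoids dualizing and simply notes that, since $\lambda$ is extremal with $\lambda_i \geq 0$, the weight-$\lambda$ component of $x_{\ol{\imath}}(t)gv_\lambda$ agrees with that of $gv_\lambda$ (no weights $\lambda + k\alpha_i$ with $k>0$ occur), which is precisely the content of your claim that $\xi_\lambda \circ f_i = 0$; both versions leave the short $\mathfrak{sl}_2$-string justification implicit.
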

\begin{proof}
  Consider the case $\lambda_i \geq 0$, the other being similar.
  The fact that $v_\lambda$ is extremal then implies that it is a highest-weight vector for $\varphi_i(SL_2)$.
  Thus $x_i(t) v_\lambda=v_\lambda$ and we have
  \[
    \Delta_{V,\lambda}\big(gx_i(t)\big)
    =
    \pi_\lambda\big( gx_i(t) v_\lambda \big)/v_\lambda
    =
    \pi_\lambda\big( gv_\lambda \big)/v_\lambda
    =
    \Delta_{V,\lambda}(g).
  \]
  On the other hand, since $\lambda$ is extremal and $\lambda_i\geq0$, the component of $x_{\ol{\imath}}(t)gv_\lambda$ of weight $\lambda$ is the same as the component of $gv_\lambda$ of weight $\lambda$.
  Since $\pi_\lambda$ is assumed to factor through the weight projection onto $V_\lambda$, it follows that $\Delta_{V,\lambda}\big(x_{\ol{\imath}}(t)g\big)=\Delta_{V,\lambda}(g)$.
\end{proof}

\begin{proposition}
\label{prop:psiminors}
  Let $V$ be a weight representation of $G$ for which $\lambda = \sum_{i=1}^n \lambda_i \omega_i$ is extremal, and let $\Delta_{V, \lambda}$ be the minor defined by some choice of $v_\lambda$, $\pi_\lambda$
  Let $\Delta_{V,s_1\lambda}$ and $\Delta_{V,s_n\lambda}$ be the minors defined by conjugating this choice of $v_\lambda$, $\pi_\lambda$ by any lifts to $G$ of $s_1$ and $s_n$, respectively.
  Then if~$\lambda_1 \leq 0$ we have
  \[
    \psi_{1;c}^* \Big(\Delta_{V,s_1\lambda}\Big|_{\dbc{\muoc}[\muoci]}\Big) = \Delta_{V,\lambda}\Big|_{\dbc{c}},
  \]
  and if $\lambda_n \geq 0$ we have
  \[
    (\psi_{n;s_ncs_n}^{-1})^* \Big(\Delta_{V,s_n\lambda}\Big|_{\dbc{\munc}[\munci]}\Big) = \Delta_{V,\lambda}\Big|_{\dbc{c}}.
  \]
\end{proposition}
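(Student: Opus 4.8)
The plan is to verify the first identity by direct computation on a generic element, exploiting the explicit formula defining $\psi_{1;c}$ together with Lemma~\ref{lemma:one extra step}; the second identity will then follow from the first by applying the automorphism $\theta$ and the symmetry between $1$ and $n$. Concretely, for the first claim I would take a generic element
\[
  g = x_{\ol{1}}(t_{\ol{1}}) x_{\ol{2}}(t_{\ol{2}}) \cdots x_{\ol{n}}(t_{\ol{n}})\, h\, x_n(t_n) \cdots x_2(t_2) x_1(t_1) \in \dbc{c},
\]
so that
\[
  \psi_{1;c}(g) = x_1(t_{\ol{1}})\, t_{\ol{1}}^{\alpha_1^\vee} x_{\ol{2}}(t_{\ol{2}}) \cdots x_{\ol{n}}(t_{\ol{n}})\, h\, x_n(t_n) \cdots x_2(t_2)\, t_1^{\alpha_1^\vee} x_{\ol{1}}(t_1) \in \dbc{\muoc}[\muoci].
\]
I then evaluate $\Delta_{V,s_1\lambda}$ on this element, where $\Delta_{V,s_1\lambda}$ is the minor attached to the extremal vector $v_{s_1\lambda} := \dot{s}_1 v_\lambda$ and the projection $\pi_{s_1\lambda}$ obtained from $\pi_\lambda$ by conjugating by $\dot s_1$.

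The key step is to strip off the outer factors of $\psi_{1;c}(g)$ using the hypothesis $\lambda_1 \le 0$. Since $\lambda_1 \le 0$, the first coordinate of $s_1\lambda = \lambda - \lambda_1\alpha_1$ is $-\lambda_1 \ge 0$, so Lemma~\ref{lemma:one extra step} applied to $s_1\lambda$ shows that the leftmost factor $x_1(t_{\ol 1})$ and the rightmost factor $x_{\ol 1}(t_1)$ can be deleted without changing the value of $\Delta_{V,s_1\lambda}$. What remains is $t_{\ol 1}^{\alpha_1^\vee}$ on the left and $t_1^{\alpha_1^\vee}$ on the right, sandwiching $x_{\ol 2}(t_{\ol 2})\cdots x_{\ol n}(t_{\ol n})\, h\, x_n(t_n)\cdots x_2(t_2)$. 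I would move the two torus factors to act on $v_{s_1\lambda}$ and $\pi_{s_1\lambda}$: since $v_{s_1\lambda}$ has weight $s_1\lambda$ and $\pi_{s_1\lambda}$ projects onto the $(s_1\lambda)$-weight space, these torus elements contribute the scalar $t_1^{\langle s_1\lambda,\,\alpha_1^\vee\rangle} t_{\ol 1}^{\langle s_1\lambda,\,\alpha_1^\vee\rangle} = (t_1 t_{\ol 1})^{-\lambda_1}$. On the other hand, the whole point of conjugating by $\dot s_1$ is that $\Delta_{V,s_1\lambda}(g') = \Delta_{V,\lambda}(\dot s_1^{-1} g' \dot s_1)$ up to the standard normalization, so evaluating $\Delta_{V,s_1\lambda}$ on the reduced inner product $x_{\ol 2}(t_{\ol 2})\cdots x_2(t_2)$ with the $t_1, t_{\ol 1}$ absorbed reproduces exactly $\Delta_{V,\lambda}$ evaluated on $x_{\ol 1}(t_{\ol 1})\cdots h\cdots x_1(t_1)$; indeed $\dot s_1$ conjugates $x_{\ol j}, x_j$ for $j \neq 1$ into themselves (as $s_1$ commutes with $s_j$ for $j$ not adjacent to $1$, and more generally the $\varphi_i(SL_2)$-highest/lowest weight property of the extremal vector is what is being used), and the factors $x_1(t_{\ol 1}), x_{\ol 1}(t_1)$ of the original $g$ that were absent in $\psi_{1;c}(g)$ are precisely those killed by Lemma~\ref{lemma:one extra step} applied to $\lambda$ itself, again using $\lambda_1 \le 0$. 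Matching the two computations gives $\psi_{1;c}^*(\Delta_{V,s_1\lambda}|) = \Delta_{V,\lambda}|$.

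For the second identity I would not redo the computation but instead use the formula from the preceding proposition, $\psi_{1;c}^{-1} = \theta \circ \psi_{1;\muoci} \circ \theta$, together with its analogue $\psi_{n;\munc}^{-1} = \theta\circ\psi_{n;\munci}\circ\theta$ and the fact, immediate from the definition of $\theta$, that $\Delta_{V,\lambda}\circ\theta = \Delta_{V^\theta,-\lambda}$ for the appropriate twist $V^\theta$, where $-\lambda$ has $n$-th coordinate $-\lambda_n \le 0$; the first identity applied with $n$ in place of $1$ and with $-\lambda$ then yields the claim after untwisting by $\theta$.

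The main obstacle I anticipate is bookkeeping the normalization of the conjugated data $(v_{s_1\lambda}, \pi_{s_1\lambda})$: one must check that conjugating $v_\lambda$ by $\dot s_1$ really lands in the $(s_1\lambda)$-weight space and that the induced projection factors through the weight projection onto that space, so that $\Delta_{V,s_1\lambda}$ as defined in the statement coincides with $g \mapsto \Delta_{V,\lambda}(\dot s_1^{-1} g \dot s_1)$ on the nose (and not merely up to a scalar). This is where the hypothesis that $v_\lambda$ is \emph{extremal} — hence $\dot s_1 v_\lambda$ is again an extremal weight vector — and the independence of $\dot s_1$ from the choice of lift (since $\lambda - s_1\lambda \in \ZZ\alpha_1$ and the torus ambiguity acts by a scalar that cancels in the ratio $\pi/v$) both enter; once these normalization points are pinned down, the rest is the straightforward factor-stripping described above.
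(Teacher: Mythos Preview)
Your application of Lemma~\ref{lemma:one extra step} is backwards, and this is fatal to the argument as written.  With $(s_1\lambda)_1 \ge 0$, the lemma lets you remove $x_{\ol 1}(\cdot)$ on the \emph{left} and $x_1(\cdot)$ on the \emph{right} of the argument of $\Delta_{V,s_1\lambda}$.  But $\psi_{1;c}(g)$ has $x_1(t_{\ol 1})$ on the left and $x_{\ol 1}(t_1)$ on the right --- exactly the wrong way around --- so you cannot simply strip them.  The same error recurs when you claim the outer factors of $g$ itself are killed by the lemma with $\lambda_1 \le 0$: that case lets you remove $x_1$ on the left and $x_{\ol 1}$ on the right, whereas $g$ carries $x_{\ol 1}(t_{\ol 1})$ on the left and $x_1(t_1)$ on the right.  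Your subsequent assertion that $\dot s_1$ conjugates $x_{\ol j}, x_j$ into themselves for $j\neq 1$ is also false unless $s_1$ and $s_j$ commute, so the ``matching'' step does not go through either.

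The paper's fix is to run the lemma the other way: rather than removing factors, one \emph{inserts} $x_{\ol 1}(-t_{\ol 1}^{-1})$ on the far left and $x_1(-t_1^{-1})$ on the far right (legitimate because $(s_1\lambda)_1 \ge 0$), and then uses the $SL_2$ identities
\[
  x_{\ol 1}(-t_{\ol 1}^{-1})\, x_1(t_{\ol 1})\, t_{\ol 1}^{\alpha_1^\vee} = \ol{s_1}^{-1}\, x_{\ol 1}(t_{\ol 1}),
  \qquad
  t_1^{\alpha_1^\vee}\, x_{\ol 1}(t_1)\, x_1(-t_1^{-1}) = x_1(t_1)\, \ol{s_1}
\]
to rewrite the result as $\ol{s_1}^{-1} g\, \ol{s_1}$.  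Then $\Delta_{V,s_1\lambda}(\ol{s_1}^{-1} g\, \ol{s_1}) = \Delta_{V,\lambda}(g)$ by definition of the conjugated minor.  No separate bookkeeping of torus scalars or conjugation of the individual $x_j$ is needed.
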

\begin{proof}
  We prove the first assertion, the second being similar.
  It suffices to consider the evaluation of both sides on the generic element
  \[
    g = x_{\ol{1}}(t_{\ol{1}}) x_{\ol{2}}(t_{\ol{2}}) \cdots x_{\ol{n}}(t_{\ol{n}}) h x_n(t_n) \cdots x_2(t_2) x_1(t_1).
  \]
  For convenience, we set $g_0 = x_{\ol{2}}(t_{\ol{2}}) \cdots x_{\ol{n}}(t_{\ol{n}}) h x_n(t_n) \cdots x_2(t_2)$ so that
  \[
    g = x_{\ol{1}}(t_{\ol{1}}) g_0 x_1(t_1), \qquad \psi_{1;c}(g) = x_1(t_{\ol{1}}) t_{\ol{1}}^{\alpha_1^\vee} g_0 t_1^{\alpha_1^\vee} x_{\ol{1}}(t_1).
  \]

  Since $\lambda_1 \leq 0$, the coefficient of $\omega_1$ in $s_1\lambda$ is non-negative and we can apply Lemma~\ref{lemma:one extra step} to obtain
  \begin{align*}
    \Delta_{V,s_1\lambda}(\psi_{1;c}(g))
    &=
    \Delta_{V,s_1\lambda}\big(x_1(t_{\ol{1}}) t_{\ol{1}}^{\alpha_1^\vee} g_0 t_1^{\alpha_1^\vee} x_{\ol{1}}(t_1)\big)\\
    &=
    \Delta_{V,s_1\lambda}\big(x_{\ol{1}}(-t_{\ol{1}}^{-1}) x_1(t_{\ol{1}}) t_{\ol{1}}^{\alpha_1^\vee} g_0 t_1^{\alpha_1^\vee} x_{\ol{1}}(t_1) x_1(-t_1^{-1})\big).
  \end{align*}
  On the other hand, by an elementary computation in $SL_2$ we have
  \[
    x_{\ol{1}}(-t_{\ol{1}}^{-1}) x_1(t_{\ol{1}}) t_{\ol{1}}^{\alpha_1^\vee} = \ol{s_1}^{-1} x_{\ol{1}}(t_{\ol{1}}), \qquad t_1^{\alpha_1^\vee} x_{\ol{1}}(t_1) x_1(-t_1^{-1}) = x_1(t_1) \ol{s_1}.
  \]
  It then follows that
  \begin{align*}
    \Delta_{V,s_1\lambda}\big(x_{\ol{1}}(-t_{\ol{1}}^{-1}) x_1(t_{\ol{1}}) t_{\ol{1}}^{\alpha_1^\vee} g_0 t_1^{\alpha_1^\vee} x_{\ol{1}}(t_1) x_1(-t_1^{-1})\big)
    &=
    \Delta_{V,s_1\lambda}\left( \ol{s_1}^{-1} x_{\ol{1}}(t_{\ol{1}}) g_0 x_1(t_1) \ol{s_1} \right)\\
    &=
    \Delta_{V,\lambda}\left(x_{\ol{1}}(t_{\ol{1}}) g_0 x_1(t_1)\right),
  \end{align*}
  since by definition $\Delta_{V, s_1 \lambda}(g') = \Delta_{V, \lambda}(\ol{s_1}^{-1} g' \ol{s_1})$ for any $g' \in G$ (note that $\Delta_{V, \lambda}(\ol{s_1}^{-1} g' \ol{s_1}) = \Delta_{V, \lambda}(\dot{s_1}^{-1} g' \dot{s_1})$ for any other lift $\dot{s}_1$ of $s_1$ to $G$).
\end{proof}

\subsection{Restriction to corank one}
Our next task is to lay the groundwork for proving our main result via induction on the rank of the group $\wh{G}$.
The key point is that if we delete a sink or a source from the exchange matrix $B_c$, the resulting cluster algebra of smaller rank embeds in a Lie-theoretically meaningful way into $\cA_{dp}(c)$.
In particular, we can reduce certain questions about restrictions of minors to $\dbc{c}$ to questions about minors of a proper subgroup.
To this end, we will write $\wh{G}_{\langle k\rangle}$ for the subgroup of $\wh{G}$ whose Dynkin diagram is obtained by deleting vertex $k$ from that of $\wh{G}$ and we adopt similar notations of all the other objects related to $\wh{G}_{\langle k\rangle}$.

Write $\eta_{1;c}: \dbc{c}\dashrightarrow \dbc{s_1c}[c^{-1}s_1]_{\langle 1\rangle}$ for the rational map which sends a generic element
\[
  x_{\ol{1}}(t_{\ol{1}}) x_{\ol{2}}(t_{\ol{2}}) \cdots x_{\ol{n}}(t_{\ol{n}}) h_1^{\alpha_1^\vee} h_2^{\alpha_2^\vee} \cdots h_n^{\alpha_n^\vee} x_n(t_n) \cdots x_2(t_2) x_1(t_1)
\]
of $\dbc{c}$ to the following generic element of $\dbc{s_1c}[c^{-1}s_1]_{\langle 1 \rangle}$:
\[
  x_{\ol{2}}(t_{\ol{2}}) \cdots x_{\ol{n}}(t_{\ol{n}}) h_2^{\alpha_2^\vee} \cdots h_n^{\alpha_n^\vee} x_n(t_n h_1^{a_{1n}}) \cdots x_2(t_2 h_1^{a_{12}}).
\]
Similarly, write $\eta_{n;c}: \dbc{c}\dashrightarrow \dbc{cs_n}[s_nc^{-1}]_{\langle n \rangle}$ for the rational map $\theta_{\langle n \rangle}\circ\eta_{n,c^{-1}}\circ\theta$ sending a generic element
\[
  x_n(t_n) x_{n-1}(t_{n-1}) \cdots x_1(t_1) h_1^{\alpha_1^\vee} \cdots h_{n-1}^{\alpha_{n-1}^\vee} h_n^{\alpha_n^\vee} x_{\ol{1}}(t_{\ol{1}}) \cdots x_{\ol{n-1}}(t_{\ol{n-1}}) x_{\ol{n}}(t_{\ol{n}})
\]
of $\dbc{c}$ to the following generic element of $\dbc{cs_n}[s_nc^{-1}]_{\langle n \rangle}$:
\[
  x_{n-1}(t_{n-1}) \cdots x_1(t_1) h_1^{\alpha_1^\vee} \cdots h_{n-1}^{\alpha_{n-1}^\vee} x_{\ol{1}}(t_{\ol{1}}h_n^{-a_{n1}}) \cdots x_{\ol{n-1}}(t_{\ol{n-1}}h_n^{-a_{n,n-1}}).
\]

\begin{proposition}
  \label{proposition:injection}
  The maps $\eta_{1;c}$ and $\eta_{n;c}$ are regular and dominant.
  The pullbacks
  \[
    \eta_{1;c}^*: \cA_{dp}(s_1c) \hookrightarrow \cA_{dp}(c)
    \qquad
    \text{and}
    \qquad
    \eta_{n;c}^*: \cA_{dp}(cs_n) \hookrightarrow \cA_{dp}(c)
  \]
  are maps of cluster algebras, i.e.~they send clusters to clusters and exchange relations to exchange relations.
  In particular, $\eta_{1;c}^*$ takes cluster variables to cluster variables compatible with $x_{\omega_1;c}$ while~$\eta_{n;c}^*$ takes cluster variables to cluster variables compatible with $x_{-\omega_n;c}$.
\end{proposition}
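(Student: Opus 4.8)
The plan is to verify directly, on the generic factorizations given in the definitions of $\eta_{1;c}$ and $\eta_{n;c}$, that these maps are well-defined regular morphisms onto dense subsets, and then to identify the pullback on coordinate rings by checking it on the initial cluster data of $\cA_{dp}(s_1c)$ (resp. $\cA_{dp}(cs_n)$). I will treat $\eta_{1;c}$ in detail, the case of $\eta_{n;c}$ following by applying $\theta$ and the corank-one analogue of $\theta$ exactly as in the definition $\eta_{n;c} = \theta_{\langle n\rangle}\circ\eta_{n,c^{-1}}\circ\theta$.

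First I would establish regularity and dominance. The source $\dbc{c}$ is covered, by Proposition~\ref{prop:shuffled factorizations}, by the image of the explicit open torus chart; on this chart the formula for $\eta_{1;c}$ is visibly a morphism of tori (the entries of the target factorization are Laurent monomials in $h_1,\dots,h_n,t_{\ol1},\dots,t_n,t_1$), so $\eta_{1;c}$ is regular on a dense open set. To see it extends to all of $\dbc{c}$ and lands in $\dbc{s_1c}[c^{-1}s_1]_{\langle1\rangle}$, I would note that deleting the leftmost $x_{\ol1}$, the rightmost $x_1$, and the coroot factor $h_1^{\alpha_1^\vee}$ (after absorbing it into the neighboring one-parameter subgroups via the identity $h^{\alpha_1^\vee}x_i(t)h^{-\alpha_1^\vee}=x_i(t h^{\langle\alpha_i,\alpha_1^\vee\rangle})=x_i(th^{a_{1i}})$) produces precisely an element in $B_+^{\langle1\rangle}\dot{(s_1c)}B_+^{\langle1\rangle}\cap B_-^{\langle1\rangle}\dot{(c^{-1}s_1)}B_-^{\langle1\rangle}$; Bruhat-theoretic cell membership is a closed-then-open condition, so regularity on the torus chart plus the fact that $\dbc{c}$ is irreducible (being an open subset of an irreducible variety) forces $\eta_{1;c}$ to be everywhere regular. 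Dominance is immediate since the torus chart of the target is hit.

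Next I would compute $\eta_{1;c}^*$ on the distinguished generators of $\kk[\dbc{s_1c}[c^{-1}s_1]_{\langle1\rangle}]$ supplied by Proposition~\ref{prop:coordring} applied to $\wh G_{\langle1\rangle}$ with Coxeter element $s_1c=s_2\cdots s_n$. Using the generic-evaluation technique of \eqref{eq:generic evaluation} (the same coroot-subgroup bookkeeping as in the proof of Proposition~\ref{prop:psi}), the pullbacks of the initial cluster variables $x_{\omega_j;s_1c}$ ($j\in[2,n]$) and of the frozen variables of $\cA_{dp}(s_1c)$ become explicit monomials in the generators of $\cA_{dp}(c)$; comparing with Proposition~\ref{prop:coordring} and \cite[Equation (3.10)]{RSW16} one reads off that $\eta_{1;c}^*$ sends the initial seed of $\cA_{dp}(s_1c)$ to the subseed of the initial seed of $\cA_{dp}(c)$ obtained by freezing direction $1$, with the frozen variable $x_{\omega_1;c}$ of $\cA_{dp}(c)$ and its companions absorbing the extra monomial factors coming from the $h_1^{a_{1i}}$ shifts. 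Since freezing a vertex of an exchange matrix yields a cluster algebra whose clusters and exchange relations are obtained from those not involving the frozen direction, this identification shows $\eta_{1;c}^*$ is an injection of cluster algebras in the stated sense, and that every image cluster variable lies in a cluster containing $x_{\omega_1;c}$, i.e. is compatible with it. The claim for $\eta_{n;c}^*$ and compatibility with $x_{-\omega_n;c}$ follows by transporting this argument through $\theta$, which by construction swaps $c\leftrightarrow c^{-1}$, the initial cluster variable $x_{\omega_n;c^{-1}}$ with $x_{-\omega_n;c}$, and intertwines $\eta_{n;c^{-1}}$ with $\eta_{n;c}$.

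The main obstacle I anticipate is not the torus-chart computation, which is routine monomial bookkeeping of the same flavor as the proof of Proposition~\ref{prop:psi}, but rather the clean verification that $\eta_{1;c}$ is regular and takes values in the smaller double Bruhat cell \emph{globally} rather than just generically — i.e. controlling the behavior at the boundary where the chart degenerates. The cleanest route is probably to avoid boundary analysis altogether: present $\eta_{1;c}$ intrinsically as the composite of (a) the well-known projection $G^{u,v}\to G^{\langle1\rangle}$ dual to a Levi-type inclusion together with a twist by the $h_1$-dependent torus automorphism, and check that this intrinsic description agrees with the generic formula on the chart; regularity is then structural. A secondary subtlety is bookkeeping the exact monomial in frozen variables by which each pulled-back cluster variable must be corrected, but as in Proposition~\ref{prop:psi} this is pinned down by tropically evaluating $F$-polynomials, and by \cite[Proposition 11]{RuSt17} these tropical evaluations are trivial here, so the freezing map is in fact a genuine coefficient specialization with no surprises.
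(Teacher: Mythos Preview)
Your overall strategy matches the paper's: compute $\eta_{1;c}^*$ on the initial cluster data of $\cA_{dp}(s_1c)$, recognize the image as the subseed of the initial seed of $\cA_{dp}(c)$ obtained by freezing direction~$1$ (explicitly $x_{\omega_j;s_1c}\mapsto x_{\omega_j;c}$, $x'_{\omega_j;s_1c}\mapsto x'_{\omega_j;c}$, $z_{j;s_1c}\mapsto z_{j;c}$, $z_{\ol\jmath;s_1c}\mapsto z_{\ol\jmath;c}\,x_{\omega_1;c}^{-a_{1j}}$ for $j\in[2,n]$), verify that the lower-bound relations are respected, and then invoke \cite[Proposition~4.5]{YZ08} to propagate to all cluster variables.

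Where you diverge is precisely at the point you flag as the main obstacle, and the paper's resolution is much simpler than either of your proposed fixes. Rather than establishing regularity of $\eta_{1;c}$ geometrically and then computing the pullback, the paper reverses the logic: it first checks on the torus chart that $\eta_{1;c}^*$ sends every generator of $\kk[\dbc{s_1c}[c^{-1}s_1]_{\langle1\rangle}]$ to an honest element of $\kk[\dbc{c}]$, and then observes that this \emph{is} regularity --- a rational map of affine varieties is regular exactly when pullback carries a generating set of the target coordinate ring into the source coordinate ring. No boundary analysis, no intrinsic Levi-type description, no extension argument is needed; the algebraic computation you were going to do anyway already settles it.

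One small correction: the tropical evaluations here are trivial not via \cite[Proposition~11]{RuSt17} (which is what handles the more delicate situation in Proposition~\ref{prop:psi}) but for the elementary reason that the specialized coefficients $y_j=z_{j;c}z_{\ol\jmath;c}x_{\omega_1;c}^{-a_{1j}}$ involve only nonnegative exponents (since $a_{1j}\le 0$ for $j\ne 1$), so every $F$-polynomial tropicalizes to~$1$ automatically.
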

\begin{proof}
  We begin by considering $\eta_{1;c}$.
  By definition, this map is dominant and to establish regularity it suffices to show that $\eta_{1;c}^*$ behaves as claimed.
  Since they are acyclic $\cA_{dp}(s_1c)$ and $\cA_{dp}(c)$ are lower bounds (cf.~\cite{BFZ05}), therefore it suffices to show that the map $\eta_{1;c}^*$ behaves as desired on their generators.
  The algebra $\cA_{dp}(c)$ is generated by $\{x_{\omega_i;c}\}_{i\in[1,n]}$, $\{x_{\omega_i;c}'\}_{i\in[1,n]}$, $\{z_{i;c}\}_{i\in[1,n]}$, and~$\{z_{\ol{\imath};c}\}_{i\in[1,n]}$ subject to the relations (for $j\in[1,n]$)
  \[
    x_{\omega_j;c}x_{\omega_j;c}' = \prod_{i>j} x_{\omega_i;c}^{-a_{ij}} + z_{j;c}z_{\ol{\jmath};c}\prod_{i<j} x_{\omega_i;c}^{-a_{ij}}.
  \]
  The algebra $\cA_{dp}(s_1c)$ is generated by $\{ x_{\omega_i;s_1 c}\}_{i\in[2,n]}$, $\{ x_{\omega_i;s_1 c}'\}_{i\in[2,n]}$, $\{ z_{i;s_1 c}\}_{i\in[2,n]}$, and $\{ z_{\ol{\imath};s_1 c}\}_{i\in[2,n]}$ subject to the relations (for $j\in[2,n]$)
  \[
     x_{\omega_j;s_1 c} x_{\omega_j;s_1 c}' = \prod_{i>j}  x_{\omega_i;s_1 c}^{-a_{ij}} +  z_{j;s_1 c} z_{\ol{\jmath};s_1 c}\prod_{1<i<j}  x_{\omega_i;s_1 c}^{-a_{ij}}.
  \]
  One way to embed $\cA_{dp}(s_1 c)$ inside $\cA_{dp}(c)$ is thus to choose the map given for~$j \in [2,n]$ by
  \[
    x_{\omega_j;s_1 c} \mapsto x_{\omega_j;c},\quad
    x_{\omega_j;s_1 c}' \mapsto x_{\omega_j;c}',\quad
    z_{j; s_1 c} \mapsto z_{j;c},\quad
    z_{\ol{\jmath}; s_1 c} \mapsto z_{\ol{\jmath};c} x_{\omega_1;c}^{-a_{1j}}.
  \]
  Using the formulas in \cite[Equation (3.10)]{RSW16}, this is easily seen to be the pullback along the map~$\eta_{1;c}:G^c\to G^{s_1c}_{\langle 1\rangle}$ in the claim.

  This shows that the map $\eta_{1;c}^*$ is injective.
  To conclude that it sends cluster variables to cluster variables it suffices to observe that it is a coefficient specialization.
  In particular, by \cite[Proposition 4.5]{YZ08}, it sends non-initial cluster variables of $\cA_{dp}(s_1 c)$ to cluster variables of $\cA_{dp}(c)$ multiplied by a monomial in the frozen variables.
  This monomial is computed by tropically evaluating the corresponding $F$-polynomials at $y_j=z_{j;c}z_{\ol{\jmath};c} x_{\omega_1;c}^{-a_{1j}}$ for $j\in[2,n]$.
  This time though, as opposed to the proof of Proposition \ref{prop:psi}, the specialized frozen variables have only positive exponents so that the tropical evaluation always yields $1$.

  To get the result for $\eta_{n;c}$ one proceeds in a similar way.
  This time though, one considers $\cA_{dp}(c)$ as the lower bound generated by $\{x_{-\omega_i;c}\}_{i\in[1,n]}$, $\{x_{-\omega_i;c}'\}_{i\in[1,n]}$, $\{z_{i;c}\}_{i\in[1,n]}$, and $\{z_{\ol{\imath};c}\}_{i\in[1,n]}$ subject to the relations (for $j\in[1,n]$)
  \[
    x_{-\omega_j;c}x_{-\omega_j;c}' =  z_{j;c}z_{\ol{\jmath};c} \prod_{i>j} x_{-\omega_i;c}^{-a_{ij}} + \prod_{i<j} x_{-\omega_i;c}^{-a_{ij}}.\qedhere
  \]
\end{proof}

\begin{remark}
  Both maps $\eta_{1;c}^*$ and $\eta_{n;c}^*$ arise from freezing cluster variables in $\cA_{dp}(c)$; the former corresponds to freezing $x_{\omega_1;c}$ and the latter to freezing $x_{-\omega_n;c}$.
  The same argument in the proof of Lemma~\ref{lem:canopy} shows that both maps are well behaved with respect to $\bfg$-vectors.
  In particular, consider the weight lattice $P_{\langle 1 \rangle}$ of $G_{\langle 1 \rangle}$ embedded into $P$ as the set of weight whose $\omega_1$-coordinate is $0$.
  Then the map~$\eta_{1;c}^*$ sends the cluster variable with $\bfg$-vector $\lambda\in P_{\langle 1\rangle}$ to the cluster variable with the same~$\bfg$-vector $\lambda \in P$.
  A similar consideration holds for $\eta_{n;c}^*$.
\end{remark}

\begin{lemma}
  \label{lem:rankredbygvec}
  Let $\lambda = \sum_{i=1}^n \lambda_i \omega_i \in P$ be the $\bfg$-vector of a cluster monomial $x_{\lambda;c}$ in $\cA_{dp}(c)$.
  Write~$\lambda_{\langle 1\rangle} = \sum_{i=2}^n \lambda_i \omega_i\in P_{\langle 1\rangle}$ and $\lambda_{\langle n\rangle} = \sum_{i=1}^{n-1} \lambda_i \omega_i\in P_{\langle n\rangle}$.
  If $\lambda_1 \geq 0$ then
  \[
    x_{\lambda;c} = (x_{\omega_1;c})^{\lambda_1} \eta^*_{1;c}(x_{\lambda_{\langle 1\rangle}; s_1c}),
  \]
  and if $\lambda_n \leq 0$ then
  \[
    x_{\lambda;c} = (x_{-\omega_n;c})^{-\lambda_n} \eta^*_{n;c}(x_{\lambda_{\langle n\rangle};cs_n}).
  \]
\end{lemma}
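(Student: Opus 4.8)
The plan is to deduce the statement from Lemma~\ref{lem:canopy} together with the cluster-algebra maps $\eta_{1;c}^*$ and $\eta_{n;c}^*$ of Proposition~\ref{proposition:injection}. I will treat the first assertion; the second is proved by the identical argument with $s_1c$, $\omega_1$, $x_{\omega_1;c}$, $\eta_{1;c}$, and the ``$\lambda_1$'' half of Lemma~\ref{lem:canopy} replaced throughout by $cs_n$, $-\omega_n$, $x_{-\omega_n;c}$, $\eta_{n;c}$, and the ``$\lambda_n$'' half. So suppose $\lambda_1 \ge 0$. By Lemma~\ref{lem:canopy}, either $\lambda = \omega_1$, in which case $\lambda_1 = 1$, $\lambda_{\langle 1\rangle} = 0$, and the claimed identity reads $x_{\omega_1;c} = x_{\omega_1;c}\cdot\eta_{1;c}^*(x_{0;s_1c}) = x_{\omega_1;c}$ since $x_{0;s_1c} = 1$ and $\eta_{1;c}^*$ is a ring homomorphism; or $\lambda_1 = 0$, in which case $(x_{\omega_1;c})^{\lambda_1} = 1$ and $\lambda = \lambda_{\langle 1\rangle} \in P_{\langle 1\rangle}$, so it remains only to prove that $x_{\lambda;c} = \eta_{1;c}^*(x_{\lambda;s_1c})$.

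The next step is to write $x_{\lambda;c}$ as a product of cluster variables. Since the $\bfg$-vector fan is simplicial, $\lambda$ lies in the relative interior of a unique cone $\sigma$ whose rays are the $\bfg$-vectors $\mu_1,\dots,\mu_r$ of cluster variables lying in a common cluster, and $\lambda = \sum_j m_j\mu_j$ with all $m_j \ge 1$, whence $x_{\lambda;c} = \prod_j x_{\mu_j;c}^{m_j}$. Apply Lemma~\ref{lem:canopy} to each ray: either $\mu_j = \omega_1$ or $(\mu_j)_1 \le 0$. At most one $\mu_j$ can equal $\omega_1$; if one does, then every other $x_{\mu_k;c}$, lying in a common cluster with $x_{\omega_1;c}$, is compatible with $x_{\omega_1;c}$, hence lies in the image of $\eta_{1;c}^*$ (Proposition~\ref{proposition:injection} and the remark following it identify this image with the cluster variables compatible with $x_{\omega_1;c}$, $\eta_{1;c}^*$ being the freezing of $x_{\omega_1;c}$), and therefore has $(\mu_k)_1 = 0$; this gives $\lambda_1 \ge m_j \ge 1$, contradicting $\lambda_1 = 0$. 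So $\omega_1$ is not among the $\mu_j$, every $(\mu_j)_1 \le 0$, and $\sum_j m_j(\mu_j)_1 = \lambda_1 = 0$ forces $(\mu_j)_1 = 0$ for all $j$; thus each $\mu_j$ lies in $P_{\langle 1\rangle}$.

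It then remains to descend to $\cA_{dp}(s_1c)$. The key input is that a cluster variable $x_{\mu;c}$ with $\mu \in P_{\langle 1\rangle}$ is compatible with $x_{\omega_1;c}$: by the denominator-vector computation behind Lemma~\ref{lem:canopy} (via \cite{RuSt17}) the denominator vector of $x_{\mu;c}$ has vanishing $\alpha_1$-coefficient, and a cluster variable of the acyclic algebra $\cA_{dp}(c)$ is compatible with the initial cluster variable $x_{\omega_1;c}$ exactly when this holds. Hence each $x_{\mu_j;c}$ equals $\eta_{1;c}^*(x_{\mu_j;s_1c})$, where $x_{\mu_j;s_1c}$ is the cluster variable of $\cA_{dp}(s_1c)$ with $\bfg$-vector $\mu_j$ (using that $\eta_{1;c}^*$ preserves $\bfg$-vectors, as recorded in the remark following Proposition~\ref{proposition:injection}). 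Because $\eta_{1;c}^*$ identifies the clusters of $\cA_{dp}(s_1c)$ with the clusters of $\cA_{dp}(c)$ containing $x_{\omega_1;c}$, the $x_{\mu_j;s_1c}$ lie in a common cluster of $\cA_{dp}(s_1c)$, so $\prod_j x_{\mu_j;s_1c}^{m_j}$ is a cluster monomial there; its $\bfg$-vector is $\sum_j m_j\mu_j = \lambda$, so it equals $x_{\lambda;s_1c}$. Applying the ring homomorphism $\eta_{1;c}^*$ then gives $\eta_{1;c}^*(x_{\lambda;s_1c}) = \prod_j x_{\mu_j;c}^{m_j} = x_{\lambda;c}$, as desired.

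I expect the main obstacle to be the compatibility input in the last paragraph: recognizing a cluster variable with $\bfg$-vector in $P_{\langle 1\rangle}$ as compatible with, equivalently lying in the $\eta_{1;c}^*$-image of the algebra frozen at, the initial cluster variable $x_{\omega_1;c}$. This is the one point that genuinely uses the fine structure of $\bfg$- and denominator vectors for acyclic $\cA_{dp}(c)$ from \cite{RuSt17}; everything else is bookkeeping with the cluster maps already built in Proposition~\ref{proposition:injection}, and the same remarks guarantee that $\prod_j x_{\mu_j;s_1c}^{m_j}$ is well defined and carries the advertised $\bfg$-vector.
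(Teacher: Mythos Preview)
Your opening reduction is the gap. Lemma~\ref{lem:canopy}, despite saying ``cluster monomial'' in its statement, is proved only for cluster \emph{variables} (the proof invokes the bijection between non-initial cluster variables and positive roots from \cite{RuSt17}). For genuine monomials the dichotomy ``either $\lambda=\omega_1$ or $\lambda_1=0$'' is false: take $\lambda=2\omega_1$ or $\lambda=\omega_1+\omega_2$, both $\bfg$-vectors of cluster monomials with $\lambda_1>0$ and $\lambda\neq\omega_1$. So your argument as written simply does not cover the case $\lambda_1>0$, $\lambda\neq\omega_1$.

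The remedy is already implicit in your later steps: skip the dichotomy, decompose $x_{\lambda;c}=\prod_j x_{\mu_j;c}^{m_j}$ directly, and apply Lemma~\ref{lem:canopy} to each \emph{variable} $\mu_j$. If some $\mu_{j_0}=\omega_1$, your own step-5 reasoning shows the remaining $\mu_k$ have $(\mu_k)_1=0$, whence $\lambda_1=m_{j_0}$ and one peels off $(x_{\omega_1;c})^{\lambda_1}$; the residual monomial has $\bfg$-vector $\lambda_{\langle 1\rangle}$ and your final paragraph applies to it. Doing it this way also repairs a second, subtler gap in your argument: you need the $x_{\mu_k;c}$ to lie in a common cluster \emph{containing} $x_{\omega_1;c}$ in order to conclude that their preimages under $\eta_{1;c}^*$ form a cluster in $\cA_{dp}(s_1c)$, and this is automatic once $\mu_{j_0}=\omega_1$ is present but was not established in your case $\lambda_1=0$ with $\omega_1$ absent (there it requires that pairwise compatibility implies joint compatibility, true for acyclic algebras but not said). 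The paper's own proof is more direct still: it extracts the factorization $x_{\lambda;c}=(x_{\omega_1;c})^{\lambda_1}x_{\lambda-\lambda_1\omega_1;c}$ from the variable-level Lemma~\ref{lem:canopy} and then cites \cite[Proposition~12]{RuSt17} for the identification $x_{\lambda-\lambda_1\omega_1;c}=\eta_{1;c}^*(x_{\lambda_{\langle 1\rangle};s_1c})$, bypassing the compatibility bookkeeping entirely.
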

\begin{proof}
  Again we deal only with the first claim, the other being obtained in a similar way.
  In view of Lemma \ref{lem:canopy}, the cluster monomial $x_{\lambda;c}$ factors as $(x_{\omega_1;c})^{\lambda_1} x_{\lambda-\lambda_1\omega_1;c}$.
  The fact that $x_{\lambda-\lambda_1\omega_1;c}$ is the image of $x_{\lambda_{\langle 1\rangle}, s_1c}$ via $\eta^*_{1;c}$ follows then from \cite[Proposition 12]{RuSt17}.
\end{proof}

\begin{proposition}
  \label{prop:minorrankinduction}
  Let $V$ be a weight representation of $G$ for which $\lambda = \sum_{i=1}^n \lambda_i \omega_i$ is extremal, and let $\Delta_{V, \lambda}$ be the minor defined by some choice of $v_\lambda$, $\pi_\lambda$.
  If $\lambda_1 \geq 0$ then
  \[
    \Delta_{V, \lambda}\Big|_{\dbc{c}}
    =
    \left(\Delta_{\omega_1}\Big|_{\dbc{c}}\right)^{\lambda_1}
    \eta_{1;c}^* \left(\Delta_{V_{\langle 1\rangle}, \lambda_{\langle 1\rangle}}\Big|_{\dbc{s_1c}[c^{-1}s_1]_{\langle 1 \rangle}}\right),
  \]
  where $V_{\langle 1 \rangle}$ denotes $V$ as a $G_{\langle 1\rangle}$-representation, $\lambda_{\langle 1\rangle} = \sum_{i=2}^n \lambda_i \omega_i$ is the restriction of $\lambda$ to $H_{\langle 1 \rangle}$, and~$\Delta_{V_{\langle 1\rangle}, \lambda_{\langle 1\rangle}}$ is defined by the same $v_\lambda$, $\pi_\lambda$ used to define $\Delta_{V, \lambda}$.
  Similarly, if $\lambda_n \leq 0$ then
  \[
    \Delta_{V, \lambda}\Big|_{\dbc{c}}
    =
    \left(\Delta_{-\omega_n}\Big|_{\dbc{c}}\right)^{-\lambda_n}
    \eta_{n;c}^* \left(\Delta_{V_{\langle n\rangle}, \lambda_{\langle n\rangle}}\Big|_{\dbc{cs_n}[s_nc^{-1}]_{\langle n \rangle}}\right),
  \]
  the notation being defined analogously.
\end{proposition}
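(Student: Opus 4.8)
The plan is to verify both identities by evaluating each side on a generic element of $\dbc{c}$, using the open embedding of Proposition~\ref{prop:shuffled factorizations} together with the observation that both sides are restrictions of regular functions, so that agreement on a dense open subset suffices: the left side is the restriction of the matrix coefficient $\Delta_{V,\lambda}$, and the right side is regular because $\eta_{1;c}$ (resp.\ $\eta_{n;c}$) is a regular dominant map by Proposition~\ref{proposition:injection}. I will carry out the argument for $\lambda_1 \ge 0$; the case $\lambda_n \le 0$ is entirely parallel, using $\eta_{n;c}$, the $\lambda_n \le 0$ half of Lemma~\ref{lemma:one extra step}, and the lowest-weight representation $V(-\omega_n)$ in place of the highest-weight representation $V(\omega_1)$.

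So fix the generic element
\[
  g = x_{\ol{1}}(t_{\ol{1}}) x_{\ol{2}}(t_{\ol{2}}) \cdots x_{\ol{n}}(t_{\ol{n}})\, h_1^{\alpha_1^\vee} h_2^{\alpha_2^\vee} \cdots h_n^{\alpha_n^\vee}\, x_n(t_n) \cdots x_2(t_2) x_1(t_1)
\]
of $\dbc{c}$. Since $\lambda_1 \ge 0$ (and likewise for the fundamental weight $\omega_1$), Lemma~\ref{lemma:one extra step} lets me delete the outermost factors $x_{\ol 1}(t_{\ol 1})$ and $x_1(t_1)$ without changing the value of $\Delta_{V,\lambda}$ or of $\Delta_{\omega_1}$. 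Commuting $h_1^{\alpha_1^\vee}$ to the right past the remaining torus factors (trivially) and past each $x_j(t_j)$ via $h_1^{\alpha_1^\vee} x_j(t_j) = x_j(t_j h_1^{a_{1j}}) h_1^{\alpha_1^\vee}$, one recognizes exactly the defining expression for $\eta_{1;c}(g)$, so the truncated element factors as $\eta_{1;c}(g)\, h_1^{\alpha_1^\vee}$ with $\eta_{1;c}(g) \in G_{\langle 1 \rangle}$. As $h_1^{\alpha_1^\vee}$ scales the weight-$\lambda$ vector $v_\lambda$ by $h_1^{\langle \lambda | \alpha_1^\vee \rangle} = h_1^{\lambda_1}$, this gives
\[
  \Delta_{V,\lambda}(g) = h_1^{\lambda_1}\, \Delta_{V,\lambda}\big(\eta_{1;c}(g)\big),
  \qquad
  \Delta_{\omega_1}(g) = h_1\, \Delta_{\omega_1}\big(\eta_{1;c}(g)\big).
\]

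It remains to (i) identify $\Delta_{V,\lambda}$ with $\Delta_{V_{\langle 1\rangle}, \lambda_{\langle 1\rangle}}$ on the subgroup $G_{\langle 1 \rangle}$, and (ii) show $\Delta_{\omega_1}$ is identically $1$ on $G_{\langle 1\rangle}$. For (i): the weight $\lambda$ restricts to $\lambda_{\langle 1\rangle}$ on $H_{\langle 1\rangle}$; the vector $v_\lambda$ stays extremal for $G_{\langle 1\rangle}$, since extremality over all $i \in [1,n]$ and $w \in W$ a fortiori gives extremality over $i \in [2,n]$ and $w \in W_{\langle 1\rangle}$; and the $G_{\langle 1\rangle}$-weight space of weight $\lambda_{\langle 1\rangle}$ contains $V_\lambda$, so $\pi_\lambda$ still factors through the appropriate weight projection and remains admissible data for $\Delta_{V_{\langle 1\rangle}, \lambda_{\langle 1\rangle}}$; hence the two minors agree on all of $G_{\langle 1\rangle}$. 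For (ii): $\eta_{1;c}(g)$ is a product of factors $x_{\ol j}(\cdot)$, $h_j^{\alpha_j^\vee}$, $x_j(\cdot)$ with $j \in [2,n]$, and each of these fixes the highest-weight vector $v_{\omega_1}$ of $V(\omega_1)$, because $e_j v_{\omega_1} = f_j v_{\omega_1} = 0$ and $\langle \omega_1 | \alpha_j^\vee \rangle = 0$ for $j \ne 1$; thus $\eta_{1;c}(g)\, v_{\omega_1} = v_{\omega_1}$ and $\Delta_{\omega_1}\big(\eta_{1;c}(g)\big) = 1$. Combining, $\Delta_{\omega_1}(g) = h_1$, whence $\big(\Delta_{\omega_1}(g)\big)^{\lambda_1} = h_1^{\lambda_1}$, and substituting this and (i) into the displayed formula for $\Delta_{V,\lambda}(g)$ gives the claimed identity on the dense open subset, hence everywhere.

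The one point requiring genuine care is the bookkeeping identifying the commuted product with $\eta_{1;c}(g)$ on the nose — tracking the scaling factors $h_1^{a_{1j}}$ through the product in the correct order — together with the compatibility in (i) between the $G$- and $G_{\langle 1\rangle}$-structures; the latter is harmless precisely because passing from $G$ to $G_{\langle 1\rangle}$ can only enlarge the relevant weight space, so $v_\lambda$ and $\pi_\lambda$ stay legitimate. For the $\lambda_n \le 0$ statement the roles of left and right are exchanged (and $h_n^{a_{nj}}$ is replaced by $h_n^{-a_{nj}}$, matching the definition of $\eta_{n;c}$), while $V(-\omega_n)$ with its lowest-weight vector $v_{-\omega_n}$ plays the role of $V(\omega_1)$; there $h_n^{\alpha_n^\vee}$ acts on $v_{-\omega_n}$ by the scalar $h_n^{-1}$, which is what produces the exponent $-\lambda_n$ on $\Delta_{-\omega_n}$.
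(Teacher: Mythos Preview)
Your proof is correct and follows essentially the same approach as the paper's: evaluate on a generic factorized element, use Lemma~\ref{lemma:one extra step} to drop the outer $x_{\ol{1}}$ and $x_1$ factors, commute $h_1^{\alpha_1^\vee}$ to the right to recognize $\eta_{1;c}(g)\,h_1^{\alpha_1^\vee}$, extract the scalar $h_1^{\lambda_1}$, and identify $\Delta_{V,\lambda}$ on $G_{\langle 1\rangle}$ with $\Delta_{V_{\langle 1\rangle},\lambda_{\langle 1\rangle}}$. The only cosmetic difference is that the paper reads off $\Delta_{\omega_1}(g)=h_1$ directly from the factorization, whereas you obtain it via the extra step $\Delta_{\omega_1}(\eta_{1;c}(g))=1$; your added justification that $v_\lambda$ remains extremal and $\pi_\lambda$ remains admissible for $G_{\langle 1\rangle}$ is a welcome clarification of a point the paper leaves implicit.
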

\begin{proof}
  Assume at first that $\lambda_1\geq0$ and compute on the generic element
  \[
    g=x_{\ol{1}}(t_{\ol{1}}) \cdots x_{\ol{n}}(t_{\ol{n}}) h_1^{\alpha_1^\vee} \cdots h_n^{\alpha_n^\vee} x_n(t_n) \cdots x_1(t_1).
  \]
  By Lemma \ref{lemma:one extra step}, we have
  \begin{align*}
    \Delta_{V,\lambda}(g)
    =&
    \Delta_{V,\lambda}\Big(x_{\ol{1}}(t_{\ol{1}}) x_{\ol{2}}(t_{\ol{2}}) \cdots x_{\ol{n}}(t_{\ol{n}}) h_1^{\alpha_1^\vee}\cdots h_n^{\alpha_n^\vee} x_n(t_n) \cdots x_2(t_2) x_1(t_1)\Big)
    \\=&
    \Delta_{V,\lambda}\Big(x_{\ol{2}}(t_{\ol{2}}) \cdots x_{\ol{n}}(t_{\ol{n}}) h_1^{\alpha_1^\vee}\cdots h_n^{\alpha_n^\vee} x_n(t_n) \cdots x_2(t_2)\Big)
    .
    \intertext{Then using the commutation rules together with the observation that $\Delta_{V,\lambda}(g' h)=h^\lambda\Delta_{V,\lambda}(g')$ for any $h\in H$, $g'\in G$, we get}
    \Delta_{V,\lambda}(g) =&
    \Delta_{V,\lambda}\Big(x_{\ol{2}}(t_{\ol{2}}) \cdots x_{\ol{n}}(t_{\ol{n}}) h_2^{\alpha_2^\vee}\cdots h_n^{\alpha_n^\vee} x_n(t_n h_1^{a_{1n}}) \cdots x_2(t_2 h_1^{a_{12}}) h_1^{\alpha_1^\vee}\Big)
    \\=&
    h_1^{\lambda_1}\Delta_{V,\lambda}\Big(x_{\ol{2}}(t_{\ol{2}}) \cdots x_{\ol{n}}(t_{\ol{n}}) h_2^{\alpha_2^\vee}\cdots h_n^{\alpha_n^\vee} x_n(t_n h_1^{a_{1n}}) \cdots x_2(t_2 h_1^{a_{12}})\Big)
    \\=&
    \Big(\Delta_{\omega_1}(g)\Big)^{\lambda_1}\Delta_{V,\lambda}\Big(x_{\ol{2}}(t_{\ol{2}}) \cdots x_{\ol{n}}(t_{\ol{n}}) h_2^{\alpha_2^\vee}\cdots h_n^{\alpha_n^\vee} x_n(t_n h_1^{a_{1n}}) \cdots x_2(t_2 h_1^{a_{12}})\Big).
    \intertext{%
      Note that $\Delta_{\omega_1}(g)=h_1$ follows directly from the factorization of $g$.
      By construction $\Delta_{V_{\langle 1\rangle},\lambda_{\langle 1\rangle}}$ is the restriction of $\Delta_{V,\lambda}$ along the canonical inclusion $G_{\langle 1 \rangle} \subset G$.
      Since the point on which the minor $\Delta_{V,\lambda}$ is computed in the last expression lies in $G_{\langle 1 \rangle} \subset G$, it further follows from the definition of $\eta_{1;c}$ that
    }
    \Delta_{V,\lambda}(g) =&
    \Big(\Delta_{\omega_1}(g)\Big)^{\lambda_1}\Delta_{V_{\langle 1\rangle},\lambda_{\langle 1 \rangle}}\Big(x_{\ol{2}}(t_{\ol{2}}) \cdots x_{\ol{n}}(t_{\ol{n}}) h_2^{\alpha_2^\vee}\cdots h_n^{\alpha_n^\vee} x_n(t_n h_1^{a_{1n}}) \cdots x_2(t_2 h_1^{a_{12}})\Big)
    \\=&
    \Big(\Delta_{\omega_1}(g)\Big)^{\lambda_1}\Delta_{V_{\langle 1\rangle},\lambda_{\langle 1\rangle}}\Big(\eta_{1;c}(g)\Big)
    \\=&
    \Big(\Delta_{\omega_1}(g)\Big)^{\lambda_1}\eta_{1;c}^*\Big(\Delta_{V_{\langle 1\rangle},\lambda_{\langle 1\rangle}}\Big)(g).
  \end{align*}

  The claim for $\lambda_n\leq0$ is obtained in exactly the same way from the generic factorization
  \[
    g=x_n(t_n) \cdots x_1(t_1) h_1^{\alpha_1^\vee} \cdots h_n^{\alpha_n^\vee} x_{\ol{1}}(t_{\ol{1}}) \cdots x_{\ol{n}}(t_{\ol{n}}).\qedhere
  \]
\end{proof}

\section{Cambrian cluster monomials}
\label{sec:bigcambrian}

Informally, our main result relies on the fact that  the seeds we consider, although not necessarily acyclic themselves, can be made to intersect the acyclic initial seed by mutating the latter only at sinks or sources.
Moreover, the same property holds inductively when freezing the cluster variables in this intersection.
In general, not every seed in an acyclic cluster algebra will satisfy this requirement, but all seeds do in finite and affine types.

Note that this condition is much weaker than asking for our seeds to be reachable from the initial one using only source-sink moves: source-sink mutations in the smaller rank cluster algebra can create cycles in the full algebra.
This is the reason why we are also able to discuss certain cyclic seeds by essentially understanding only source-sink moves.

These considerations are formalized by the (doubled) Cambrian fan of \cite{RS16,RS15}.
This is a simplicial fan obtained by gluing certain chambers in the Coxeter arrangement and which turns out to be a subfan of the $\bfg$-vector fan.
We will not need the full construction here so we briefly recall only the notions we will use; we refer the reader to the original papers starting from \cite{Rea07} for further details.

As before, fix a Coxeter element $c = s_1 \cdots s_n$ in the Weyl group $W$ of $\wh{G}$.
We say that a reflection~$s_i$ is \newword{initial} in $w\in W$ if there is a reduced expression for $w$ starting with $s_i$ (in other words, if the length of $s_i w$  is strictly less than the length of $w$).
The \newword{$c$-sortable elements} of $W$ are the elements satisfying the following recursive condition.
The identity  $e\in W$ is $c$-sortable.
Any other element $w\in W$ is $c$-sortable if either
\begin{itemize}
  \item
    $s_1$ is initial in $w$ and $s_1w$ is $s_1cs_1$-sortable, or
  \item
    $w$ lives in $W_{\langle 1 \rangle}$, the parabolic subgroup of $W$ not containing $s_1$, where it is $s_1c$-sortable.
\end{itemize}

Alternatively one can give the following non-recursive definition.
Write $c^\infty$ for the word obtained by concatenating infinitely many copies of our fixed reduced expression of $c$:
\[
  c^\infty = s_1\cdots s_n | s_1 \cdots s_n | s_1 \cdots s_n | \cdots.
\]
The \newword{$c$-sorting word} for $w\in W$ is the lexicographically leftmost subword of $c^\infty$ that is a reduced expression of $w$.
An element $w$ is $c$-sortable if and only if its $c$-sorting word is such that whenever a reflection $s_j$ is skipped from a copy of $c$ in $c^\infty$, it is also skipped in all copies of $c$ to its right.
Note that, even though the $c$-sorting word of $w$ depends on the fixed reduced expression of $c$, the definition is well-posed since any two such expressions differ only by transpositions of commuting reflections.

To each $c$-sortable element $w$, one can associate a collection~$\cl_c(w)$ of $n$ roots as follows.
For each~$i\in[1,n]$, denote by $w^{(i)}$ the prefix of the $c$-sorting word of $w$ obtained by dropping every simple reflection to the right of the rightmost occurrence of $s_i$.
If $w \in W_{\langle i \rangle}$, the prefix $w^{(i)}$ is the identity.
The set $\cl_c(w)$ is then
\begin{equation}
  \label{equation:non-recursive cl}
  \cl_c(w)
  :=
  \big\{-w^{(i)}(\alpha_i)\big\}_{i\in[1,n]},
\end{equation}
where again the consistency of the definition is guaranteed by the fact that reduced expressions for $c$ are connected by commutations.

Note that, since the $c$-sorting word of $w$ is a reduced expression, for each $i\in[1,n]$ only two cases are possible for the roots in $\cl_c(w)$: either $w$ is in $W_{\langle i \rangle}$ so that $-w^{(i)}(\alpha_i)=-\alpha_i$, or $-w^{(i)}(\alpha_i)$ is a positive root.

Recall that, given a vector $\beta$ in the root lattice, we denote by $[\beta:\alpha_i]$ its $i$-th coordinate when written in the basis of simple roots.
Let $[\beta:\alpha_i]_+$ be the maximum of $[\beta:\alpha_i]$ and $0$.
Write $\nu_c$ for the piecewise linear map
\begin{equation}
  \label{eq:nu}
  \nu_c(\beta)
  =
  - \sum_{i=1}^n \left( [\beta:\alpha_i] + \sum_{j < i} a_{ij} [\beta:\alpha_j]_+ \right)\omega_i
\end{equation}
from the root lattice to the weight lattice; this map, introduced in \cite{RS17}, is the one used to pass from $\bfd$-vectors to $\bfg$-vectors in the proof of Lemma~\ref{lem:canopy} (cf.~\cite[Proposition 9]{RuSt17}).
When~$\beta$ has only non-negative coordinates, the matrix representing $\nu_c$ in the bases of simple roots and fundamental weights is lower triangular, whereas it is diagonal if $\beta$ has only non-positive coordinates.

For any $c$-sortable element $w$, denote by $\Cone_c(w)$ the non-negative span of the vectors $\nu_c\left(\cl_c(w)\right)$.
Write $\cF_c$ for the collection of cones of the form $\Cone_c(w)$ for $w$ a $c$-sortable element, together with all faces of such cones.
Finally, let $\cD\cF_c$ be the union of $\cF_c$ and $-\cF_{c^{-1}}:=\left\{ -C \, |\, C \in \cF_{c^{-1}}\right\}$.

\begin{theorem}[{\cite[Corollary 1.3, Theorem 2.3, and Remark 3.25]{RS15}}]
  \label{thm:cambrian}
  The collection $\cD\cF_c$ is a simplicial fan and is a subfan of the $\bfg$-vector fan of $\cA_{dp}(c)$.
  The subcollections $\cF_c$ and $-\cF_{c^{-1}}$ are subfans which respectively cover the Tits cone and its opposite.
  In finite and affine types, $\cD\cF_c$ coincides with the $\bfg$-vector fan of $\cA_{dp}(c)$.
\end{theorem}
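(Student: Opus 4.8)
Several of the assertions are literally \cite[Corollary 1.3, Theorem 2.3, and Remark 3.25]{RS15}; here is the conceptual skeleton one would follow to reconstruct them. The plan is to establish first (i) that $\cF_c$ and $-\cF_{c^{-1}}$ are simplicial fans whose supports are, respectively, the Tits cone and its opposite, and second (ii) that every maximal cone of either one is the cone spanned by the $\bfg$-vectors of an honest cluster of $\cA_{dp}(c)$. Granting the standard fact, recalled above, that the cones spanned by $\bfg$-vectors of clusters together with their faces form a simplicial fan (sign-coherence, \cite{DWZ10}), (ii) makes $\cF_c$ and $-\cF_{c^{-1}}$ subfans of the $\bfg$-vector fan; since any two cones of a fan meet in a common face, their union $\cD\cF_c$ is then automatically a simplicial subfan as well, which gives the first two sentences of the theorem. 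The last sentence becomes a matter of comparing supports.

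For (i), the starting point is Reading's theory of $c$-sortable elements: in the Coxeter arrangement of $W$, the cones $\mathrm{cone}\bigl(\cl_c(w)\bigr)$ obtained by gluing maximal chambers according to the $c$-Cambrian congruence, together with their faces, form a simplicial fan with support equal to the Tits cone \cite{Rea07,RS16}. To transport this statement across $\nu_c$ one must verify that $\nu_c$, although only piecewise linear, restricts to a linear map on each such cone. This rests on the dichotomy recorded just before \eqref{eq:nu}: every root in $\cl_c(w)$ is either $-\alpha_i$ or positive, so $\mathrm{cone}\bigl(\cl_c(w)\bigr)$ lies in a single linearity domain of $\nu_c$ — the corresponding piece being the triangular or diagonal linear map described just after \eqref{eq:nu} — and one checks these pieces agree along shared faces. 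Simpliciality and the covering of the Tits cone then pass to $\cF_c = \nu_c(\text{Cambrian fan})$; applying the same reasoning to the Coxeter element $c^{-1}$ and then the map $\lambda \mapsto -\lambda$ yields (i) for $-\cF_{c^{-1}}$ and the opposite Tits cone.

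For (ii), I would invoke the identification — implicit in \cite[Proposition 9]{RuSt17}, as already used in the proof of Lemma~\ref{lem:canopy} — of $\cl_c(w)$ with the tuple of denominator vectors of the cluster variables of a cluster of $\cA_{dp}(c)$, together with the fact that $\nu_c$ carries $\bfd$-vectors to $\bfg$-vectors; hence $\Cone_c(w)$ is exactly a maximal cone of the $\bfg$-vector fan. With (i) and (ii) in hand, the discussion above shows $\cD\cF_c$ is a simplicial subfan of the $\bfg$-vector fan. It remains to treat finite and affine type. A subfan of a fan that has the same support as the ambient fan must coincide with it, so it suffices to check that the support of $\cD\cF_c$, namely $(\text{Tits cone}) \cup (-\text{Tits cone})$, meets the interior of every (necessarily full-dimensional) cluster cone. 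In finite type the Tits cone is all of $P \otimes \RR$ and there is nothing to prove; in affine type the complement of $(\text{Tits cone}) \cup (-\text{Tits cone})$ is contained in the level-zero hyperplane, hence has empty interior, so it cannot contain the interior of any full-dimensional cone.

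The step I expect to be the main obstacle is (i), and within it the bookkeeping needed to show that $\nu_c$ is linear on each Cambrian cone and that the resulting linear pieces glue into a genuine fan rather than a mere cone complex; this piecewise-linear analysis is what transports Reading's results into the weight lattice, and it is the input on which both the subfan statement and, through it, the compatibility of $\cF_c$ with $-\cF_{c^{-1}}$ ultimately rest.
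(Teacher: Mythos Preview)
The paper does not prove this result; it is imported wholesale from \cite{RS15}, as your opening sentence acknowledges. So there is no in-paper argument to compare against, and your sketch is a reconstruction rather than an alternative.

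Your outline for the first two sentences---establish (ii) that each $\Cone_c(w)$ is a genuine $\bfg$-vector cone, then note that two subfans of a common fan have a fan as their union---is sound. Your argument for the final sentence in affine type is also correct and rather clean: once $\cD\cF_c$ is known to be a subfan, the fact that the complement of $(\text{Tits}) \cup (-\text{Tits})$ lies in the level-zero hyperplane forces every full-dimensional cluster cone into $\cD\cF_c$. As far as I can tell this is not the route taken in \cite{RS15}, which argues more constructively by producing every affine cluster from a sortable or antisortable element, so your topological shortcut is a worthwhile alternative.

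There is, however, a real error in your treatment of (i). You assert that $\cF_c$ has support \emph{equal} to the Tits cone, obtained by pushing Reading's Cambrian fan forward along $\nu_c$. This is false outside finite type: the paper's own introduction states that the doubled fan is strictly larger than $(\text{Tits}) \cup (-\text{Tits})$, and concretely in type $A_1^{(1)}$ with $c = s_1 s_2$ the element $s_2$ is $c$-sortable and $\Cone_c(s_2) = \mathrm{cone}(\omega_1, -\omega_2)$ contains level-zero points not in the Tits cone. The underlying confusion is that Reading's Cambrian fan, as a coarsening of the Coxeter fan, lives in the space carrying the geometric $W$-representation, whereas $\mathrm{cone}(\cl_c(w))$ lives in the root lattice; in affine type the Cartan matrix is singular and these spaces are not naturally identified, so your ``transport via $\nu_c$'' does not link them in the way your sketch suggests. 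This error does not damage your affine argument---that only uses the inclusion $\mathrm{supp}(\cD\cF_c) \supseteq (\text{Tits}) \cup (-\text{Tits})$, which is the ``covers'' assertion---but it does mean your proposed proof of (i) itself would not go through as written.
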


Following \cite{RS15} $\cF_c$ is referred to as the \newword{$c$-Cambrian fan}, $-\cF_{c^{-1}}$ as the \newword{opposite $c$-Cambrian fan}, and $\cD\cF_c$ as the \newword{doubled $c$-Cambrian fan}.

Our strategy to prove Theorem \ref{thm:mainthm2} is to lift, using the doubled $c$-Cambrian fan $\cD\cF_c$, the recursions defining $c$-sortable and $c^{-1}$-sortable elements to the level of $\cA_{dp}(c)$.
Observe that, since $\cF_c$ covers the Tits cone, it contains the cone corresponding to the initial seed of $\cA_{dp}(c)$; this is the cone~$\Cone_c(e)$.
Similarly, the cone $-\Cone_{c^{-1}}(e)$ in $-\cF_{c^{-1}}$ corresponds to the ``opposite initial seed'', i.e.~the seed whose cluster is $\{ x_{-\omega_i;c}\}_{i\in[1,n]}$; this is obtained from the initial one by mutating along the sequence~$(n,n-1,\dots,1)$.

\begin{remark}
  \label{rk:cambprop}
  Implicit in Theorem~\ref{thm:cambrian} is the fact that the vectors $\nu_c\left(\cl_c(w)\right)$ are $\bfg$-vectors of cluster variables, and that the set of cluster variables whose $\bfg$-vectors lie in a fixed $\Cone_c(w)$ form a cluster.
  In particular, if $w$ is in $W_{\langle i \rangle}$ then the seed corresponding to~$w$ contains the initial cluster variable $x_{\omega_i;c}$.
  Note that, contrary to one's intuition, not every seed containing $x_{\omega_i;c}$ has $\bfg$-vector cone belonging to~$\cF_c$ because, when $W_{\langle i \rangle}$ is not finite, $\omega_i$ is on the boundary of the Tits cone (cf.~\cite[Lemma~2.86 and Exercise 2.90]{AB08}).

  The same reasoning applies to $-\cF_{c^{-1}}$: the $\bfg$-vectors of the cluster variables in the seed corresponding to a $c^{-1}$-sortable element~$w$ are the vectors $-\nu_{c^{-1}}\left(\cl_{c^{-1}}(w)\right)$.
\end{remark}

Combining the above observations with Lemma \ref{lem:canopy}, we get the following corollary.
\begin{corollary}
  \label{cor:canopy}
  Suppose $x_{\lambda;c}$ is a cluster monomial in $\cA_{dp}(c)$ whose $\bfg$-vector $\lambda = \sum_{i=1}^n \lambda_i\omega_i$ is contained in a cone $\Cone_c(w)$ of the $c$-Cambrian fan $\cF_c$.
  Then either $s_1$ is initial in $w$ and~$\lambda_1\leq 0$, or $w\in W_{\langle 1\rangle}$ and $\lambda_1 \geq 0$.
  Similarly, if $\lambda$ is contained in a cone $-\Cone_{c^{-1}}(w)$ of the opposite~$c$-Cambrian fan $-\cF_{c^{-1}}$, then either $s_n$ is initial in $w$ and $\lambda_n\geq 0$, or $w\in W_{\langle n\rangle}$ and $\lambda_n \leq 0$.
\end{corollary}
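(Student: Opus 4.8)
The plan is to compute the $\omega_1$-coordinate of every extreme ray of $\Cone_c(w)$ directly from the piecewise-linear map $\nu_c$, and then read off the sign of $\lambda_1$ from the fact that $\lambda$ is a non-negative combination of those rays. Recall that $\Cone_c(w)$ is by definition the non-negative span of the $n$ vectors $\nu_c\bigl(-w^{(i)}(\alpha_i)\bigr)$ for $i \in [1,n]$, that (as recalled after~\eqref{equation:non-recursive cl}) each $-w^{(i)}(\alpha_i)$ is either $-\alpha_i$ — precisely when $w \in W_{\langle i\rangle}$ — or a positive root, and that by~\eqref{eq:nu} the coefficient of $\omega_1$ in $\nu_c(\beta)$ is exactly $-[\beta:\alpha_1]$, since no index precedes $1$ in $c = s_1\cdots s_n$. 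So the statement reduces to controlling the $\alpha_1$-coordinate of the roots $-w^{(i)}(\alpha_i)$, which is governed by the dichotomy in the definition of $c$-sortability.

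First I would treat the case $w \in W_{\langle 1\rangle}$, where the claim is $\lambda_1 \geq 0$. For $i = 1$ one has $-w^{(1)}(\alpha_1) = -\alpha_1$, so the corresponding ray is $\nu_c(-\alpha_1) = \omega_1$, contributing $+1$ (this ray is the $\bfg$-vector of $x_{\omega_1;c}$, matching Remark~\ref{rk:cambprop}). For $i \geq 2$ the $c$-sorting word of $w$ omits $s_1$ entirely, as $w$ lies in the parabolic $W_{\langle 1\rangle}$; hence the prefix $w^{(i)}$ again lies in $W_{\langle 1\rangle}$ and $-w^{(i)}(\alpha_i)$ is supported on $\{\alpha_2,\dots,\alpha_n\}$, so $[-w^{(i)}(\alpha_i):\alpha_1] = 0$ and this ray has vanishing $\omega_1$-coordinate. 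All extreme rays of $\Cone_c(w)$ thus have non-negative $\omega_1$-coordinate, and therefore so does $\lambda$.

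Next I would treat the case where $s_1$ is initial in $w$, so $w \notin W_{\langle 1\rangle}$ and the claim is $\lambda_1 \leq 0$. Now $-w^{(1)}(\alpha_1)$ is a positive root $\beta$, so its ray $\nu_c(\beta)$ has $\omega_1$-coordinate $-[\beta:\alpha_1] \leq 0$; and for $i \geq 2$ the ray is either $\omega_i$ (if $w \in W_{\langle i\rangle}$), with $\omega_1$-coordinate $0$, or $\nu_c$ of a positive root, again with non-positive $\omega_1$-coordinate. Hence every extreme ray of $\Cone_c(w)$, and so $\lambda$, has non-positive $\omega_1$-coordinate. (Alternatively, one may invoke Lemma~\ref{lem:canopy}: if $\lambda_1 > 0$ then $\lambda = \omega_1$, but $\omega_1$ is a ray of the simplicial fan $\cD\cF_c$, so it would have to be one of the extreme rays of $\Cone_c(w)$, i.e.\ $x_{\omega_1;c}$ would lie in the seed attached to $w$ — impossible when $w \notin W_{\langle 1\rangle}$.) Finally, the statement for the opposite Cambrian fan is obtained by running the identical computation with $c$, the index $1$, and $\Cone_c(w)$ replaced by $c^{-1}$, the index $n$, and $-\Cone_{c^{-1}}(w)$; negating the cone flips the sign of the coordinate under consideration, which is exactly what exchanges the two inequalities. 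I expect the only real work to be bookkeeping — checking that passing to prefixes $w^{(i)}$ preserves parabolic membership and that the relevant fundamental-weight coordinate is extracted correctly from $\nu_c$ (resp.\ $\nu_{c^{-1}}$) — after which the argument is a pure sign check with no genuine obstacle.
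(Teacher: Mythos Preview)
Your argument is correct and follows essentially the same route as the paper's proof: you compute the $\omega_1$-coordinate of each generating ray of $\Cone_c(w)$ via $\nu_c$, using that this coordinate is $-[\beta:\alpha_1]$, and then separate into the two cases of the sortability recursion. The paper regards the case ``$s_1$ initial in $w$'' as obvious and only writes out the $w\in W_{\langle 1\rangle}$ case, whereas you spell out both; your parenthetical appeal to Lemma~\ref{lem:canopy} is the connection the paper alludes to in its preamble but does not actually need once the ray-by-ray computation is done.
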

\begin{proof}
  We show the claim for the $c$-Cambrian fan, the other being obtained in the same way.
  The only case that is not obvious is when $w\in W_{\langle 1\rangle}$.
  Observe that, in this situation, any root $\beta$ in the set~$\cl_c(w)$ different from $-\alpha_1$ has $[\beta:\alpha_1]=0$.
  Indeed, $\beta=-w^{(i)}\alpha_i$ is obtained from a simple root~$\alpha_i\neq\alpha_1$ by the action of an element in $W_{\langle 1\rangle}$.
  In particular, all the weights in $\nu_c\left(\cl_c(w)\right)$ have first coordinate equal to 0 with the exception of $\nu_c(-\alpha_1) =\omega_1$.
\end{proof}

We conclude this section by describing how certain cones in the doubled $c$-Cambrian fan $\cD\cF_c$ change when $c$ is conjugated by $s_1$ or $s_n$.
By symmetry, we only need to understand the case $\cF_c$.
\begin{lemma}
  \label{lemma:cone reflection}
  Suppose $w$ is $c$-sortable and $s_1$ is initial in $w$.
  Then
  \[
    s_1\Big(\Cone_c(w)\Big) = \Cone_{s_1cs_1}(s_1w).
  \]
\end{lemma}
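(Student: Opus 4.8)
The plan is to reduce the claim about cones to an explicit combinatorial identity relating the two "clearing sets" $\cl_c(w)$ and $\cl_{s_1cs_1}(s_1w)$, together with the compatibility of the piecewise-linear maps $\nu_c$ and $\nu_{s_1cs_1}$ with the reflection $s_1$. Since $s_1$ is initial in $w$, the $c$-sortable element $w$ satisfies the first clause of the recursion: $s_1w$ is $s_1cs_1$-sortable, and its $s_1cs_1$-sorting word is obtained from the $c$-sorting word of $w$ by deleting the leading $s_1$. This gives a clean bookkeeping dictionary between the prefixes $w^{(i)}$ used to define $\cl_c(w)$ and the prefixes $(s_1w)^{(i)}$ used to define $\cl_{s_1cs_1}(s_1w)$. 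Concretely, for $i\neq 1$ one expects $(s_1w)^{(i)} = s_1 w^{(i)}$ (the rightmost occurrence of $s_i$ is unaffected by deleting the leading $s_1$), so that the $i$-th root of $\cl_{s_1cs_1}(s_1w)$ is $-s_1w^{(i)}(\alpha_i) = s_1\bigl(-w^{(i)}(\alpha_i)\bigr)$, i.e.\ the $s_1$-image of the $i$-th root of $\cl_c(w)$. The case $i=1$ must be handled separately — here the rightmost $s_1$ in the $c$-sorting word of $w$ is potentially the leading one being deleted; a short case analysis using that $s_1$ is initial (so it does occur) will show that the $\omega_1$-direction ray behaves correctly under $s_1$ as well.

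First I would set up this dictionary precisely: write out the $c$-sorting word $w = s_1 s_{j_1}\cdots s_{j_k}$ (a reduced expression beginning with $s_1$, a subword of $c^\infty$), observe that $s_1w = s_{j_1}\cdots s_{j_k}$ is then a subword of $(s_1cs_1)^\infty = s_2\cdots s_n s_1 | s_2 \cdots s_n s_1|\cdots$ and is in fact its $s_1cs_1$-sorting word (this is exactly the content of the first clause of $c$-sortability, and is where I would cite the recursive definition recalled in the excerpt). Then I would track, for each $i$, where the rightmost $s_i$ sits in the two words and identify $(s_1 w)^{(i)}$ with $s_1 w^{(i)}$ whenever $i \neq 1$, and separately pin down $(s_1w)^{(1)}$. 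Next I would invoke the compatibility of $\nu$ with reflection at an initial simple root. The map $\nu_c$ of \eqref{eq:nu} is only piecewise linear, so this is not automatic; however, the roots in $\cl_c(w)$ are each either $-\alpha_1$ or a positive root, and the matrix of $\nu_c$ is lower-triangular on the non-negative orthant and diagonal on the non-positive orthant (as noted in the excerpt), and a parallel statement holds for $\nu_{s_1cs_1}$ with the roles of index $1$ shuffled to the end. I would verify the pointwise identity $\nu_{s_1cs_1}(s_1\beta) = s_1\bigl(\nu_c(\beta)\bigr)$ for every $\beta \in \cl_c(w)$ — this should follow from a direct comparison of coefficients, using that conjugating $c$ by $s_1$ merely moves index $1$ from the front to the back of the linear order governing the triangular structure, and that $s_1\beta$ differs from $\beta$ only in the $\alpha_1$-coordinate.

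Assembling these pieces: for $i \neq 1$ the $i$-th generator of $\Cone_{s_1cs_1}(s_1w)$ is $\nu_{s_1cs_1}\bigl(-s_1w^{(i)}(\alpha_i)\bigr) = \nu_{s_1cs_1}\bigl(s_1(-w^{(i)}(\alpha_i))\bigr) = s_1\bigl(\nu_c(-w^{(i)}(\alpha_i))\bigr)$, which is $s_1$ applied to the $i$-th generator of $\Cone_c(w)$; the $i=1$ generator is handled by the separate computation above and also comes out to be the $s_1$-image of the corresponding generator of $\Cone_c(w)$. Since a cone is the non-negative span of its generators and $s_1$ is linear, taking non-negative spans on both sides yields $s_1\bigl(\Cone_c(w)\bigr) = \Cone_{s_1cs_1}(s_1w)$, which is the claim. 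The main obstacle I expect is the $i=1$ term together with the piecewise-linearity of $\nu$: one must be careful that $s_1$ sends $\cl_c(w)$ into $\cl_{s_1cs_1}(s_1w)$ as \emph{sets} with the indexing preserved, and that the nonlinearity of $\nu_c$ does not spoil the equivariance — both points are resolved by the structural fact that every element of $\cl_c(w)$ is either $-\alpha_1$ or a positive root, so $\nu_c$ is being evaluated only where its behavior under $s_1$ is controlled, but writing this out cleanly (especially disentangling the $\omega_1$-coordinate under $\nu_{s_1 c s_1}$ versus $\nu_c$) is the delicate part.
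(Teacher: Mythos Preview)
Your approach is essentially the paper's: verify that $s_1$ carries the generating rays of $\Cone_c(w)$ bijectively onto those of $\Cone_{s_1cs_1}(s_1w)$, using the explicit description of $\cl_c(w)$ together with the formula for $\nu_c$. However, there is a gap in your case analysis.

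You assert that every element of $\cl_c(w)$ is ``either $-\alpha_1$ or a positive root.'' This is not correct: since $s_1$ is initial in $w$, the root $-\alpha_1$ is precisely the one that does \emph{not} appear in $\cl_c(w)$. What \emph{can} appear are roots $-\alpha_j$ for $j \neq 1$, namely whenever $w \in W_{\langle j \rangle}$. For such $j$, your prefix identity $(s_1 w)^{(j)} = s_1 w^{(j)}$ fails: since no $s_j$ occurs in either sorting word, both $w^{(j)}$ and $(s_1 w)^{(j)}$ are the identity by convention, so the $j$-th element of $\cl_{s_1cs_1}(s_1w)$ is $-\alpha_j$, not $s_1(-\alpha_j)$. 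Correspondingly, your equivariance identity $\nu_{s_1cs_1}(s_1\beta) = s_1(\nu_c(\beta))$ is being asked to hold at $\beta = -\alpha_j$, where $s_1\beta = -\alpha_j + a_{1j}\alpha_1$ is neither a positive root nor a negative simple root, and neither linearity regime of $\nu_{s_1cs_1}$ applies cleanly.

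The paper handles this by organizing the casework on $\beta \in \cl_c(w)$ into three cases rather than two: the case $\beta = -\alpha_j$ with $j \neq 1$ (where the corresponding element of $\cl_{s_1cs_1}(s_1w)$ is again $-\alpha_j$, and both rays are simply $\omega_j$, fixed by $s_1$); the case $\beta = \alpha_1$ (where $s_1$ occurs only once in the $c$-sorting word, the corresponding element of $\cl_{s_1cs_1}(s_1w)$ is $-\alpha_1$, and a short computation shows $s_1(\nu_c(\alpha_1)) = \omega_1 = \nu_{s_1cs_1}(-\alpha_1)$); and the generic case of a positive root $\beta \neq \alpha_1$, where your root-level identity $s_1\beta \in \cl_{s_1cs_1}(s_1w)$ and your $\nu$-equivariance both hold, the latter verified by an explicit coordinate computation. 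Once you add the $\beta = -\alpha_j$ case to your plan, the argument goes through exactly as the paper's does.
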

\begin{proof}
  It suffices to show that $s_1$ maps the rays of $\Cone_c(w)$ bijectively onto the rays of $\Cone_{s_1cs_1}(s_1w)$; we will therefore consider all possible roots $\beta\in\cl_c(w)$.
  Begin by observing that $-\alpha_1\not \in \cl_c(w)$, since $s_1$ is initial in $w$ and in view of Equation \ref{equation:non-recursive cl}.
  We dispatch first of two special cases.

  If $\beta=-\alpha_j$ for some $j\neq 1$, then $w\in W_{\langle j \rangle}$.
  Therefore~$s_1w \in W_{\langle j \rangle}$ and $-\alpha_j \in \cl_{s_1cs_1}(s_1w)$.
  We get
  \[
    s_1\big(\nu_c(-\alpha_j)\big)
    =
    s_1(\omega_j)
    =
    \omega_j
    =
    \nu_{s_1cs_1}(-\alpha_j).
  \]

  If $\beta=\alpha_1$, the reflection $s_1$ can only appear once in the $c$-sorting word of $w$: it is the initial letter and $s_1w\in W_{\langle 1 \rangle}$ so that $-\alpha_1\in\cl_{s_1cs_1}(s_1w)$.
  We get
  \[
    s_1\big(\nu_c(\alpha_1)\big)
    =
    s_1\left(-\omega_1 - \sum_{i=2}^n a_{i1} \omega_i\right)
    =
    -\omega_1 + \alpha_1 - \sum_{i=2}^n a_{i1} \omega_i
    =
    \omega_1
    =
    \nu_{s_1cs_1}(-\alpha_1).
  \]

  Now let $\beta$ be any other (necessarily positive) root in $\cl_c(w)$.
  Since $\beta\neq\alpha_1$, the root $s_1(\beta)$ is positive and an element of $\cl_{s_1cs_1}(s_1w)$.
  Using the expression \eqref{eq:nu}, we then compute
  \begin{align*}
    s_1\big(\nu_c(\beta)\big)
    &=
    \nu_c(\beta) + [\beta:\alpha_1]\alpha_1
    \\&=
    - \sum_{i=1}^n\left( [\beta:\alpha_i] + \sum_{j< i}a_{ij}[\beta: \alpha_j]\right)\omega_i +[\beta:\alpha_1]\sum_{i=1}^na_{i1}\omega_i
    \\&=
    - \sum_{i=1}^n\left( [\beta:\alpha_i] - a_{i1}[\beta:\alpha_1] + \sum_{j< i}a_{ij}[\beta: \alpha_j]\right)\omega_i
    \\&=
    [\beta:\alpha_1]\omega_1 - \sum_{i=2}^n\left( [\beta:\alpha_i] + \sum_{1\neq j< i}a_{ij}[\beta: \alpha_j]\right)\omega_i.
  \end{align*}
  By adding and subtracting $\sum_{j=1}^na_{1j}[\beta:\alpha_j]\omega_1$, we get
  \begin{align*}
    &=
    - \left([\beta:\alpha_1] + \sum_{j \neq 1}a_{1j}[\beta: \alpha_j]\right)\omega_1  + \sum_{j=1}^na_{1j}[\beta:\alpha_j]\omega_1
    \\&\qquad\qquad\qquad\qquad - \sum_{i=2}^n\left( [\beta:\alpha_i] + \sum_{1\neq j< i}a_{ij}[\beta: \alpha_j]\right)\omega_i
    \\&=
    \nu_{s_1cs_1}\left(\beta-\sum_{j=1}^na_{1j}[\beta:\alpha_j]\alpha_1\right)
    \\&=
    \nu_{s_1cs_1}\big(s_1(\beta)\big).\qedhere
  \end{align*}
\end{proof}

We now have all the required tools to prove our main result.
\begin{theorem}
  \label{th:main}
  Let $\lambda = \sum_{i=1}^n \lambda_i \omega_i \in \cD\cF_c$ be a weight in the doubled $c$-Cambrian fan, and let~$V$ be a weight representation of $G$ for which $\lambda$ is extremal.
  Then the cluster monomial $x_{\lambda; c}$ of~$\cA_{dp}(c) \cong \kk[\dbc{c}]$ coincides with the restriction to $\dbc{c}$ of the principal generalized minor $\Delta_{V, \lambda}$ for any choice of $v_\lambda$ and~$\pi_\lambda$.
\end{theorem}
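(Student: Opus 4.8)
The plan is to run a double induction: the outer induction is on the rank $n$ of $\wh G$, and the inner induction is on a suitable notion of length measuring how far $\lambda$ sits from the initial or opposite-initial cone. By the symmetry $c \leftrightarrow c^{-1}$ provided by $\theta$ (which intertwines $\cA_{dp}(c) \cong \kk[\dbc{c}]$ with $\cA_{dp}(c^{-1}) \cong \kk[\dbc{c^{-1}}[c]]$, sends $x_{\lambda;c}$ to $x_{-\lambda;c^{-1}}$, and sends $\Delta_{V,\lambda}$ to the minor of the twisted representation), it suffices to treat the case $\lambda \in \cF_c$, i.e.\ $\lambda \in \Cone_c(w)$ for some $c$-sortable $w$. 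The base case is $w = e$, where $\Cone_c(e)$ is the cone of the initial seed: then $\lambda = \sum \lambda_i \omega_i$ with all $\lambda_i \geq 0$, the cluster monomial is $x_{\lambda;c} = \prod_i x_{\omega_i;c}^{\lambda_i}$, and the claim is exactly the product over $i$ of Proposition~\ref{prop:coordring} together with the multiplicativity of minors under tensor-like constructions — more precisely, one checks $\Delta_{V,\lambda} = \prod_i \Delta_{\omega_i}^{\lambda_i}$ on the generic factorization, which follows from Lemma~\ref{lemma:one extra step} and the torus-equivariance $\Delta_{V,\lambda}(g'h) = h^\lambda \Delta_{V,\lambda}(g')$ used in the proof of Proposition~\ref{prop:minorrankinduction}.

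\textbf{Inductive step.} Fix $\lambda \in \Cone_c(w)$ with $w \neq e$ $c$-sortable, and apply the dichotomy of Corollary~\ref{cor:canopy}. In the first case $s_1$ is initial in $w$ and $\lambda_1 \leq 0$; here I apply Proposition~\ref{prop:psi}(1) and Proposition~\ref{prop:psiminors}. Since $s_1 w$ is $s_1cs_1$-sortable and, by Lemma~\ref{lemma:cone reflection}, $s_1\lambda \in \Cone_{s_1cs_1}(s_1 w) \subset \cF_{s_1cs_1} \subset \cD\cF_{s_1cs_1}$, the inductive hypothesis (same rank, strictly smaller length since $s_1 w$ is shorter than $w$) gives $x_{s_1\lambda;s_1cs_1} = \Delta_{V,s_1\lambda}|_{\dbc{s_1cs_1}[s_1c^{-1}s_1]}$. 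One must check $s_1\lambda$ is still not $-\omega_1$, so that Proposition~\ref{prop:psi}(1) applies cleanly; this is fine because $\lambda_1 \le 0$ forces the $\omega_1$-coefficient of $s_1\lambda$ to be $-\lambda_1 \geq 0$, and Lemma~\ref{lem:canopy} applied in $\cA_{dp}(s_1cs_1)$ then forces $s_1\lambda \neq -\omega_1$ unless $s_1\lambda = \omega_1$, the latter being the base case. Pulling back along $\psi_{1;c}^*$ and using Proposition~\ref{prop:psi}(1) on the left and Proposition~\ref{prop:psiminors} on the right — both of which require exactly the sign condition $\lambda_1 \le 0$ — yields $x_{\lambda;c} = \Delta_{V,\lambda}|_{\dbc{c}}$.

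\textbf{The other branch and the rank induction.} In the second case of Corollary~\ref{cor:canopy}, $w \in W_{\langle 1 \rangle}$ and $\lambda_1 \geq 0$. Here I invoke the rank reduction: Lemma~\ref{lem:rankredbygvec} gives $x_{\lambda;c} = (x_{\omega_1;c})^{\lambda_1}\,\eta_{1;c}^*(x_{\lambda_{\langle 1\rangle};s_1c})$, while Proposition~\ref{prop:minorrankinduction} gives the parallel factorization $\Delta_{V,\lambda}|_{\dbc{c}} = (\Delta_{\omega_1}|_{\dbc{c}})^{\lambda_1}\,\eta_{1;c}^*(\Delta_{V_{\langle 1\rangle},\lambda_{\langle 1\rangle}}|_{\dbc{s_1c}[c^{-1}s_1]_{\langle 1\rangle}})$. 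Since $x_{\omega_1;c} = \Delta_{\omega_1}|_{\dbc{c}}$ by Proposition~\ref{prop:coordring}, it remains to match the two $\eta_{1;c}^*$-arguments, i.e.\ to show $x_{\lambda_{\langle 1\rangle};s_1c} = \Delta_{V_{\langle 1\rangle},\lambda_{\langle 1\rangle}}|_{\dbc{s_1c}[c^{-1}s_1]_{\langle 1\rangle}}$ in $\cA_{dp}(s_1c) \cong \kk[\dbc{s_1c}[c^{-1}s_1]_{\langle 1\rangle}]$. This is an instance of the theorem for the corank-one subgroup $\wh G_{\langle 1\rangle}$, provided $\lambda_{\langle 1\rangle}$ lies in its doubled $s_1c$-Cambrian fan $\cD\cF_{s_1c}^{\langle 1\rangle}$ — which holds because $w \in W_{\langle 1\rangle}$ is $s_1c$-sortable (by definition of $c$-sortability) and $\Cone_{s_1c}^{\langle 1\rangle}(w)$ is precisely the image of the face of $\Cone_c(w)$ obtained by intersecting with $P_{\langle 1\rangle}$, so $\lambda_{\langle 1\rangle} \in \Cone_{s_1c}^{\langle 1\rangle}(w)$. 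The outer induction on rank closes the loop. I expect the main obstacle to be bookkeeping the sign hypotheses ($\lambda_1 \leq 0$ versus $\lambda_1 \geq 0$, and the $\lambda_n$ conditions in the $\theta$-dual branches) so that at each step exactly the right one of Propositions~\ref{prop:psi}, \ref{prop:psiminors}, \ref{prop:minorrankinduction} applies — Corollary~\ref{cor:canopy} is the pivot that makes this automatic — together with the mild check that one never accidentally lands on the excluded $\bfg$-vector $-\omega_1$ (resp.\ $-\omega_n$) except in the base case, which is handled by Lemma~\ref{lem:canopy}. A secondary point requiring care is the well-definedness of $\Delta_{V,s_1\lambda}$ and of the restricted minor $\Delta_{V_{\langle 1\rangle},\lambda_{\langle 1\rangle}}$ independently of the auxiliary choices, but Propositions~\ref{prop:psiminors} and~\ref{prop:minorrankinduction} are stated precisely to absorb this.
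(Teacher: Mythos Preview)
Your proposal is essentially the paper's proof: the same double induction on rank and on the length of a $c$-sortable element $w$ with $\lambda \in \Cone_c(w)$, the same dichotomy from Corollary~\ref{cor:canopy}, and the same pairings of Proposition~\ref{prop:psi} with Proposition~\ref{prop:psiminors} in one branch and of Lemma~\ref{lem:rankredbygvec} with Proposition~\ref{prop:minorrankinduction} in the other. Your direct handling of the base case $w=e$ via Lemma~\ref{lemma:one extra step} is a mild simplification of the paper's reduction to $SL_2$; the paper treats the opposite Cambrian fan by running the parallel $s_n$-argument rather than invoking $\theta$, but this is cosmetic.

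One imprecision worth tightening: in the branch where $s_1$ is initial in $w$, you apply Proposition~\ref{prop:psi}(1) to the cluster \emph{monomial} $x_{s_1\lambda;\muoc}$ and justify it by checking $s_1\lambda \neq -\omega_1$. That is not the right check --- Proposition~\ref{prop:psi}(1) is stated for cluster variables, so you need that no \emph{factor} of $x_{s_1\lambda;\muoc}$ is $x_{-\omega_1;\muoc}$. The clean way (as in the paper) is to observe via Remark~\ref{rk:cambprop} that $\omega_1 \notin \nu_c(\cl_c(w))$ whenever $s_1$ is initial in $w$, hence no factor of $x_{\lambda;c}$ is $x_{\omega_1;c}$, and then apply Proposition~\ref{prop:psi}(1) factor by factor.
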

\begin{proof}
  There are two (possibly) overlapping cases to consider, that where $\lambda$ lies in $\cF_c$ and that where $\lambda$ lies in~$-\cF_{c^{-1}}$.
  If $\lambda$ is in $\cF_c$, there exists a $c$-sortable element $w$ such that $\lambda\in \Cone_c(w)$.
  In general, there could be many such elements (this happens precisely when $\lambda$ is not in the interior of a maximal cone of $\cF_c$), and for definiteness we choose $w\in W$ to be of minimal length in this case.
  We proceed by a double induction on the rank of $G$ and the length of $w$.
  The base case, $G=SL_2$ and $w=e$, follows immediately from Lemma \ref{lem:rankredbygvec} and Proposition \ref{prop:minorrankinduction} by observing that in both results the contribution coming from $\eta_{1;c}^*$ is just $1$.

  The definition of $c$-sortability gives us two cases: either $s_1$ is initial in $w$ or $w\in W_{\langle 1 \rangle}$.
  If $s_1$ is initial in $w$, then by Remark \ref{rk:cambprop} the weight $\omega_1$ is not an element of $\nu_c\left(\cl_c(w)\right)$.
  By Proposition~\ref{prop:psi}, we have
  \[
    x_{\lambda;c}
    =
    \prod_{\rho\in \nu_c\left(\cl_c(w)\right)} x_{\rho;c}^{a_\rho}
    =
    \prod_{\rho\in \nu_c\left(\cl_c(w)\right)} \psi_{1;c}^*\left(x_{s_1\rho;\muoc}\right)^{a_\rho}
    =
    \psi_{1;c}^*\left(x_{s_1\lambda;\muoc}\right).
  \]
  Lemma \ref{lemma:cone reflection} says that $s_1\lambda$ belongs to $\Cone_{s_1cs_1}(s_1w)$.
  Since the length of $s_1w$ is strictly less than the length of $w$, it follows from our inductive hypothesis that
  \[
    x_{s_1\lambda;\muoc}=\Delta_{V,s_1\lambda}\Big|_{\dbc{\muoc}}.
  \]
  Finally, Corollary \ref{cor:canopy} says that $\lambda_1\leq 0$, so we can apply Proposition \ref{prop:psiminors} to get
  \[
    \psi_{1;c}^*\left(\Delta_{V,s_1\lambda}\Big|_{\dbc{\muoc}}\right)
    =
    \Delta_{V,\lambda}\Big|_{\dbc{c}}.
  \]
  Putting these equalities together, we see that
  \[
    x_{\lambda;c}
    =
    \psi_{1;c}^*\left(x_{s_1\lambda;\muoc}\right)
    =
    \psi_{1;c}^*\left(\Delta_{V,s_1\lambda}\Big|_{\dbc{\muoc}}\right)
    =
    \Delta_{V,\lambda}\Big|_{\dbc{c}}
  \]
  as desired.

  Suppose now that $w\in W_{\langle 1 \rangle}$.
  By Corollary \ref{cor:canopy}, we have $\lambda_1 \geq 0$ and thus Lemma \ref{lem:rankredbygvec} gives
  \[
    x_{\lambda; c} = (x_{\omega_1; c})^{\lambda_1} \eta^*_{1;c}(x_{\lambda_{\langle 1\rangle}; s_1c}),
  \]
  where $\lambda_{\langle 1\rangle} = \sum_{i=2}^n \lambda_i \omega_i \in \cF_{s_1c}$.
  On the other hand, by Proposition \ref{prop:minorrankinduction}, we have
  \[
    \Delta_{V, \lambda}\Big|_{\dbc{c}}
    =
    \left(\Delta_{\omega_1}\Big|_{\dbc{c}}\right)^{\lambda_1}
    \eta_{1;c}^* \left(\Delta_{V_{\langle 1 \rangle}, \lambda_{\langle 1 \rangle}}\Big|_{\dbc{s_1c}_{\langle 1\rangle}}\right).
  \]
  By induction on rank, we have $x_{\lambda_{\langle 1 \rangle}; s_1c} = \Delta_{V_{\langle 1 \rangle}, \lambda_{\langle 1 \rangle}}\Big|_{\dbc{s_1c}_{\langle 1\rangle}}$.
  Finally, since $x_{\omega_1;c}~=~\Delta_{\omega_1}\Big|_{\dbc{c}}$, we conclude that
  \[
    x_{\lambda; c}
    =
    (x_{\omega_1; c})^{\lambda_1} \eta^*_{1;c}(x_{\lambda_{\langle 1 \rangle}; s_1c})
    =
    \left(\Delta_{\omega_1}\Big|_{\dbc{c}}\right)^{\lambda_1}  \eta_{1;c}^* \left(\Delta_{V_{\langle 1 \rangle}, \lambda_{\langle 1 \rangle}}\Big|_{\dbc{s_1c}_{\langle 1\rangle}}\right)
    =
    \Delta_{V, \lambda}\Big|_{\dbc{c}}.
  \]
  The other case, when $\lambda$ is in the opposite $c$-Cambrian fan, is established in exactly the same way.
\end{proof}

\begin{remark} 
  Theorem \ref{thm:mainthm2} can be extended a little bit further as follows.
  In general, beyond affine types, the seeds of $\cA_{dp}(c)$ obtained from the initial one by mutating along the sequence $(n,n-1,\dots,1)$ need not have $\bfg$-vector cone belonging to $\cD\cF_c$.
  Indeed, if this were the case, any doubled Cambian framework would be connected in the sense of \cite{RS15}.
  Nonetheless, the same argument we have used in the rank induction suffices to establish the result also for cluster monomials supported on these seeds, since they contain only cluster variables of the form $x_{\omega_i; c}$ or~$x_{-\omega_i; c}$.
\end{remark}

\section{Canonical bases in the case $A_1^{(1)}$}
\label{sec:bases}

Theorem~\ref{th:main} shows that for affine types all elements of the partial canonical basis of cluster monomials in $\cA_{dp}(c)$ are computable as generalized minors.
There are several completions of this partial basis to a full basis of $\cA_{dp}(c)$ that appear in the literature.
Some of these include:
\begin{itemize}
  \item the atomic basis \cite{CI12,DT13} for cluster algebras of finite and affine type;
  \item the greedy basis \cite{SZ04,LLZ14} for cluster algebras of rank 2;
  \item the generic basis \cite{Dup11,Dup12,Pla13} for skew-symmetric cluster algebras;
  \item the bracelets, bangles, and bands bases \cite{MSW13,T14} for cluster algebras from surfaces;
  \item the triangular basis \cite{BZ14} for acyclic cluster algebras;
  \item the theta basis \cite{GHKK14} for arbitrary cluster algebras.
\end{itemize}

While these bases have been constructed in diverse settings, in specific cases subsets of them can be shown to coincide (though not necessarily in a straightforward way, cf.~\cite{CGMMRSW15}).
For the cluster algebra of type $A_1^{(1)}\!$, for example, only three distinct bases appear in the literature.
In this case, the atomic, greedy, bracelets, and theta bases all coincide, the generic basis coincides with the bangles basis, and finally the bands basis and triangular basis coincide.

In any of these bases, the elements which are not cluster monomials are exactly those whose $\bfg$-vector is of the form~$n\omega^\circ$, where $\omega^\circ = -\omega_1+\omega_2$ and $n$ is a positive integer.
Note that, for consistency with the preceding sections, we index the nodes of the associated Dynkin diagram of type $A_1^{(1)}$ by $\{1,2\}$ rather than the standard $\{0,1\}$; by convention, we identify $1$ as the affine node.
We have the following explicit formulas for these basis elements in the presence of doubled principal coefficients.

\vspace{8pt}
\noindent{\bf Greedy basis} \cite[Theorem 5.4]{CZ06}:
  \begin{align*}
    x_{n\omega^\circ;c}^{gr}
    :=
    x_{\omega_1;c}^{-n} x_{\omega_2;c}^{-n}
    \left(
      \sum_{0 < k \leq m \leq n}
      \frac{n}{k} {m-1\choose k-1} {n-m+k-1\choose n-m}
      \, z_{\ol{1};c}^m z_{1;c}^m z_{\ol{2};c}^{m-k} z_{2;c}^{m-k} x_{\omega_1;c}^{2(m-k)} x_{\omega_2;c}^{2(n-m)}
      \right. \\ \left.
      \vphantom{\sum_{\substack{0\leq m \leq n\\ 1 \leq k \leq m}}}
      +
      x_{\omega_2;c}^{2n}
      +
      z_{\ol{1};c}^n z_{1;c}^n z_{\ol{2};c}^{n} z_{2;c}^{n} x_{\omega_1;c}^{2n}
    \right)
  \end{align*}

\noindent{\bf Triangular basis} \cite[Theorem 5.2]{CZ06}:
  \[
    x_{n\omega^\circ;c}^{tr}
    :=
    x_{\omega_1;c}^{-n} x_{\omega_2;c}^{-n}
    \sum_{0 \leq k \leq m \leq n}
    {m\choose k} {n-m+k\choose n-m}
    \, z_{\ol{1};c}^m z_{1;c}^m z_{\ol{2};c}^{m-k} z_{2;c}^{m-k} x_{\omega_1;c}^{2(m-k)} x_{\omega_2;c}^{2(n-m)}
  \]

\noindent{\bf Generic basis}:
  \[
    x_{n\omega^\circ;c}^{ge}
    :=
    x_{\omega_1;c}^{-n} x_{\omega_2;c}^{-n}
    \sum_{0 \leq k \leq m \leq n}
    {m\choose k} {n\choose m}
    \, z_{\ol{1};c}^m z_{1;c}^m z_{\ol{2};c}^{m-k} z_{2;c}^{m-k} x_{\omega_1;c}^{2(m-k)} x_{\omega_2;c}^{2(n-m)}
  \]

The first two formulas are obtained from those in \cite{CZ06} by inserting frozen variables so that the elements become homogeneous of the appropriate degree and are invariant when interchanging $z_{i;c}$ and $z_{\ol{\imath};c}$; the last is obtained directly by expanding the defining formula
\[
  x_{n\omega^\circ;c}^{ge}
  :=
  \left(x_{\omega^\circ;c}^{ge}\right)^n
  =
  \Big(x_{\omega_1;c}^{-1} x_{\omega_2;c}^{-1}  \big(z_{\ol{1};c} z_{1;c} z_{\ol{2};c} z_{2;c} x_{\omega_1;c}^{2} + z_{\ol{1};c} z_{1;c} + x_{\omega_2;c}^{2} \big) \Big)^n.
\]

In this section, we show that all elements in each of these bases of $\cA_{dp}(c)$ are restrictions of principal generalized minors of irreducible level-zero representations of $\wh{G} = \wh{LSL}_2$.
Such representations have continuous parameters and to obtain each of the different bases we need only choose different parameters.

\begin{remark}
  Outside of finite type, it need not be the case that all weight representations of $G$ extend to representations of the larger group $\wh{G}$.
  For example, in untwisted affine types, the evaluation representations of the centrally extended loop group $G = \widetilde{LG^\circ}$ do not admit an intertwining action of the group of loop rotations, hence do not extend to representations of $\wh{G} = \wh{LG^\circ}$.
  Thus the statement that the above basis elements are restrictions of minors of $\wh{LSL}_2$-representations is stronger than the statement that they are restrictions minors of $\widetilde{LSL}_2$-representations.
  Conversely, the statement in Theorem \ref{thm:mainthm2} that a Cambrian cluster monomial $x_{\lambda;c}$ is equal to the restriction of the minor $\Delta_{V,\lambda}$ for any $G$-representation $V$ is stronger than the corresponding statement with $\wh{G}$ in place of $G$.
\end{remark}

\begin{remark}
  \label{rem:quiver bases}
  These three bases for $A_1^{(1)}$ have all been given quiver-theoretic interpretations: the generic basis elements are defined inherently using generic regular quiver representations \cite{Dup11}, the bands/triangular basis elements were computed in \cite{CZ06} using indecomposable regular quiver representations, and the greedy/bracelets/theta basis elements were computed in \cite{CIE11} using only the smooth locus of quiver Grassmannians for indecomposable regular quiver representations.
\end{remark}

Let $V \cong \kk^2$ be the vector representation of $SL_2$ and let $\bfa=(a_1,\dots,a_n)$ be an $n$-tuple of elements in $\kk^\times$.
We denote by $V(\bfa)$ the irreducible representation of~$\wh{G} \cong \wh{LSL}_2$ whose underlying vector space is $\kk[u^{\pm 1}]\otimes V^{\otimes n}$ with the action given by
\begin{gather*}
  x_1(t) \Big( u^m \otimes v_1 \otimes \cdots \otimes v_n \Big) := u^{m+1} \otimes \left( \begin{array}{cc}1 & 0 \\ a_1 t & 1\end{array}\right) v_1  \otimes \cdots \otimes \left( \begin{array}{cc}1 & 0 \\ a_n t & 1\end{array}\right) v_n;
  \\
  x_{\ol{1}}(t) \Big( u^m \otimes v_1 \otimes \cdots \otimes v_n \Big) := u^{m-1} \otimes \left( \begin{array}{cc}1 & a_1^{-1} t \\ 0 & 1\end{array}\right) v_1  \otimes \cdots \otimes \left( \begin{array}{cc}1 & a_n^{-1} t \\ 0 & 1\end{array}\right) v_n;
  \\
  x_2(t) \Big( u^m \otimes v_1 \otimes \cdots \otimes v_n \Big) := u^m \otimes \left( \begin{array}{cc}1 & t \\ 0 & 1\end{array}\right) v_1  \otimes \cdots \otimes \left( \begin{array}{cc}1 & t \\ 0 & 1\end{array}\right) v_n;
  \\
  x_{\ol{2}}(t) \Big( u^m \otimes v_1 \otimes \cdots \otimes v_n \Big) := u^m \otimes \left( \begin{array}{cc}1 & 0 \\ t & 1\end{array}\right) v_1  \otimes \cdots \otimes \left( \begin{array}{cc}1 & 0 \\ t & 1\end{array}\right) v_n.
\end{gather*}
By \cite{Cha86, CP86}, any irreducible level-zero weight representation of $\wh{LSL}_2$ with finite-dimensional weight spaces is of this form.

Let $v \in V$ be a highest weight vector for $SL_2$ and set $v_{n \omega^\circ} := v \otimes \cdots \otimes v \in V(\bfa)$.
We then let $\Delta_{V(\bfa),n\omega^\circ}$ be the minor defined by choosing $\pi_{n \omega^\circ}$ so that $\pi_{n \omega^\circ}(u^m v_{n\omega^\circ}) = 0$ for $m \neq 0$.

\begin{proposition}
  \label{prop:imaginary minor}
  For any $n$-tuple $\bfa=(a_1,\dots,a_n)\in(\kk^\times)^n$, the value of $\Delta_{V(\bfa),n\omega^\circ}$ on
  \[
    g:=x_{\ol{1}}(t_{\ol{1}}) x_{\ol{2}}(t_{\ol{2}}) h_1^{\alpha_1^\vee}h_2^{\alpha_2^\vee} x_2(t_2) x_1(t_1)
  \]
  is equal to
  \[
    \Delta_{V(\bfa),n\omega^\circ}(g)
    =
    \sum_{0 \leq k \leq m \leq n} d_\bfa(m,k) \, t_{\ol{1}}^m t_{\ol{2}}^{m-k} h_1^{2k-n} h_2^{n-2k} t_2^{m-k} t_1^m,
  \]
  where
  \[
    d_\bfa(m,k)=\sum\limits_{r=0}^m {m-r\choose k} {n-2r\choose m-r} S_{\bfa,r}
    \qquad
    \text{and}
    \qquad
    S_{\bfa,r}=\sum\limits_{\substack{I,J\subseteq[1,n]\\|I|=r=|J|\\ I\cap J=\varnothing}}\frac{\prod_{i\in I} a_i}{\prod_{j\in J} a_j}.
  \]
  By the isomorphism of Proposition~\ref{prop:coordring}, this minor is identified with the following element of~$\cA_{dp}(c)$:
  \[
    x_{n\omega^\circ;c}^{\bfa}
    :=
    x_{\omega_1;c}^{-n} x_{\omega_2;c}^{-n}
    \sum_{0 \leq k \leq m \leq n} d_\bfa(m,k) \, z_{\ol{1};c}^m z_{1;c}^m z_{\ol{2};c}^{m-k} z_{2;c}^{m-k} x_{\omega_1;c}^{2(m-k)} x_{\omega_2;c}^{2(n-m)}.
  \]
\end{proposition}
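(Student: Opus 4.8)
The plan is to evaluate $\Delta_{V(\bfa),n\omega^\circ}(g)=\pi_{n\omega^\circ}\bigl(g\cdot v_{n\omega^\circ}\bigr)$ directly, by letting each of the five factors of $g$ act in turn on $v_{n\omega^\circ}=u^0\otimes v^{\otimes n}$ and reading off the coefficient of $u^0\otimes v^{\otimes n}$ at the end; this is legitimate because $v^{\otimes n}$ is the unique weight vector of $V^{\otimes n}$ of top $\varphi_2(SL_2)$-weight, so $\pi_{n\omega^\circ}$ is exactly the projection onto $\kk(u^0\otimes v^{\otimes n})$. Throughout, a basis vector of $V(\bfa)$ is bookkept by its $\kk[u^{\pm1}]$-degree together with the set $S\subseteq[1,n]$ of tensor slots equal to the lowest-weight vector $w$. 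One preliminary computation is needed: the vector attached to $S$ has $\langle\mu,\alpha_1^\vee\rangle=2|S|-n$ and $\langle\mu,\alpha_2^\vee\rangle=n-2|S|$, so $h_1^{\alpha_1^\vee}h_2^{\alpha_2^\vee}$ scales it by $h_1^{2|S|-n}h_2^{n-2|S|}$; this follows from the defining formulas, together with the observation that the affine generators $x_1(t),x_{\ol1}(t)$ shift the $\kk[u^{\pm1}]$-grading up, resp.\ down, by one for each slot they convert.

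Applying $x_1(t_1)$ to $v_{n\omega^\circ}$ produces, for each $S$, the term $t_1^{|S|}\bigl(\prod_{i\in S}a_i\bigr)$ times the vector of $u$-degree $|S|$ with $w$'s in $S$. Applying $x_2(t_2)$ then reverts a subset $S'\subseteq S$ of these $w$'s to $v$'s, contributing $t_2^{|S'|}$; applying $h_1^{\alpha_1^\vee}h_2^{\alpha_2^\vee}$ multiplies by $h_1^{2|S\setminus S'|-n}h_2^{n-2|S\setminus S'|}$; applying $x_{\ol2}(t_{\ol2})$ converts a fresh subset $R\subseteq[1,n]\setminus(S\setminus S')$ of $v$'s to $w$'s, contributing $t_{\ol2}^{|R|}$; and finally $x_{\ol1}(t_{\ol1})$ must convert every remaining $w$ back to $v$ to land on $v^{\otimes n}$, contributing $t_{\ol1}^{|S|}\prod_{i\in(S\setminus S')\cup R}a_i^{-1}$ while lowering the $u$-degree by $|(S\setminus S')\cup R|=|S|-|S'|+|R|$. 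Requiring this to reach $u^0$ forces $|R|=|S'|$. Collecting terms and cancelling the $a_i$ over $S$, I obtain
\[
  \Delta_{V(\bfa),n\omega^\circ}(g)=\sum_{\substack{S'\subseteq S\subseteq[1,n]\\ R\subseteq[1,n]\setminus(S\setminus S'),\ |R|=|S'|}} t_{\ol1}^{|S|}t_{\ol2}^{|S'|}h_1^{2|S\setminus S'|-n}h_2^{n-2|S\setminus S'|}t_2^{|S'|}t_1^{|S|}\,\frac{\prod_{i\in S'}a_i}{\prod_{i\in R}a_i},
\]
so the term indexed by $(S,S',R)$ contributes to the coefficient of $t_{\ol1}^m t_{\ol2}^{m-k}h_1^{2k-n}h_2^{n-2k}t_2^{m-k}t_1^m$ precisely when $|S|=m$ and $|S\setminus S'|=k$.

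It therefore remains to prove the combinatorial identity that the coefficient just described equals $d_\bfa(m,k)$, and this is the only real obstacle. The approach is to write $I:=S'\setminus R$ and $J:=R\setminus S'$, so that $I\cap J=\varnothing$, $|I|=|J|=:r$ (using $|S'|=|R|$), and the $a$-monomial attached to $(S,S',R)$ is $\prod_{i\in I}a_i/\prod_{j\in J}a_j$. Fixing a disjoint pair $(I,J)$ with $|I|=|J|=r$ and counting triples $(S,S',R)$ that produce it, one finds $S'=I\sqcup P$ and $R=J\sqcup P$ with $P:=S'\cap R$ of size $m-k-r$ disjoint from $I\cup J$, and $S\supseteq S'$ with $S\cap J=\varnothing$ and $|S|=m$; choosing $P$ inside $[1,n]\setminus(I\cup J)$ and then the remaining $k$ elements of $S$ inside $[1,n]\setminus(I\cup J\cup P)$ gives $\binom{n-2r}{m-k-r}\binom{n-m+k-r}{k}$ triples, and an elementary factorial manipulation rewrites this as $\binom{m-r}{k}\binom{n-2r}{m-r}$. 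Summing over $(I,J)$ yields $S_{\bfa,r}$ with this multiplicity, hence exactly $d_\bfa(m,k)$; the terms with $r>m-k$ vanish, consistent with the binomial coefficients.

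Finally, the identification with $x_{n\omega^\circ;c}^{\bfa}\in\cA_{dp}(c)$ is immediate. The element $g$ is a generic point in the image of the open embedding of Proposition~\ref{prop:shuffled factorizations} (for the shuffle $(\ol1,\ol2,2,1)$, with the torus factor placed in the middle as permitted by the remark following that proposition). Hence Proposition~\ref{prop:coordring} expresses the restrictions of $\Delta_{\omega_1},\Delta_{\omega_2}$ and of the frozen variables as explicit monomials in $h_1,h_2,t_1,t_2,t_{\ol1},t_{\ol2}$; inverting these relations turns each monomial $t_{\ol1}^m t_{\ol2}^{m-k}h_1^{2k-n}h_2^{n-2k}t_2^{m-k}t_1^m$ into $x_{\omega_1;c}^{-n}x_{\omega_2;c}^{-n}\,z_{\ol1;c}^m z_{1;c}^m z_{\ol2;c}^{m-k}z_{2;c}^{m-k}x_{\omega_1;c}^{2(m-k)}x_{\omega_2;c}^{2(n-m)}$, giving the stated expression for $x_{n\omega^\circ;c}^{\bfa}$.
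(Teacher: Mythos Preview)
Your argument is correct and carries out in full detail the ``simple computation'' to which the paper's own proof alludes; the combinatorial bookkeeping via the subsets $S\supseteq S'$ and $R$, the derivation of the constraint $|R|=|S'|$, the change of variables $I=S'\setminus R$, $J=R\setminus S'$, $P=S'\cap R$, and the factorial identity $\binom{n-2r}{m-k-r}\binom{n-m+k-r}{k}=\binom{m-r}{k}\binom{n-2r}{m-r}$ are all verified without difficulty. One caveat worth flagging: your assertion that $x_1(t)$ and $x_{\ol1}(t)$ shift the $u$-degree ``by one for each slot they convert'' is the correct Lie-theoretic action (coming from $e_1=f\otimes z$, $f_1=e\otimes z^{-1}$ acting on the loop module), but it does \emph{not} follow from the displayed formulas in the paper as literally written---those formulas shift $u^m$ to $u^{m\pm1}$ independently of how many slots change, which would not even give a group action (e.g.\ $x_1(0)$ would not act as the identity). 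The paper's displayed action is evidently a typo; your interpretation is the intended one, as it is the only one consistent both with the representation being a $\wh{LSL}_2$-module and with the formula asserted in the proposition.
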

\begin{proof}
  The first claim is a simple computation parsing the action of~$g$ on $v_{n \omega^\circ}$ using the definition.
  The second claim then follows directly by applying the isomorphism of Proposition~\ref{prop:coordring}.
\end{proof}

\begin{remark}
  Independent of the chosen parameters $\bfa=(a_1,\ldots,a_n)$, we have that $d_\bfa(0,0)=1$ and $d_\bfa(n,0)=1$.
  The first equality is obvious.
  To see the other, observe that ${n-2r\choose n-r}$ is non-zero only if $n-2r \geq n-r$, i.e.~if $r=0$.
  More generally, by subsuming the summation over $r$ into the subsets $I,J \subseteq [1,n]$, the coefficient $d_\bfa(m,0)$ is given by the ratio
  \[
    d_\bfa(m,0)
    =
    \sum\limits_{\substack{I,J\subseteq[1,n]\\|I|=m=|J|}}\frac{\prod_{i\in I} a_i}{\prod_{j\in J} a_j}
    =
    \frac{1}{\prod\limits_{\ell\in [1,n]} a_\ell} \sum\limits_{\substack{I,J\subseteq[1,n]\\|I|=m=|J|}} \prod_{i\in I} a_i \prod_{j\not\in J} a_j
    =
    \frac{e_m(\bfa) e_{n-m}(\bfa)}{e_n(\bfa)},
  \]
  where $e_\ell(\bfa)$ is the elementary symmetric polynomial of degree $\ell$ evaluated on $\bfa$.
  In what follows, we will want to impose conditions on the coefficients $d_\bfa(m,0)$.
  Since the ring of symmetric polynomials is generated by the independent elements $e_\ell$, $\ell\in[1,n]$, to see that the conditions we want are satisfied it will be enough to give values to these rather than solving explicitly for the parameters $\bfa$.
\end{remark}

\begin{theorem}
  Choose a point $\bfa\in(\kk^\times)^n$ for each $n\geq1$.
  Then the elements $\{x_{n\omega^\circ;c}^{\bfa}\}_{n\geq1}$ together with all cluster monomials form a linear basis of $\cA_{dp}(c)$.
\end{theorem}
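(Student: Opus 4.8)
The plan is to show that the family $\{x_{n\omega^\circ;c}^{\bfa}\}_{n\geq1}$ together with the cluster monomials is a linear basis of $\cA_{dp}(c)$ by a triangularity argument in the $\bfg$-vector grading. First I would recall that $\cA_{dp}(c)$ is graded by the weight lattice $P$, and that every homogeneous component is finite-dimensional; moreover the cluster monomials whose $\bfg$-vectors lie in the $\bfg$-vector fan (equivalently, outside the ray $\RR_{>0}\omega^\circ$ and $\RR_{<0}\omega^\circ$, plus those supported on the same cone as a nonnegative multiple of $\omega^\circ$) already span a complementary graded piece. The point is that in type $A_1^{(1)}$ the only weights that are \emph{not} $\bfg$-vectors of cluster monomials are the $n\omega^\circ$ for $n\geq1$ (this is exactly the fact recalled at the start of the section, and it follows from Theorem~\ref{thm:cambrian} together with the structure of the $A_1^{(1)}$ $\bfg$-vector fan). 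So it suffices to show that, for each fixed $n\geq1$, the element $x_{n\omega^\circ;c}^{\bfa}$ together with the finitely many cluster monomials whose $\bfg$-vector equals $n\omega^\circ$ spans the graded component $\cA_{dp}(c)_{n\omega^\circ}$ — but in fact there are \emph{no} cluster monomials of $\bfg$-vector $n\omega^\circ$, so the claim reduces to: $x_{n\omega^\circ;c}^{\bfa}$ is a nonzero element of the one-dimensional-modulo-lower-terms piece, i.e. its ``leading coefficient'' is nonzero.

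Next I would make the triangularity precise. Any of the known bases — say the generic basis $\{x_{n\omega^\circ;c}^{ge}\}$, whose basis property is already established in the literature and may be cited — expresses $\cA_{dp}(c)$ as cluster monomials plus one new element per $n$. Writing $x_{n\omega^\circ;c}^{\bfa}$ and $x_{n\omega^\circ;c}^{ge}$ in the common Laurent expansion $x_{\omega_1;c}^{-n}x_{\omega_2;c}^{-n}(\cdots)$ of Proposition~\ref{prop:imaginary minor}, both are supported on the same monomials $z_{\ol{1};c}^m z_{1;c}^m z_{\ol{2};c}^{m-k} z_{2;c}^{m-k} x_{\omega_1;c}^{2(m-k)} x_{\omega_2;c}^{2(n-m)}$ indexed by $0\le k\le m\le n$, with coefficients $d_\bfa(m,k)$ versus $\binom{m}{k}\binom{n}{m}$. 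The ``extremal'' monomial $m=n,k=0$ has coefficient $d_\bfa(n,0)=1$ for both (by the Remark following Proposition~\ref{prop:imaginary minor}), and similarly $d_\bfa(0,0)=1$. The key structural fact is that a Laurent monomial of this shape with $(m,k)\ne(n,0)$ is a lower-order term with respect to the $\bfg$-vector: concretely, if we expand each such Laurent monomial back into the standard cluster and compare against the cluster-monomial basis, the decomposition is unitriangular with respect to a suitable term order on the exponents (this is exactly the content of \cite{CZ06} and the greedy/triangular basis formalism, which I would invoke rather than reprove). Hence $x_{n\omega^\circ;c}^{\bfa}$ differs from $x_{n\omega^\circ;c}^{ge}$ by a $\ZZ$-linear combination of cluster monomials, and the change-of-basis matrix between $\{x_{n\omega^\circ;c}^{\bfa}\}\cup\{\text{cluster monomials}\}$ and $\{x_{n\omega^\circ;c}^{ge}\}\cup\{\text{cluster monomials}\}$ is block-unitriangular with identity diagonal blocks, hence invertible.

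To run this cleanly I would organize it as: (i) recall that $\{x_{n\omega^\circ;c}^{ge}\}_{n\ge1}\cup\{\text{cluster monomials}\}$ is a basis of $\cA_{dp}(c)$ (cite \cite{Dup11,Dup12}, or use the atomic/greedy/triangular basis of \cite{CZ06,CI12,DT13}); (ii) observe via Proposition~\ref{prop:imaginary minor} that $x_{n\omega^\circ;c}^{\bfa}-x_{n\omega^\circ;c}^{ge}$ lies in the span of cluster monomials, because the difference
\[
  x_{\omega_1;c}^{-n} x_{\omega_2;c}^{-n}\!\!\sum_{0 \leq k \leq m \leq n}\!\!\big(d_\bfa(m,k)-\tbinom{m}{k}\tbinom{n}{m}\big)\, z_{\ol{1};c}^m z_{1;c}^m z_{\ol{2};c}^{m-k} z_{2;c}^{m-k} x_{\omega_1;c}^{2(m-k)} x_{\omega_2;c}^{2(n-m)}
\]
is, termwise, a product of cluster variables from the clusters adjacent to the initial one (each such Laurent monomial being, up to the frozen factors, a cluster monomial in $\{x_{\omega_1;c},x_{\omega_2;c},x_{-\omega_1+\omega_2;c},x_{\omega_1-\omega_2;c}\}$-type variables — this is again standard rank-two bookkeeping and I would cite it); and (iii) conclude that the transition matrix is unitriangular, so the new family is a basis. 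The main obstacle, and the only step requiring genuine care, is step (ii): making rigorous that every Laurent monomial appearing in the expansion of Proposition~\ref{prop:imaginary minor} with $(m,k)\ne(n,0),(0,0)$ genuinely lies in the span of cluster monomials (rather than merely being a Laurent polynomial), and that the pairing with the extremal term is what controls triangularity. In type $A_1^{(1)}$ this is known — it is implicit in \cite{CZ06} and in the greedy basis literature \cite{SZ04,LLZ14} — so I expect to dispatch it by a careful citation and a short computation matching the exponent of $x_{\omega_2;c}$ (namely $2(n-m)\ge0$) with the ``imaginary root'' direction, rather than by a from-scratch argument.
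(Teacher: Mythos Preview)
Your triangularity strategy is reasonable in spirit, but step~(ii) as you state it is false, and the justification you give for it is incoherent.

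First, the justification: you assert that each Laurent monomial appearing in the difference $x_{n\omega^\circ;c}^{\bfa}-x_{n\omega^\circ;c}^{ge}$ is, up to frozen factors, a cluster monomial. But the individual monomials have the form $z_{\ol{1};c}^m z_{1;c}^m z_{\ol{2};c}^{m-k} z_{2;c}^{m-k} x_{\omega_1;c}^{2(m-k)-n} x_{\omega_2;c}^{n-2m}$, and for generic $(m,k)$ these have negative exponents on the unfrozen variables. Such a Laurent monomial is not a cluster monomial and is not even an element of $\cA_{dp}(c)$; only the full sum lies in the algebra. So ``termwise a cluster monomial'' cannot be what establishes~(ii).

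Second, the claim itself fails. Take $\bfa$ satisfying the greedy hypothesis, so $x_{n\omega^\circ;c}^{\bfa}=x_{n\omega^\circ;c}^{gr}$. Specializing all frozen variables to $1$, one has the Chebyshev relation $x_{3\omega^\circ;c}^{gr}=(x_{\omega^\circ;c}^{ge})^{3}-3\,x_{\omega^\circ;c}^{ge}$, whence $x_{3\omega^\circ;c}^{gr}-x_{3\omega^\circ;c}^{ge}=-3\,x_{\omega^\circ;c}^{ge}$. This is not a combination of cluster monomials: it is (minus three times) the imaginary basis element at $n=1$. Since specialization takes cluster monomials to cluster monomials, the same obstruction persists with doubled principal coefficients. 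Thus the transition matrix is not block-diagonal as you claim; it is only upper triangular with $1$'s on the diagonal, with genuine off-diagonal entries in the imaginary block.

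You can still conclude from this weaker unitriangularity, but proving it requires exactly the ingredient you have not supplied: that when $x_{n\omega^\circ;c}^{\bfa}$ is expanded in the reference basis, the coefficient of $x_{n\omega^\circ;c}^{ge}$ is~$1$. This follows because both elements share the same leading Laurent monomial with coefficient~$1$ --- i.e.\ both are \emph{pointed} at $n\omega^\circ$. But once you observe that, there is no need for a reference basis at all: the paper simply checks $d_{\bfa}(n,n)=1$, notes that the cluster monomials together with the $x_{n\omega^\circ;c}^{\bfa}$ form a complete bounded collection of pointed elements, and invokes \cite[Proposition~2.9]{Rup13} directly. Your detour through the generic basis and the termwise claim in~(ii) is both unnecessary and, as written, incorrect.
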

\begin{proof}
  First of all observe that, independent of the chosen points $\bfa$, we have $d_{\bfa}(n,n)=1$.
  In particular, each element in $\{x_{n\omega^\circ;c}^{\bfa}\}_{n\geq1}$ is pointed in the sense of \cite[Definition 2.8]{Rup13}.
  The set of cluster monomials, together with $\{x_{n\omega^\circ;c}^{\bfa}\}_{n\geq1}$, form a complete bounded collection of pointed elements and we can apply \cite[Proposition 2.9]{Rup13} to deduce the claim.
\end{proof}

The following result will feature prominently in the computations below.
\begin{lemma}
  \label{le:binomial identity}
  For $a,b,c\in\ZZ_{\ge0}$ with $0\le b,c\le a$ we have
  \[
		\sum\limits_{r=0}^{\min(b,c)}(-1)^r{b\choose r}{a-r\choose c-r}={a-b\choose c}.
	\]
\end{lemma}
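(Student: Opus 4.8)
The plan is to prove the claim by coefficient extraction in the polynomial ring $\ZZ[x]$; this route also makes transparent the role of the hypothesis $b\le a$. At the end I indicate an alternative inclusion--exclusion argument.

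First I would record that for $0\le r\le b$ the coefficient of $x^c$ in $x^r(1+x)^{a-r}$ equals $\binom{a-r}{c-r}$: when $r\le c$ it is $[x^{c-r}](1+x)^{a-r}=\binom{a-r}{c-r}$ (note $a-r\ge a-b\ge 0$, so $(1+x)^{a-r}\in\ZZ[x]$), while when $r>c$ both quantities vanish. Since also $\binom{b}{r}=0$ for $r>b$, enlarging the range of summation in the stated identity to $0\le r\le b$ changes nothing; pulling the coefficient extraction outside the resulting finite sum turns its left-hand side into
\[
  [x^c]\sum_{r=0}^{b}(-1)^r\binom{b}{r}x^r(1+x)^{a-r}.
\]

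Next I would simplify the polynomial in the sum. Writing $a-r=(a-b)+(b-r)$, with both summands non-negative, I can factor $(1+x)^{a-b}$ out of every term and apply the binomial theorem:
\begin{align*}
  \sum_{r=0}^{b}(-1)^r\binom{b}{r}x^r(1+x)^{a-r}
  &=(1+x)^{a-b}\sum_{r=0}^{b}\binom{b}{r}(-x)^r(1+x)^{b-r}\\
  &=(1+x)^{a-b}\big((1+x)-x\big)^{b}=(1+x)^{a-b}.
\end{align*}
Extracting the coefficient of $x^c$ from $(1+x)^{a-b}$ now yields $\binom{a-b}{c}$ exactly, which is the stated identity; this is a genuine binomial coefficient because $a-b\ge 0$, and it correctly equals $0$ when $c>a-b=\deg(1+x)^{a-b}$.

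There is no real obstacle here: the only care needed is the bookkeeping that enlarging the summation range to $[0,b]$ introduces only vanishing terms and that every exponent occurring stays non-negative, so that one never leaves $\ZZ[x]$ --- and $b\le a$ is precisely what guarantees this. A reader preferring a bijective argument may instead observe that $\binom{a-b}{c}$ counts the $c$-element subsets of $[1,a]$ disjoint from a fixed $b$-element subset $B$, and apply inclusion--exclusion over the subsets of $B$ met by a chosen $c$-set, using that $\binom{a-r}{c-r}$ counts the $c$-element subsets of $[1,a]$ containing a given $r$-element subset of $B$.
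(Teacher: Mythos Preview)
Your proof is correct. The paper itself does not give a detailed argument: it simply asserts that the identity follows from a combinatorial inclusion--exclusion and leaves it as an exercise. Your primary route is different --- you work algebraically, recognizing the left-hand side as the coefficient of $x^c$ in $\sum_{r=0}^{b}(-1)^r\binom{b}{r}x^r(1+x)^{a-r}$ and then collapsing this polynomial to $(1+x)^{a-b}$ via the binomial theorem. This is clean and self-contained, and your bookkeeping about extending the range of summation and keeping all exponents non-negative (using $b\le a$) is handled carefully. The inclusion--exclusion sketch you append at the end is exactly the argument the paper has in mind: counting $c$-subsets of $[1,a]$ disjoint from a fixed $b$-subset $B$ by sieving over which elements of $B$ they contain. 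So you have in fact supplied both the paper's intended proof and an independent algebraic one; either would suffice, and the generating-function version has the mild advantage of being entirely mechanical once the coefficient interpretation is set up.
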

\begin{proof}
  This can be proven using a combinatorial inclusion-exclusion argument that we leave as an exercise for the reader.
\end{proof}

Observe that, independent of the chosen point $\bfa\in(\kk^\times)^n$, we have $S_{\bfa,0}=1$ and $S_{\bfa,r}=0$ for any $r>\lfloor n/2\rfloor$ and thus the only terms contributing to the definition of $d_\bfa(m,k)$ are those for which~$r\leq \min(m,\lfloor n/2 \rfloor)$.
\begin{lemma}
  \label{le:a sums1}
  For $\bfa=(a_1,\dots,a_n)$, assume that $e_\ell(\bfa)=0$ whenever $\ell \in \big[1,\lfloor n/2\rfloor\big]$.
  Then for~$r\in\big[1 ,\lfloor n/2 \rfloor\big]$, we have $S_{\bfa,r}=(-1)^r\frac{n}{n-r}{n-r\choose r}$.
\end{lemma}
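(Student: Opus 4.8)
\section*{Proof proposal for Lemma \ref{le:a sums1}}

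The plan is to compute $S_{\bfa,r}$ by packaging it as a coefficient of a generating function, factoring that generating function using the quadratic $xt^2+t+y$, and then isolating the surviving contribution using the vanishing hypothesis on the $e_\ell(\bfa)$. (The case $n=1$ is vacuous since then $\lfloor n/2\rfloor=0$, so we may assume $n\ge2$.)

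First I would introduce the two-variable generating function $F(x,y):=\prod_{k=1}^n\bigl(1+xa_k+ya_k^{-1}\bigr)$, whose coefficient of $x^ry^r$ is exactly $S_{\bfa,r}$: choosing which factors contribute $xa_k$ (indices in $I$), which contribute $ya_k^{-1}$ (indices in $J$), and which contribute $1$ reproduces precisely the sum defining $S_{\bfa,r}$. Writing $t_\pm=\tfrac{-1\pm\sqrt{1-4xy}}{2x}$ for the two roots of $xt^2+t+y$, an elementary factorization gives $1+xa+ya^{-1}=\tfrac{x}{a}(a-t_+)(a-t_-)$, hence $F(x,y)=\tfrac{x^n}{e_n(\bfa)}P(t_+)P(t_-)$, where $P(t)=\prod_k(t-a_k)=\sum_{j=0}^n(-1)^je_j(\bfa)t^{n-j}$. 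The useful change of variables is $u:=xt_+$, $v:=xt_-$, which satisfy $u+v=-1$ and $uv=xy$; thus $u$ and $v$ are honest power series in $z:=xy$, and in fact $u=-zC(z)$ and $v=-C(z)^{-1}$, where $C(z)=\tfrac{1-\sqrt{1-4z}}{2z}$ is the Catalan generating function. Expanding $x^nP(t_+)P(t_-)=\sum_{j,l}(-1)^{j+l}e_j(\bfa)e_l(\bfa)\,x^{\,j+l-n}u^{n-j}v^{n-l}$ and using that each $u^{n-j}v^{n-l}$ is a series in $z=xy$, only the diagonal terms $j+l=n$ contribute to the coefficient of $x^ry^r$, which yields $S_{\bfa,r}=\tfrac{(-1)^n}{e_n(\bfa)}\sum_{j=0}^ne_j(\bfa)e_{n-j}(\bfa)\,[z^r]\bigl(u^{n-j}v^{j}\bigr)$.

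This is where the hypothesis enters: $e_j(\bfa)\ne0$ forces $j=0$ or $j\ge\lfloor n/2\rfloor+1$, and symmetrically $e_{n-j}(\bfa)\ne0$ forces $j=n$ or $j\le\lceil n/2\rceil-1$, and these are incompatible for $0<j<n$; so only $j\in\{0,n\}$ survive, leaving $S_{\bfa,r}=(-1)^n\bigl([z^r]u^n+[z^r]v^n\bigr)$. Since $u^n=(-1)^nz^nC(z)^n$ has order $n>r$, the first term vanishes; since $v^n=(-1)^nC(z)^{-n}$, we are left with $S_{\bfa,r}=[z^r]C(z)^{-n}$. I would finish by Lagrange inversion applied to $w:=C(z)-1$, which satisfies $w=z(1+w)^2$: taking $\Phi(w)=(1+w)^{-n}=C(z)^{-n}$ gives $[z^r]C(z)^{-n}=\tfrac1r[w^{r-1}]\Phi'(w)(1+w)^{2r}=\tfrac{-n}{r}\binom{2r-n-1}{r-1}$, and since $r\le\lfloor n/2\rfloor$ makes $2r-n-1<0$ one rewrites $\binom{2r-n-1}{r-1}=(-1)^{r-1}\binom{n-r-1}{r-1}$ and simplifies factorials to reach $(-1)^r\tfrac{n}{n-r}\binom{n-r}{r}$, as claimed; keeping the coefficient in the $\tfrac{n}{r}$-form avoids the apparent division by zero when $n=2r$.

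The main obstacle, I expect, is the bookkeeping in the coefficient extraction: one must check that $u$ and $v$ really are power series in $z=xy$ with the stated closed forms, that precisely the diagonal terms $j+l=n$ contribute to $[x^ry^r]$, and that the hypothesis eliminates exactly the cross terms $0<j<n$. By contrast the two binomial manipulations and the Lagrange inversion are routine, and the companion identity (Lemma~\ref{le:binomial identity}) is available should one prefer to reorganize the final step combinatorially.
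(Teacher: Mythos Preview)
Your argument is correct, but it takes a genuinely different route from the paper's proof. The paper never touches generating functions: it uses the Remark preceding the lemma to translate the hypothesis into $d_\bfa(m,0)=0$ for $m\in[1,n-1]$, reads the definition $d_\bfa(m,0)=\sum_r\binom{n-2r}{m-r}S_{\bfa,r}$ as a unitriangular linear system for the $S_{\bfa,r}$, and solves it by induction on $r$ with a single application of Lemma~\ref{le:binomial identity} at each step. Your approach instead packages $S_{\bfa,r}$ as $[x^ry^r]\prod_k(1+xa_k+ya_k^{-1})$, factors through the quadratic $xt^2+t+y$, and reduces to the coefficient $[z^r]C(z)^{-n}$, finishing with Lagrange inversion.

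What each buys: the paper's induction is entirely elementary, stays inside the framework already set up (the $d_\bfa(m,k)$ and Lemma~\ref{le:binomial identity}), and the same template is reused verbatim for Lemmas~\ref{le:a sums2} and~\ref{le:a sums3}. Your approach is more structural and explains \emph{why} the closed form is the Chebyshev-type number $(-1)^r\frac{n}{n-r}\binom{n-r}{r}$: once you reach $S_{\bfa,r}=(-1)^n[z^r](u^n+v^n)$ with $u+v=-1$, $uv=z$, this is literally the Girard--Waring identity $u^n+v^n=\sum_r(-1)^r\frac{n}{n-r}\binom{n-r}{r}(u+v)^{n-2r}(uv)^r$, so you could bypass Lagrange inversion entirely. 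A minor remark: your aside about ``division by zero when $n=2r$'' is unnecessary, since $n-r=r\ge1$ there; and your bookkeeping worries are unfounded, as the sum over $(j,l)$ is finite so termwise coefficient extraction in the Laurent expansion is justified.
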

\begin{proof}
  The assumptions imply that $d_\bfa(m,0)=0$ for $m \in [1, n-1]$.
  Since the given formula satisfies $S_{\bfa,0}=1$, we may prove the result by induction on $r$.
  Suppose $S_{\bfa,r}=(-1)^r\frac{n}{n-r}{n-r\choose r}$ for~$0\le r < m \leq \lfloor n/2 \rfloor$.
  Then the condition $d_\bfa(m,0)=0$ gives
  \[
    S_{\bfa,m}=\sum\limits_{r=0}^{m-1} (-1)^{r+1}\frac{n}{n-r} {n-2r\choose m-r} {n-r\choose r}.
  \]
  But we have
  \[
    \frac{n}{n-r} {n-2r\choose m-r} {n-r\choose r}=\frac{n(n-r-1)!}{(m-r)!(n-m-r)!r!}=\frac{n}{n-m} {n-m\choose r} {n-r-1\choose m-r}
  \]
  so that
  \[
    S_{\bfa,m}=\frac{n}{n-m}\sum\limits_{r=0}^{m-1} (-1)^{r+1} {n-m\choose r} {n-r-1\choose m-r}.
  \]
  Applying Lemma~\ref{le:binomial identity} with $a=n-1$, $b=n-m$, and $c=m$ gives
  \[
    \sum\limits_{r=0}^{m-1} (-1)^{r+1} {n-m\choose r} {n-r-1\choose m-r} = (-1)^m {n-m \choose m}
  \]
  and thus
  \[
    S_{\bfa,m}=(-1)^m\frac{n}{n-m}{n-m\choose m}
  \]
  as desired.
\end{proof}

\begin{lemma}
  \label{le:a sums2}
  For $\bfa=(a_1,\dots,a_n)$, assume that $e_\ell(\bfa)=1$ whenever $\ell\in[1,n]$.
  Then we have~$S_{\bfa,r}=(-1)^r {n-r\choose r}$ for~$r\in\big[1,\lfloor n/2 \rfloor\big]$.
\end{lemma}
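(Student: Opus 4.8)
The plan is to follow the same template as the proof of Lemma~\ref{le:a sums1}, with the vanishing conditions there replaced by the normalization $d_\bfa(m,0)=1$. First I would observe that $e_\ell(\bfa)=1$ for $\ell\in[1,n]$, together with $e_0(\bfa)=1$, forces $d_\bfa(m,0)=e_m(\bfa)e_{n-m}(\bfa)/e_n(\bfa)=1$ for every $m\in[0,n]$, using the formula for $d_\bfa(m,0)$ established in the remark following Proposition~\ref{prop:imaginary minor}. This is the only place the hypothesis on $\bfa$ enters.

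Next I would invoke the relation $d_\bfa(m,0)=\sum_{r=0}^m\binom{n-2r}{m-r}S_{\bfa,r}$ coming from Proposition~\ref{prop:imaginary minor} (the case $k=0$ of the formula for $d_\bfa(m,k)$). For $m\le\lfloor n/2\rfloor$ the $r=m$ term equals $\binom{n-2m}{0}S_{\bfa,m}=S_{\bfa,m}$, so this is a triangular recursion anchored by $S_{\bfa,0}=1$, and it determines all $S_{\bfa,r}$ with $r\le\lfloor n/2\rfloor$. Inducting on $r$ and assuming $S_{\bfa,r}=(-1)^r\binom{n-r}{r}$ for $0\le r<m$, the recursion together with $d_\bfa(m,0)=1$ yields $S_{\bfa,m}=1-\sum_{r=0}^{m-1}(-1)^r\binom{n-2r}{m-r}\binom{n-r}{r}$.

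The heart of the argument is then the binomial identity $\sum_{r=0}^{m}(-1)^r\binom{n-2r}{m-r}\binom{n-r}{r}=1$ for $0\le m\le\lfloor n/2\rfloor$. I would prove it by first recording the elementary factorial rearrangement $\binom{n-2r}{m-r}\binom{n-r}{r}=\binom{n-r}{m-r}\binom{n-m}{r}$, valid whenever $0\le r\le m\le\lfloor n/2\rfloor$ (so that all four binomials have nonnegative entries), and then applying Lemma~\ref{le:binomial identity} with $a=n$, $b=n-m$, $c=m$; since $\min(b,c)=m$, this gives $\sum_{r=0}^m(-1)^r\binom{n-m}{r}\binom{n-r}{m-r}=\binom{m}{m}=1$. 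Isolating the $r=m$ term, which equals $(-1)^m\binom{n-m}{m}$, shows $\sum_{r=0}^{m-1}(-1)^r\binom{n-2r}{m-r}\binom{n-r}{r}=1-(-1)^m\binom{n-m}{m}$, hence $S_{\bfa,m}=(-1)^m\binom{n-m}{m}$, closing the induction. The one step needing any thought is spotting the rearrangement that makes Lemma~\ref{le:binomial identity} directly applicable; everything else is bookkeeping entirely parallel to the proof of Lemma~\ref{le:a sums1}, and I would sanity-check the cases $m=0$ and $m=1$ (which give $S_{\bfa,1}=1-n=-(n-1)$) against the claimed formula before writing the proof out in full.
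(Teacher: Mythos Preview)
Your proposal is correct and follows essentially the same approach as the paper: both deduce $d_\bfa(m,0)=1$ from the hypothesis, set up the same induction, perform the same factorial rearrangement $\binom{n-2r}{m-r}\binom{n-r}{r}=\binom{n-r}{m-r}\binom{n-m}{r}$, and apply Lemma~\ref{le:binomial identity} with $a=n$, $b=n-m$, $c=m$. The only cosmetic difference is that the paper computes $S_{\bfa,m}-1$ directly whereas you first prove the full sum $\sum_{r=0}^m(-1)^r\binom{n-2r}{m-r}\binom{n-r}{r}=1$ and then isolate the top term.
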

\begin{proof}
  The assumptions imply that $d_\bfa(m,0)=1$ for $m\in [1,n-1]$.
  We again note that $S_{\bfa,0}=1$ and work by induction on $r$.
  Suppose $S_{\bfa,r}=(-1)^r {n-r\choose r}$ for $0\le r<m \leq \lfloor n/2 \rfloor$.
  Now the condition $d_\bfa(m,0)=1$ gives
  \[
    S_{\bfa,m}-1
    =
    \sum\limits_{r=0}^{m-1} (-1)^{r+1} {n-2r\choose m-r} {n-r\choose r}
    =
    \sum\limits_{r=0}^{m-1} (-1)^{r+1}{n-m\choose r}{n-r\choose m-r},
  \]
  where we used the identity ${a-b\choose c}{a \choose b}={a-c \choose b}{a\choose c}$.
  Applying Lemma~\ref{le:binomial identity} with $a=n$, $b=n-m$, and~$c=m$ we get
  \[
    \sum\limits_{r=0}^{m-1} (-1)^{r+1}{n-m\choose r}{n-r\choose m-r}
    =
    (-1)^m {n-m \choose m} -1
  \]
  and we conclude that
  \[
    S_{\bfa,m}-1=(-1)^m{n-m\choose m}-1.
  \]
\end{proof}

\begin{lemma}
  \label{le:a sums3}
  For $\bfa=(a_1,\dots,a_n)$, assume that $e_\ell(\bfa)=1/\ell!$ whenever $\ell\in[1,n]$.
  Then we have~$S_{\bfa,r}=0$ for $r\in\big[1, \lfloor n/2 \rfloor\big]$.
\end{lemma}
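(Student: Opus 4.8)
The plan is to mimic the inductive template used in the proofs of Lemmas~\ref{le:a sums1} and~\ref{le:a sums2}, exploiting the fact that the hypothesis $e_\ell(\bfa)=1/\ell!$ produces an especially transparent value for the coefficients $d_\bfa(m,0)$. Indeed, this is the simplest of the three $S_{\bfa,r}$-lemmas: unlike the other two, it will not require the binomial identity of Lemma~\ref{le:binomial identity}.

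First I would evaluate $d_\bfa(m,0)$. By the formula $d_\bfa(m,0)=e_m(\bfa)e_{n-m}(\bfa)/e_n(\bfa)$ from the remark following Proposition~\ref{prop:imaginary minor}, the assumption $e_\ell(\bfa)=1/\ell!$ for $\ell\in[1,n]$ (together with $e_0(\bfa)=1$) gives $d_\bfa(m,0)=\frac{n!}{m!\,(n-m)!}=\binom{n}{m}$ for all $m\in[0,n]$, the cases $m=0$ and $m=n$ reducing to the elementary equalities $d_\bfa(0,0)=d_\bfa(n,0)=1$.

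Then I would induct on $r$, exactly as in the previous two lemmas. The base case $S_{\bfa,0}=1$ holds for any $\bfa$, as recorded in the observation preceding Lemma~\ref{le:a sums1}. For the inductive step, suppose $S_{\bfa,r}=0$ for every $r$ with $1\le r<m$, where $m\le\lfloor n/2\rfloor$. Expanding $d_\bfa(m,0)=\sum_{r=0}^{m}\binom{m-r}{0}\binom{n-2r}{m-r}S_{\bfa,r}=\sum_{r=0}^{m}\binom{n-2r}{m-r}S_{\bfa,r}$, the summands with $1\le r\le m-1$ vanish by the inductive hypothesis, the $r=0$ summand equals $\binom{n}{m}$, and the $r=m$ summand equals $\binom{n-2m}{0}S_{\bfa,m}=S_{\bfa,m}$. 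Comparing with the value $d_\bfa(m,0)=\binom{n}{m}$ obtained in the first step forces $S_{\bfa,m}=0$, closing the induction.

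The only point requiring any care is the index bookkeeping: the restriction $m\le\lfloor n/2\rfloor$ keeps $n-2m\ge0$, so that the $r=m$ term is genuinely present, and no term with $r>\lfloor n/2\rfloor$ contributes (again by the observation preceding Lemma~\ref{le:a sums1}, where $S_{\bfa,r}=0$ automatically and $d_\bfa(m,k)$ only receives contributions from $r\le\min(m,\lfloor n/2\rfloor)$). There is no substantive obstacle here; the entire content of the lemma is carried by the evaluation $d_\bfa(m,0)=\binom{n}{m}$, after which the claim drops out of the same recursion already used twice above.
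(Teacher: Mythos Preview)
Your proof is correct and follows essentially the same approach as the paper: evaluate $d_\bfa(m,0)=\binom{n}{m}$ from the hypothesis, then run the same recursion on $S_{\bfa,r}$ as in Lemmas~\ref{le:a sums1} and~\ref{le:a sums2}, noting that here no appeal to Lemma~\ref{le:binomial identity} is needed. The only cosmetic difference is that the paper treats the case $m=1$ separately before stating the induction, whereas you fold it into a single inductive step; your handling of the index bookkeeping is slightly more explicit than the paper's.
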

\begin{proof}
  The assumptions imply that $d(m,0)={n\choose m}$ for $m\in[1,n-1]$.
  From $d_\bfa(1,0) = {n \choose 1}$ and~$S_{\bfa,0} =1$, we get
  \[
    S_{\bfa,1} = {n \choose 1} - {n \choose 1}S_{\bfa,0} = 0
  \]
  and we can again work by induction.
  Suppose $S_{\bfa,r}=0$ for $0< r<m \leq \lfloor n/2 \rfloor$ then
  \[
    S_{\bfa,m}
    =
    {n\choose m} - \sum\limits_{r=0}^{m-1} {n-2r \choose m-r} S_{\bfa,r}
    =
    {n\choose m} - {n\choose m} S_{\bfa,0}
    =
    0.
  \]
\end{proof}

\begin{theorem}
  Let $\bfa$ be a point in $(\kk^\times)^n$, and consider the element $x_{n\omega^\circ;c}^\bfa$ of~$\cA_{dp}(c)$.
  \begin{enumerate}
    \item
      \label{hyp:1}
      If $e_\ell(\bfa)=0$ whenever $\ell \in\big[1,\lfloor n/2\rfloor\big]$, then $x_{n\omega^\circ;c}^\bfa$ is equal to the greedy basis element $x_{n\omega^\circ;c}^{gr}$.
    \item
      \label{hyp:2}
      If $e_\ell(\bfa)=1$ whenever $\ell\in[1,n]$, then $x_{n\omega^\circ;c}^\bfa$ is equal to the triangular basis element $x_{n\omega^\circ;c}^{tr}$.
    \item
      \label{hyp:3}
      If $e_\ell(\bfa)=1/\ell!$ whenever $\ell \in[1,n]$, then $x_{n\omega^\circ;c}^\bfa$ is equal to the generic basis element $x_{n\omega^\circ;c}^{ge}$.
  \end{enumerate}
\end{theorem}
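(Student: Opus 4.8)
The plan is to prove all three parts in parallel by comparing, term by term, the coefficients $d_\bfa(m,k)$ produced by Proposition~\ref{prop:imaginary minor} with the coefficients appearing in the explicit formulas for $x_{n\omega^\circ;c}^{gr}$, $x_{n\omega^\circ;c}^{tr}$, and $x_{n\omega^\circ;c}^{ge}$. Since Proposition~\ref{prop:imaginary minor} already identifies $x_{n\omega^\circ;c}^{\bfa}$ with the element of $\cA_{dp}(c)$ carrying the coefficients $d_\bfa(m,k)$, this comparison is all that is needed. Part~\eqref{hyp:3} is immediate: by Lemma~\ref{le:a sums3} the hypothesis forces $S_{\bfa,0}=1$ and $S_{\bfa,r}=0$ for $r\ge1$, so in $d_\bfa(m,k)=\sum_r\binom{m-r}{k}\binom{n-2r}{m-r}S_{\bfa,r}$ only the $r=0$ summand survives, giving $d_\bfa(m,k)=\binom mk\binom nm$, which is precisely the coefficient in the defining formula for $x_{n\omega^\circ;c}^{ge}$.

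For parts~\eqref{hyp:1} and~\eqref{hyp:2} the common engine is the binomial identity
\begin{equation}
  \label{eq:plankey}
  \sum_{r\ge0}(-1)^r\binom{n-r}{r}\binom{n-2r}{m-r}\binom{m-r}{k}=\binom mk\binom{n-m+k}{n-m},\qquad 0\le k\le m\le n,
\end{equation}
which I would derive from the elementary rewrites $\binom{n-r}{r}\binom{n-2r}{m-r}=\binom{n-r}{m}\binom mr$ and $\binom mr\binom{m-r}{k}=\binom mk\binom{m-k}{r}$, so that the left-hand side becomes $\binom mk\sum_r(-1)^r\binom{m-k}{r}\binom{n-r}{n-m-r}$, followed by Lemma~\ref{le:binomial identity} applied with $b=m-k$, $a=n$, $c=n-m$. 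Granting~\eqref{eq:plankey}, part~\eqref{hyp:2} follows at once: Lemma~\ref{le:a sums2} gives $S_{\bfa,r}=(-1)^r\binom{n-r}{r}$ for all $r\ge0$, and substituting this into the formula for $d_\bfa(m,k)$ reproduces the left-hand side of~\eqref{eq:plankey}, whence $d_\bfa(m,k)=\binom mk\binom{n-m+k}{n-m}$, matching the formula for $x_{n\omega^\circ;c}^{tr}$.

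For part~\eqref{hyp:1}, Lemma~\ref{le:a sums1} gives $S_{\bfa,r}=(-1)^r\tfrac{n}{n-r}\binom{n-r}{r}$; I would write $\tfrac{n}{n-r}\binom{n-r}{r}=\binom{n-r}{r}+\binom{n-r-1}{r-1}$ to split $d_\bfa(m,k)$ into two sums. The first is the left-hand side of~\eqref{eq:plankey}, equal to $\binom mk\binom{n-m+k}{n-m}$; reindexing $r\mapsto r+1$ in the second identifies it with $-1$ times the left-hand side of~\eqref{eq:plankey} evaluated at the parameters $(n-2,m-1,k)$, that is, with $-\binom{m-1}{k}\binom{n-m+k-1}{n-m-1}$. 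A short computation using Pascal's rule and $\binom{n-m+k-1}{n-m-1}=\tfrac{n-m}{k}\binom{n-m+k-1}{n-m}$ then collapses the difference to $\tfrac nm\binom mk\binom{n-m+k-1}{n-m}$, which coincides with the coefficient $\tfrac nk\binom{m-1}{k-1}\binom{n-m+k-1}{n-m}$ appearing in the formula for $x_{n\omega^\circ;c}^{gr}$. The remaining (non-generic-position) summands of that formula are verified directly: the $m=0$ coefficient is $d_\bfa(0,0)=1$, matching the term $x_{\omega_2;c}^{2n}$, while $d_\bfa(n,0)=1$ and $d_\bfa(m,0)=0$ for $0<m<n$ follow from $d_\bfa(m,0)=e_m(\bfa)e_{n-m}(\bfa)/e_n(\bfa)$ together with the hypothesis $e_\ell(\bfa)=0$ for $\ell\in\big[1,\lfloor n/2\rfloor\big]$.

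I expect the main obstacle to be purely organizational, concentrated in part~\eqref{hyp:1}: one must confirm that the reduction to~\eqref{eq:plankey} at the shifted parameters $(n-2,m-1,k)$ is legitimate, or else holds trivially because the relevant binomial coefficients vanish, precisely in the edge cases $k=m$ and $m\in\{0,n\}$, where the closed form $\tfrac nm\binom mk\binom{n-m+k-1}{n-m}$ either degenerates or must be matched against a separately listed summand of $x_{n\omega^\circ;c}^{gr}$. Once~\eqref{eq:plankey} and these edge cases are settled, the three identifications follow by inspection.
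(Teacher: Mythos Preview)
Your argument is correct. Parts~\eqref{hyp:2} and~\eqref{hyp:3} are essentially identical to the paper's: the paper performs the same two binomial rewrites you use (combined into a single factorial identity) and then applies Lemma~\ref{le:binomial identity} with the same parameters $a=n$, $b=m-k$, $c=n-m$ for part~\eqref{hyp:2}, and simply invokes Lemma~\ref{le:a sums3} for part~\eqref{hyp:3}.

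Part~\eqref{hyp:1} is where your route genuinely differs. The paper does \emph{not} split $\tfrac{n}{n-r}\binom{n-r}{r}$ into two pieces. Instead, for $k>0$ it observes the single factorial identity
\[
  \frac{n}{n-r}\binom{m-r}{k}\binom{n-2r}{m-r}\binom{n-r}{r}
  =\frac{n}{k}\binom{m-1}{k-1}\binom{m-k}{r}\binom{n-r-1}{n-m-r},
\]
factors out $\tfrac{n}{k}\binom{m-1}{k-1}$, and applies Lemma~\ref{le:binomial identity} once with $a=n-1$, $b=m-k$, $c=n-m$ to obtain $\binom{n-m+k-1}{n-m}$ directly. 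Your approach recycles your identity~\eqref{eq:plankey} twice (at $(n,m,k)$ and at the shifted $(n-2,m-1,k)$) and then recombines via Pascal's rule. Both are valid; the paper's route is shorter and entirely sidesteps the edge-case analysis at $k=m$ and $m=n$ that you correctly anticipate needing, while your route has the advantage of deriving everything from a single master identity. The treatment of the $k=0$ coefficients via $d_\bfa(m,0)=e_m(\bfa)e_{n-m}(\bfa)/e_n(\bfa)$ is the same in both.
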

\begin{proof}
  Under the hypotheses in \eqref{hyp:1}, the coefficients $d_\bfa(m,0)$ equal $1$ if $m$ is $0$ or $n$ and vanish otherwise; these account for the two terms outside of the summation sign in the expression of $x_{n\omega^\circ;c}^{gr}$.
  Assuming now that $k>0$, Proposition~\ref{prop:imaginary minor} and Lemma~\ref{le:a sums1} combine to give
  \[
    d_\bfa(m,k)=\sum\limits_{r=0}^m (-1)^r \frac{n}{n-r}{m-r\choose k} {n-2r\choose m-r} {n-r\choose r}.
  \]
  But observe that we have the identity
  \begin{align*}
    \frac{n}{n-r}{m-r\choose k} {n-2r\choose m-r} {n-r\choose n-2r}
    & =
    \frac{n(n-r)!}{(n-r)k!(m-r-k)!(n-m-r)!r!}
    \\
    & =
    \frac{n}{k} {m-1\choose k-1} {m-k\choose r} {n-r-1\choose n-m-r}
  \end{align*}
  and so
  \[
    d_\bfa(m,k)
    =
    \frac{n}{k} {m-1\choose k-1} \sum\limits_{r=0}^m (-1)^r {m-k\choose r} {n-r-1\choose n-m-r}.
  \]
  Thus we may apply Lemma~\ref{le:binomial identity} with $a=n-1$, $b=m-k$, and $c=n-m$ to get
  \[
    d_\bfa(m,k)=\frac{n}{k} {m-1\choose k-1} {n-m+k-1\choose n-m}.
  \]
  This is exactly the coefficient in the greedy basis element and we have established \eqref{hyp:1}.

  Similarly, assuming \eqref{hyp:2}, we may combine Proposition~\ref{prop:imaginary minor} and Lemma~\ref{le:a sums2} to get
  \[
    d_\bfa(m,k)
    =
    \sum\limits_{r=0}^m (-1)^r {m-r\choose k} {n-2r\choose m-r} {n-r\choose r}.
  \]
  By the identity
  \[
    {m-r\choose k} {n-2r\choose m-r} {n-r\choose r}
    =
    \frac{(n-r)!}{k!(m-r-k)!(n-m-r)!r!}
    =
    {m\choose k}{m-k\choose r}{n-r\choose n-m-r},
  \]
  we may rewrite $d_\bfa(m,k)$ as
  \[
    d_\bfa(m,k)
    =
    {m\choose k}\sum\limits_{r=0}^m (-1)^r {m-k\choose r}{n-r\choose n-m-r}.
  \]
  Then Lemma~\ref{le:binomial identity} with $a=n$, $b=m-k$, and $c=n-m$ gives
  \[
    d_\bfa(m,k)={m\choose k} {n-m+k\choose n-m}.
  \]
  This is exactly the coefficient in the triangular basis element and we have established \eqref{hyp:2}.

  Finally, combining Proposition~\ref{prop:imaginary minor} and Lemma~\ref{le:a sums3} in case \eqref{hyp:3} we get
  \[
    d_\bfa(m,k)={m\choose k} {n\choose m}.
  \]
  This is exactly the coefficient in the generic basis element and \eqref{hyp:3} follows.
\end{proof}

\begin{remark}
  For any weight $\lambda$ of $\wh{G}$, a universal representation $V(\lambda)$ for which $\lambda$ is extremal was introduced in \cite{Kas94}.
  For the weight $\lambda = n \omega^\circ$, this can be identified with the $n$-th symmetric power of $V[u^{\pm 1}]$, where again $V$ is the vector representation of $SL_2$ \cite{CP01}.
  Let $\Delta_{V(n \omega^\circ),n \omega^\circ}$ be the minor defined by $v_{n\omega^\circ} = v \otimes \cdots \otimes v$ and suppose $\pi_{n\omega^\circ}$ annihilates non-constant elementary symmetric elements of $\widetilde{LSL}_2$-weight ${n\omega^\circ}$.
  Then one can show that $x_{n\omega^\circ;c}^{ge}$ also coincides with the restriction of~$\Delta_{V(n \omega^\circ),n \omega^\circ}$ to $\widetilde{LSL}_2^c$.
\end{remark}


\newcommand{\etalchar}[1]{$^{#1}$}
\providecommand{\bysame}{\leavevmode\hbox to3em{\hrulefill}\thinspace}
\providecommand{\MR}{\relax\ifhmode\unskip\space\fi MR }
\providecommand{\MRhref}[2]{%
  \href{http://www.ams.org/mathscinet-getitem?mr=#1}{#2}
}
\providecommand{\href}[2]{#2}


\end{document}